\documentclass{amsart}%
\usepackage{amssymb}
\usepackage{amsfonts}
\usepackage{amsmath}
\usepackage{graphicx}
\usepackage{subfigure}%
\setcounter{MaxMatrixCols}{30}
\providecommand{\U}[1]{\protect\rule{.1in}{.1in}}
\pdfoutput=1
\newtheorem{theorem}{Theorem}
\theoremstyle{plain}

\newtheorem{condition}{Condition}

\newtheorem{corollary}{Corollary}

\newtheorem{lemma}{Lemma}

\newtheorem{proposition}{Proposition}
\newtheorem{remark}{Remark}

\numberwithin{equation}{section}
\begin{document}
\title{Existence of travelling pulses in a neural model}
\author{S. P. Hastings.}

\begin{abstract}
In 1992 G. B. Ermentrout and J. B. McLeod published a landmark study of
travelling wavefronts for a differential-integral equation model of a neural
network. Since then a number of authors have extended the model by adding an
additional equation for a \textquotedblleft recovery
variable\textquotedblright, thus allowing the possibility of travelling pulse
type solutions. In a recent paper G. Faye gave perhaps the first rigorous
proof of the existence (and stability) of a travelling pulse solution for a
model of this type, treating a simplified version of equations originally
developed by Kilpatrick and Bressloff. The excitatory weight function $J$ used
in this work allowed the system to be reduced to a set of four coupled ODEs,
and a specific firing rate function $S$, with parameters, was considered. The
method of geometric singular perturbation was employed, together with
blow-ups. \ In this paper we extend Faye's results on existence by dropping
one of his key hypotheses, proving the existence of pulses at at least two
different speeds, and in a sense, allowing a wider range of the small
parameter in the problem. \ The proofs are classical, and self-contained aside
from standard ode material.

\end{abstract}
\dedicatory{This paper is dedicated to the memory of J. Bryce McLeod, 1929-2014, a dear
friend and inspiring collaborator. }\maketitle

\section{Introduction}

In this paper we consider the following system of equations:%
\begin{equation}
\left.
\begin{array}
[c]{c}%
\frac{\partial u\left(  x,t\right)  }{\partial t}=-u\left(  x,t\right)
+\int_{-\infty}^{\infty}J\left(  x-y\right)  q\left(  y,t\right)  S\left(
u\left(  y,t\right)  \right)  dy\\
\frac{1}{\varepsilon}\frac{\partial q\left(  x,t\right)  }{\partial
t}=1-q\left(  x,t\right)  -\beta q\left(  x,t\right)  S\left(  u\left(
x,t\right)  \right)
\end{array}
\right.  ,\ \label{-1}%
\end{equation}
where $J$ is a normalized exponential
\begin{equation}
J\left(  x\right)  =\frac{b}{2}e^{-b\left\vert x\right\vert } \label{-2}%
\end{equation}
and the \textquotedblleft firing rate\textquotedblright\ function $S$ is given
by
\begin{equation}
S\left(  u\right)  =\frac{1}{1+e^{\lambda\left(  \kappa-u\right)  }},
\label{-3}%
\end{equation}
for certain positive parameters $\varepsilon,$ $\lambda,$ $b,$ $\kappa,$ and
$\beta.$ The variable $u$ is the synaptic input current for a neural network
with synaptic depression, the effect of which is represented by the scaling
factor $q.$ These equations were proposed and studied by G. Faye in
\cite{faye}.

The Faye model is a simplified version of one first introduced by Kilpatrick
and Bressloff in \cite{kilpatrickbressloff}. These authors included a variable
and equation to allow for spike frequency adaptation. However they show by
numerical computation that adaptation has little effect on the resulting
waves. Faye dropped the adaptation equation and variable in
\cite{kilpatrickbressloff} to get his system (\ref{-1}). \ See \cite{faye} and
\cite{kilpatrickbressloff} for further information on the physical background
of (\ref{-1}).

\bigskip

In \cite{faye} the author proves two interesting results about the system
(\ref{-1}), namely the existence of a \textquotedblleft travelling
pulse\textquotedblright\ solution and the stability of this solution. A
travelling pulse solution of (\ref{-1}) is a non-constant solution $\left(
u,q\right)  $ of the form
\[
\left(  u\left(  x+ct\right)  ,q\left(  x+ct\right)  \right)
\]
such that both $\lim_{s\rightarrow\infty}\left(  u\left(  s\right)  ,q\left(
s\right)  \right)  $ and $\lim_{s\rightarrow-\infty}\left(  u\left(  s\right)
,q\left(  s\right)  \right)  $ exist and these limits are equal. In this paper
we are interested in the existence of values of $c$ for which (\ref{-1}) has
such a solution. As we describe briefly below, using (\ref{-2}) leads to a set
of four ode's in which $c$ is a parameter. To show that a travelling pulse
exists for some $c>0$, Faye uses the theory of geometric singular perturbation
initiated by Fenichel in \cite{fenichel} and extended by Jones and Kopell in
\cite{jones}. The \textquotedblleft blowup\textquotedblright\ method is also
employed \cite{dumortier}.

Here we extend the existence result in \cite{faye} in several ways. We show
that for sufficiently small $\varepsilon>0$ there are at least two travelling
pulses, hence a \textquotedblleft fast\textquotedblright\ pulse and a
\textquotedblleft slow\textquotedblright\ pulse, for speeds $c^{\ast}>c_{\ast
}>0$. Also, we remove an important hypothesis used in \cite{faye}, one which
can only be verified by numerical integration of a related ode
system.\ \ (This hypothesis is stated and discussed in Section
\ref{discussion}.) \ Our proof is for a general class of firing functions
which includes the specific $S$ for which Faye states his theorem.

Further, we use a method which allows, in some sense, a larger range of
$\varepsilon$ than seems possible with geometric perturbation. This will be
made precise in the statements of our theorems. We believe, based on our past
experience with a similar problem, that it is feasible to check existence
rigorously for particular positive values of $\varepsilon>0,$ using precise
numerical analysis based on interval arithmetic, but we have not carried out
such a check.\ This will be explained further in Section \ref{discussion}.

We now mention two well-known predecessors of the Kilpatrick-Bressloff and
Faye models. \ In 1992, Ermentrout and McLeod studied the equation
\begin{equation}
u_{t}=-u+\int_{-\infty}^{\infty}J\left(  x-y\right)  S\left(  u\left(
t,y\right)  \right)  dy. \label{emc}%
\end{equation}
As above, $S$ is positive, bounded, and increasing. \ Since there is no
feedback in the equation, (\ref{emc}) supports only traveling fronts, where
$u$ is monotone. In the landmark paper \cite{emc} Ermentrout and McLeod proved
the existence of fronts for a wide variety of symmetric positive weight
functions $J$ and firing rates $S.$ \ (Their work applied to a more general
equation )\ Subsequently, in \cite{pe}, Pinto and Ermentrout introduced the
needed negative feedback in order to get pulses. Their system is%
\begin{equation}
\left.
\begin{array}
[c]{c}%
\frac{\partial u\left(  x,t\right)  }{\partial t}=-u-v+\int_{-\infty}^{\infty
}J\left(  x-y\right)  S\left(  u\left(  t,y\right)  \right)  dy\\
\frac{1}{\varepsilon}\frac{\partial q\left(  x,t\right)  }{\partial
t}=u-\gamma v
\end{array}
\right.  . \label{pe}%
\end{equation}
They analyzed this system primarily for the case $S\left(  u\right)  =H\left(
u-\kappa\right)  $ where $H$ is the Heaviside function and $\kappa$ is a
constant representing a firing threshold. \ While some partial results have
been obtained recently by Scheel and Faye (see Section \ref{discussion}), we
are not aware of any existence proof for pulses which covers all reasonable
smooth functions $S.$ We discuss what we mean by \textquotedblleft
reasonable\textquotedblright\ in Section \ref{discussion}, where we also
indicate why our method does not appear to apply to this model, and why we
expect that (\ref{pe}) supports a richer family of bounded traveling waves
than exist for (\ref{-1}). \footnote{We thank the anonymous referee, who
helped improve the presentation and who asked several questions which should
have been answered in the original version.}\ \ 

\section{Statement of results}

Travelling pulse solutions of (\ref{-1}) with (\ref{-2}) are shown to satisfy
a system of ode's by letting $v\left(  s\right)  =\int_{-\infty}^{\infty}%
\frac{b}{2}e^{-b\left\vert s-\tau\right\vert }q\left(  \tau\right)  J\left(
u\left(  \tau\right)  \right)  d\tau$ and computing $w=v^{\prime}$ and
$w^{\prime}$.\footnote{It is not necessary to discuss Fourier transforms, as
is usually done here.} We find that
\begin{equation}
\left.
\begin{array}
[c]{c}%
u^{\prime}=\frac{v-u}{c}\\
v^{\prime}=w\\
w^{\prime}=b^{2}\left(  v-qS\left(  u\right)  \right) \\
q^{\prime}=\frac{\varepsilon}{c}\left(  1-q-\beta qS\left(  u\right)  \right)
.
\end{array}
\right.  \label{1}%
\end{equation}
We will denote solutions of this system by $p=\left(  u,v,w,q\right)  ,$ and
we look for values of $c$ for which there is a non-constant solution $p$ such
that $p\left(  \infty\right)  $ and $p\left(  -\infty\right)  $ both exist and
are equal. The orbit of such a solution of (\ref{1}) is called
\textquotedblleft homoclinic\textquotedblright. In the language of dynamical
systems, $\left(  u\left(  x+ct\right)  ,q\left(  x+ct\right)  \right)  $ is a
pulse solution of (\ref{-1}) if and only if the orbit of $p$ is homoclinic.

We make the following assumptions on $S$.

\begin{condition}
\label{c0} The function $S$ is positive, increasing, bounded, and has a
continuous first derivative $S^{\prime}.$
\end{condition}

\begin{condition}
\label{c0a}The function $h\left(  u\right)  =\frac{u}{S\left(  u\right)  }$
has one local maximum followed by one local minimum, and no other critical points.
\end{condition}

\begin{condition}
\label{c1}$S$ is such that the system (\ref{1}) has exactly one equilibrium
point, say $p_{0}=\left(  u_{0},u_{0},0,q_{0}\right)  $.
\end{condition}

\begin{condition}
\label{c2}The function $S$ is also such that the \textquotedblleft
fast\textquotedblright\ system%
\begin{equation}
\left.
\begin{array}
[c]{c}%
u^{\prime}=\frac{v-u}{c}\\
v^{\prime}=w\\
w^{\prime}=b^{2}\left(  v-q_{0}S\left(  u\right)  \right)
\end{array}
\right.  \label{2}%
\end{equation}
has three equilibrium points, $\left(  u_{0},u_{0},0\right)  ,$ $\left(
u_{m},u_{m},0\right)  $, and $\left(  u_{+},u_{+},0\right)  ,$ with
$u_{0}<u_{m}<u_{+}.$
\end{condition}

\begin{condition}
\label{c3}%
\[
\int_{u_{0}}^{u_{+}}\left(  q_{0}S\left(  u\right)  -u\right)  du>0.
\]

\end{condition}

For convenience we will assume that $0<S<1$ on $\left(  -\infty,\infty\right)
.$ Then Conditions \ref{c0}-\ref{c2} imply that $0<q_{0}<1,$ $u_{0}>0$, and
$u_{+}<1.$

We will denote solutions of (\ref{2}) by $r=\left(  u,v,w\right)  .$ The local
minimum of $h$ will be denoted by $u_{knee}.$ In \cite{faye} specific ranges
of $\kappa$ and $\lambda$ are given so that these conditions are satisfied by
the function given in (\ref{-3}). In Figure \ref{figurea} we show the graphs
of $h$, $\frac{1}{1+\beta S}$ (the $q$ nullcline), and $q=q_{0}$, when $S$ is
given by (\ref{-3}). We use the same parameter values as were chosen for
illustration in \cite{faye}.\footnote{$\lambda=20,\ \kappa=0.22,\beta
=5,\ b=4.5$}

\begin{figure}[h]
\includegraphics[height=1.5 in, width =2  in]{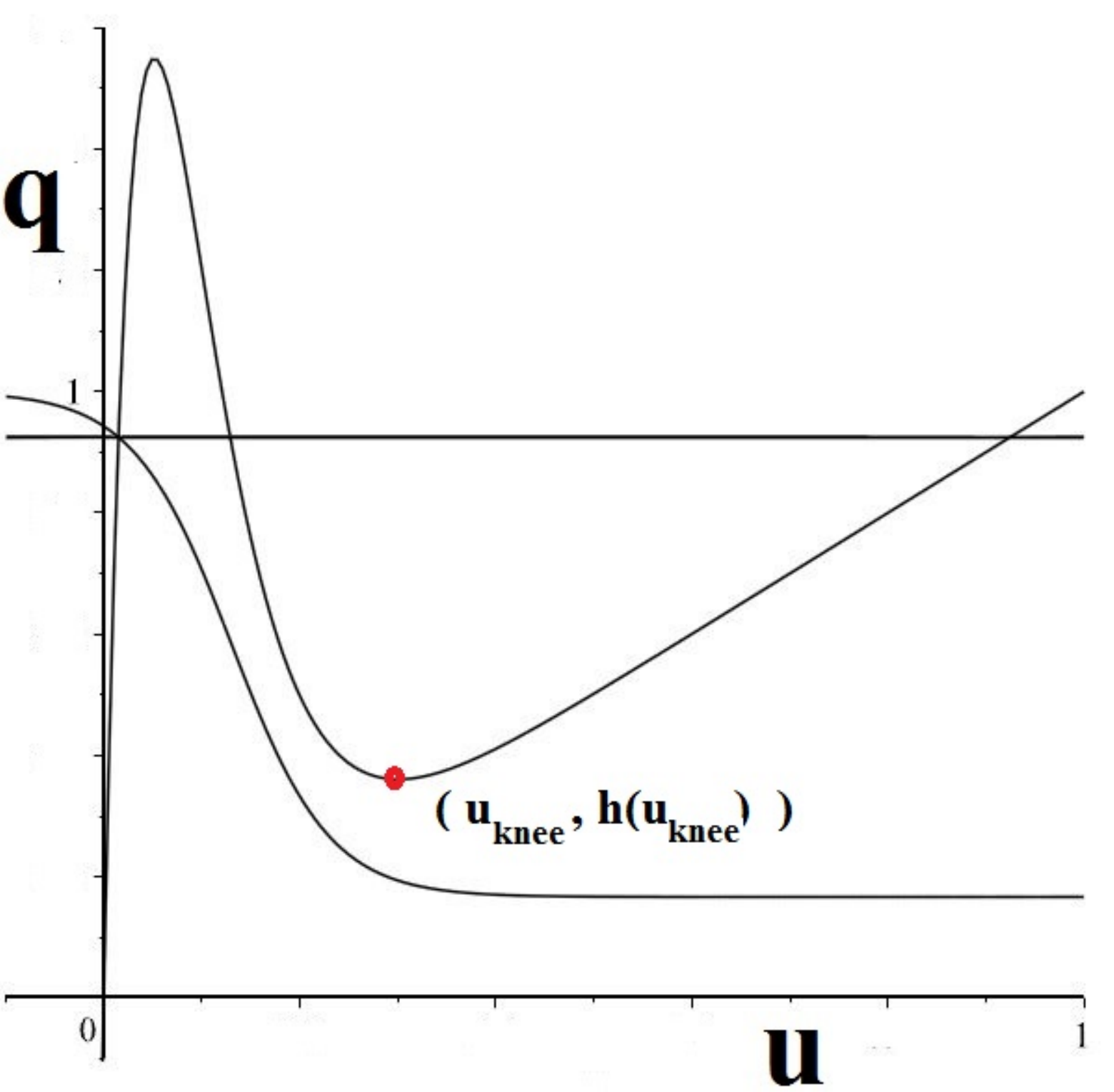}\caption{graphs of
$h$, $\frac{1}{1+\beta S}$ , and $q=q_{0}$ }%
\label{figurea}%
\end{figure}

We can now state our first main result.

\begin{theorem}
\label{thm1a}If Conditions \ref{c0}- \ref{c3} are satisfied, and $\varepsilon$
is positive and sufficiently small, then there are at least two positive
values of $c,$ say $c^{\ast}>c_{\ast},$ such that (\ref{1}) has a non-constant
solution $p$ satisfying
\[
\lim_{t\rightarrow-\infty}p\left(  t\right)  =\lim_{t\rightarrow\infty
}p\left(  t\right)  =p_{0}.
\]

\end{theorem}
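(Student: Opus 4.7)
\emph{Plan of proof.} My approach is a classical topological shooting in the wave speed $c$, in the style of Hastings's earlier work on singularly perturbed biological models. Fix $\varepsilon>0$ small. Linearizing (\ref{1}) at $p_{0}$, the slow block contributes one eigenvalue of size $O(\varepsilon)$ with negative real part, while the remaining three ``fast'' eigenvalues satisfy
\[
\lambda^{3}+\frac{1}{c}\lambda^{2}-b^{2}\lambda-\frac{b^{2}}{c}\bigl(1-q_{0}S'(u_{0})\bigr)=0.
\]
Condition \ref{c0a} places $u_{0}$ to the left of the local maximum of $h$, which yields $1-q_{0}S'(u_{0})>0$ and, by a sign analysis of the coefficients, forces exactly one positive real root together with a pair having negative real part. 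Hence $p_{0}$ is a hyperbolic saddle with a one-dimensional unstable manifold $W^{u}_{c,\varepsilon}(p_{0})$ and a three-dimensional stable manifold. Let $p_{c}(\cdot)$ denote the unique (up to orientation) forward trajectory on $W^{u}_{c,\varepsilon}(p_{0})$. The theorem reduces to showing that $p_{c}(t)\to p_{0}$ as $t\to+\infty$ for at least two values of $c>0$.

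The candidate speeds are identified through the singular limit $\varepsilon=0$. Freezing $q$ turns the fast subsystem (\ref{2}) into a three-dimensional bistable ODE of Ermentrout--McLeod type; Condition \ref{c3} is precisely the area/balance hypothesis that, by an EMC-style monotone-shooting argument, yields a unique positive front speed $c_{F}(q_{0})$ from the left branch to the right branch at $q=q_{0}$, along with back connections at unique positive speeds $c_{B}(q)$ for $q$ in a suitable subinterval below the Maxwell value. A formal singular pulse is built by concatenating the front at $q=q_{0}$, slow drift down the right branch on which $q$ decreases monotonically (since $1-q-\beta qS(u_{+}(q))<0$ there), a back jump at $q=q_{B}$ compatible with $c_{B}(q_{B})=c$, and slow return up the left branch. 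The key observation is that, for positive $\varepsilon$, the closure condition admits two distinct solutions in a compact $c$-window, one with a long right-branch sojourn (the ``slow'' pulse, with the back occurring near the fold at $u_{knee}$) and one with a short sojourn (the ``fast'' pulse), because the return to a neighbourhood of $p_{0}$ depends non-monotonically on $c$.

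To realize these candidates as honest homoclinics I define a scalar shooting function $\Phi(c,\varepsilon)$: fix a codimension-one transversal $\Sigma$ to $W^{s}_{c,\varepsilon}(p_{0})$ near $p_{0}$, and let $\Phi$ be a signed coordinate in $\Sigma$ of the first return of $p_{c}(\cdot)$ to $\Sigma$. The singular analysis supplies, for small $\varepsilon$, opposite signs of $\Phi(\cdot,\varepsilon)$ on three subintervals of a compact window in $c$-space, separated by two sign changes. Continuous dependence of solutions on $(c,\varepsilon)$ over bounded times, combined with explicit monotonicity and uniform-drift estimates on the slow segments, shows that these sign changes survive for $\varepsilon>0$ small, and two applications of the intermediate value theorem furnish zeros $c^{*}(\varepsilon)>c_{*}(\varepsilon)>0$ of $\Phi(\cdot,\varepsilon)$, corresponding to the two travelling pulses asserted by the theorem.

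The main obstacle is uniform control of the shooting function across the two ``knees'' of the slow manifold, where normal hyperbolicity is lost. In the geometric-perturbation framework one invokes a blow-up here, but the present approach instead requires direct ODE estimates, exploiting the strict monotonicity of $h$ away from the knees and the definite sign of $1-q-\beta qS$ on each slow branch, to show that each fast jump takes place within a uniformly bounded distance of the corresponding knee, and that trajectories on $W^{u}$ remain in a narrow tubular neighbourhood of the singular skeleton throughout the slow phases. Once these estimates are in place the singular sign structure of $\Phi(\cdot,0)$ transfers to small positive $\varepsilon$, completing the argument.
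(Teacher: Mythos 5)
Your proposal diverges from the paper's proof in a fundamental way, and the divergence introduces a gap that I don't see how to close. The paper does not define a scalar shooting function $\Phi$ on a transversal near $p_{0}$ and count sign changes. Instead it defines, for the fast pulse, a set $\Lambda$ of speeds characterized by qualitative behaviour of the unique solution $p_{\varepsilon,c}$ on the unstable manifold (first maximum of $u$ followed by a decrease past $u_{0}$, etc.), shows $c_{1}\in\Lambda$ and $(c_{0}^{*},\infty)\cap\Lambda=\emptyset$, sets $c^{*}=\sup\Lambda$, and then rules out, case by case via Lemma \ref{lem5a} and Proposition \ref{prop2}, every non-homoclinic behaviour of $p_{c^{*}}$. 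The slow pulse is obtained by an analogous but separate shooting in a different qualitative feature (the sign change of $q_{c}'$, via the sets $\Sigma$ and $\Sigma_{1}$), with Lemma \ref{lem10} supplying the lower barrier. This is not a ``sign-change of a real-valued shooting function'' argument, and the distinction matters, as explained next.

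The concrete gap in your plan is the assertion that the singular analysis supplies, for small $\varepsilon$, a sign pattern of $\Phi$ with two zeros inside a \emph{compact} $c$-window. That cannot be right: the paper proves (Theorem \ref{thm3}) that $c_{*}\to 0$ as $\varepsilon\to 0$, while $c^{*}\to c_{0}^{*}$. So for small $\varepsilon$ the slow speed leaves every compact window bounded away from zero, and the singular skeleton you propose to track (front at $q_{0}$, slow right-branch drift, back, slow return) does \emph{not} have a second positive-speed closure in the limit $\varepsilon=0$; the closure condition selects the single speed $c_{0}^{*}$. The paper is explicit about why this fails: ``the concept of `front' $\ldots$ breaks down for small $c$. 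The slow homoclinic orbit is not close, even up to the first zero of $u_{c}'$, to the front found when $\varepsilon=0$.'' Consequently, the tubular-neighbourhood/tracking argument you outline for the slow phases does not apply to the slow pulse, and your two IVT applications rest on a singular sign structure that isn't there. You also acknowledge but do not carry out the estimates near the two knees, which is exactly the step where both geometric perturbation (via blow-up) and the paper's own argument (via the sets $\Lambda$, $\Sigma$, $\Sigma_{1}$ and Lemma \ref{lem5a}) do nontrivial work; flagging it as the ``main obstacle'' without an argument leaves the proof incomplete even for the fast pulse. If you want to retain a shooting-function flavour, the viable version is closer to the paper's: replace the scalar $\Phi$ by an \emph{event-based} characterization (the qualitative shape of $(u_{c},q_{c})$) so that the slow pulse can be reached as an infimum of a set whose complement near $c=0$ is controlled by a direct estimate like Lemma \ref{lem10}, rather than by a singular sign transfer.
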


In order to state our remaining theorems it is convenient first to give some
basic information about the fast system, (\ref{2}). We state this information
as a pair of lemmas, which will be used in proving our Theorems. Their proofs
are given in the Appendix.

\begin{lemma}
\label{lem1}If Conditions \ref{c2} and \ref{c3} are satisfied, then for each
$c>0$ the equilibrium point $\left(  u_{0},u_{0},0\right)  $ of (\ref{2}) is a
saddle point, with a one dimensional unstable manifold $\mathcal{U}_{0,c}$ and
a two dimensional stable manifold $\mathcal{S}_{0,c}.$ There is, for each
$c>0,$ a unique solution $r_{c}=\left(  u_{0,c},v_{0,c},w_{0,c}\right)  $ of
(\ref{2}) with $r_{c}\left(  t\right)  \in\mathcal{U}_{0,c}$ for all $t$ and
satisfying the conditions
\begin{equation}
\left.
\begin{array}
[c]{c}%
u_{0,c}\left(  0\right)  =u_{m}\\
w_{0,c}>0\text{ on }(-\infty,0].
\end{array}
\right.  \label{3}%
\end{equation}
Further, there is a unique $c=c_{0}^{\ast}>0$ such that $w_{0,c_{0}^{\ast}}>0$
on $R$ and
\[
\lim_{t\rightarrow\infty}r_{c_{0}^{\ast}}\left(  t\right)  =\left(
u_{+},u_{+},0\right)  .
\]

\end{lemma}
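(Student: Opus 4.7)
The plan is to prove the lemma in three stages: a linearization at $p_0$, construction of the orbit $r_c$ on the one-dimensional unstable manifold with the qualitative properties in (\ref{3}), and a shooting argument in $c$ to single out $c_0^{\ast}$.

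For the linearization, the Jacobian of (\ref{2}) at $p_0 = (u_0, u_0, 0)$ has characteristic polynomial
\[
P(\mu) = c\mu^3 + \mu^2 - cb^2 \mu + b^2(q_0 S'(u_0) - 1).
\]
Conditions \ref{c0a} and \ref{c2} together place $u_0$ to the left of the left knee of $h(u) = u/S(u)$, so $h'(u_0) > 0$, equivalently $q_0 S'(u_0) < 1$ (since $h(u_0) = q_0$); hence $P(0) < 0$, and inspecting $P'$ shows $P$ has exactly one positive real root $\mu_1$. By Vieta the remaining two roots have negative sum and positive product, so both have negative real part, giving the claimed saddle structure.

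Next I would follow the branch of $\mathcal{U}_{0,c}$ tangent to the eigenvector $(1, 1+c\mu_1, \mu_1(1+c\mu_1))$, whose entries are all positive, so that close to $p_0$ one has $u > u_0$, $v > u$, and $w > 0$. Checking the vector field on the boundary of $\Omega = \{u_0 < u < u_m,\ v > u,\ w > 0\}$ (on $\{v = u\}$: $v' = w > 0$; on $\{w = 0\} \cap \Omega$: $w' = b^2(v - q_0 S(u)) > 0$ since $v > u > q_0 S(u)$ on $(u_0, u_m)$; on $\{u = u_0\}$: $u' > 0$) shows $\Omega$ is positively invariant, so $u$ is strictly increasing along $r_c$ in $\Omega$. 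To rule out $u \to u_m$ asymptotically, so that $r_c$ actually crosses $\{u = u_m\}$ in finite time, I introduce the Lyapunov quantity
\[
E = c u' u'' + \tfrac12 (u')^2 - V(u), \qquad V(u) = b^2 \int_{u_0}^u (\xi - q_0 S(\xi))\,d\xi,
\]
which, via the scalar ODE $c u''' + u'' - V'(u) = c b^2 u'$ equivalent to (\ref{2}), satisfies $\dot E = c((u'')^2 + b^2 (u')^2) \ge 0$. Since $E(p_0) = 0$, one has $E \ge 0$ on the forward orbit from $\mathcal{U}_{0,c}$, whereas $E(p_m) = -V(u_m) < 0$ (as $V$ is strictly increasing on $[u_0, u_m]$), a contradiction. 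Hence $r_c$ crosses $\{u = u_m\}$ in finite time; normalize $t = 0$ there. Uniqueness of $r_c$ is immediate from $\dim \mathcal{U}_{0,c} = 1$ together with this normalization.

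For $c_0^{\ast}$, I would partition $(0,\infty)$ into the open disjoint sets $A$ (undershoot: $w_{0,c}$ vanishes at some $t>0$ while $u_m < u_{0,c} < u_+$) and $B$ (overshoot: $u_{0,c}$ first reaches $u_+$ at some $t > 0$ with $w_{0,c} > 0$ throughout). For small $c$ the system is a singular perturbation of the planar Hamiltonian $u'' = V'(u)$, whose unstable orbit from $u_0$ is a homoclinic loop turning around at the unique $u^{\ast} \in (u_m, u_+)$ with $V(u^{\ast}) = 0$; since $w = u' + c u''$ is strictly negative at the Hamiltonian turning point ($u' = 0$, $u'' < 0$), continuous dependence forces $w_{0,c}$ to change sign beforehand, placing small $c$ in $A$. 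For $c$ large, a rescaling shows $u$ is nearly frozen while $(v,w)$ grows exponentially along the unstable mode of the frozen-$u$ planar subsystem, carrying $u$ past $u_+$ while $w$ remains positive, placing large $c$ in $B$. Then $(0,\infty) \setminus (A \cup B)$ is non-empty by connectedness, and for any $c_0^{\ast}$ in the complement, the orbit $r_{c_0^{\ast}}$ is confined to $\{u_m \le u \le u_+,\ v \ge u,\ w \ge 0\}$, with $u$ and $v$ monotone and bounded; a standard $\omega$-limit argument then yields $r_{c_0^{\ast}}(t) \to p_+$, the only equilibrium in this slab. The main obstacle I anticipate is the uniqueness of $c_0^{\ast}$; I would attempt it by establishing strict monotonicity in $c$ of $w_{0,c}$ at the first passage time to a fixed level $u_{\ast} \in (u_m, u_+)$, via a Gronwall-type comparison on the variational equation in the parameter $c$.
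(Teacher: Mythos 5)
Your overall architecture mirrors the paper's: linearization to get the $1$-$2$ saddle splitting, tracking $\mathcal{U}_{0,c}^{+}$ to $u=u_m$, and then a shooting argument in $c$ using open, disjoint, nonempty ``undershoot'' and ``overshoot'' sets together with connectedness. The linearization and eigenvector calculation agree with the paper's. For the finite-time crossing of $\{u=u_m\}$ you use the Lyapunov quantity $E=cu'u''+\tfrac12(u')^2-V(u)$ with $\dot E=c\bigl((u'')^2+b^2(u')^2\bigr)$; this is a correct alternative to the paper's more elementary observation that, once $w$ is bounded below away from zero, $v$ grows linearly and therefore so must $u$, which forces finite-time exit. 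Your $\omega$-limit argument identifying the limit as $(u_+,u_+,0)$ is sound, though in passing you should argue, as the paper does, that once $w>0$ one has $v>u$ and hence $u'>0$, so $u$ is strictly increasing past $u_m$ and the $\omega$-limit cannot be $(u_m,u_m,0)$.

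The two genuine gaps are in the shooting argument. First, the small-$c$ placement into the undershoot set $A$ is asserted via ``singular perturbation of the planar Hamiltonian $u''=V'(u)$'' together with ``continuous dependence,'' but setting $c=0$ in $cu'''+u''-V'(u)=cb^2u'$ kills the highest derivative, so convergence of $\mathcal U_{0,c}^{+}$ to the Hamiltonian homoclinic loop is exactly what has to be \emph{proved}, not invoked. The paper does this by exploiting the uniform bound $0<v_c-u_c<c\bar w$ (Lemma~\ref{lem15}), which makes $u_c\approx v_c$ for small $c$, and then trapping $(v_c,w_c)$ between the homoclinic orbits of the explicitly perturbed planar systems (\ref{10}), (\ref{11}), (\ref{12a}). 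You would need something of this sort to make the ``$w$ must change sign before the turning point'' claim rigorous. Second, and more seriously, you do not prove uniqueness of $c_0^{\ast}$; you state it is the main obstacle and only propose to attack it via a variational/Gronwall comparison. Since uniqueness is part of the lemma's conclusion, this leaves the proof incomplete. The paper's mechanism is the monotone comparison Lemma~\ref{lem12}: writing $u,w$ as functions of $v$ on $\{w>0\}$, showing $U_{d_2}<U_{d_1}$ and $W_{d_2}>W_{d_1}$ for $d_2>d_1$, which uses the monotonicity $\lambda_1(d_2)>\lambda_1(d_1)$ of the positive eigenvalue to seed the comparison near $r_0$ and the monotonicity of $S$ to propagate it. This same comparison is then also what makes the small-$c$ analysis rigorous, so filling in the uniqueness argument would supply most of what is missing in the other gap as well.
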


In other words, the branch $\mathcal{U}_{0,c_{0}^{\ast}}^{+}$ of
$\ \mathcal{U}_{0,c_{0}^{\ast}}$ pointing into the positive octant
$u>u_{0},v>u_{0},w>0$ \ is a heteroclinic orbit connecting $\left(
u_{0},u_{0},0\right)  $ to $\left(  u_{+},u_{+},0\right)  .$ Also,
$~w_{0,c_{0}^{\ast}}>0$ on $\left(  -\infty,\infty\right)  $, which implies
that $v_{0,c_{0}^{\ast}}^{\prime}>0$ and $u_{0,c_{0}^{\ast}}^{\prime}>0$.
\ This solution is called a \textquotedblleft front\textquotedblright\ for
(\ref{2}). A front for (\ref{2}) can be characterized as a solution of this
equation which exists on $\left(  -\infty,\infty\right)  ,$ is nonconstant and
bounded, and satisfies $u_{c}^{\prime}>0$ on $\left(  -\infty,\infty\right)
.$

\begin{lemma}
\label{lem2}If $c>c_{0}^{\ast}$ then $w_{0,c}>0$ on $R$, and
\[
\lim_{t\rightarrow\infty}u_{0,c}\left(  t\right)  =\lim_{t\rightarrow\infty
}v_{0,c}\left(  t\right)  =\infty.
\]
If $0<c<c_{0}^{\ast}$ then $w_{0,c}$ is initially positive and has a unique
zero. Also, $u_{0,c}^{\prime}$ has a unique zero, and
\[
\lim_{t\rightarrow\infty}u_{0,c}\left(  t\right)  =\lim_{t\rightarrow\infty
}v_{0,c}\left(  t\right)  =-\infty.
\]
If $c\in\left(  0,c_{0}^{\ast}\right)  $ and $t_{1}\left(  c\right)  $ is the
zero of $u_{0,c}^{\prime}$, where $u_{0,c}$ is a maximum, then $u_{0,c}%
^{\prime\prime}\left(  t_{1}\left(  c\right)  \right)  <0$ and $\lim
_{c\rightarrow c_{0}^{\ast-}}r_{c}\left(  t_{1}\left(  c\right)  \right)
=\left(  u_{+},u_{+},0\right)  .$

Suppose finally that for some $c_{1}\in\left(  0,c_{0}^{\ast}\right)  ,$
$w_{0,c_{1}}>0$ on an interval $\left(  -\infty,T\left(  c_{1}\right)
\right)  ,$ $w_{0,c_{1}}\left(  T\left(  c_{1}\right)  \right)  =0,$ and
\[
v_{0,c_{1}}\left(  T\left(  c_{1}\right)  \right)  >q_{0}S\left(
u_{knee}\right)  .
\]
Then for any $c\in\lbrack c_{1},c_{0}^{\ast})$,
\[
u_{0,c}\left(  t_{1}\left(  c\right)  \right)  >u_{knee}.
\]

\end{lemma}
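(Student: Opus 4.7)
My plan is to extend the shooting framework of Lemma \ref{lem1}, combining continuous dependence of $r_c$ on $c$ with sign tracking along the orbit.

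For Parts A and B, I would define the open sets
\[
U = \{c>0 : w_{0,c}(t)>0 \text{ on } \mathbb{R} \text{ and } u_{0,c}(t)\to+\infty\}, \qquad L = \{c>0 : w_{0,c} \text{ has a zero}\}.
\]
Disjointness and openness follow from continuous dependence together with the observation that once the orbit enters a "trapping" region (for $U$, with $v$ bounded below by a value exceeding the bound on $q_0 S$; for $L$, with $w$ strictly negative and $v<q_0 S(u)$) it stays there. To verify $U\cup L\cup\{c_0^*\}=(0,\infty)$, note that if $c\notin L$ then $v_{0,c}$ is strictly increasing; it either diverges (and then $u_{0,c}$ follows via $u'=(v-u)/c$, so $c\in U$) or it converges, forcing $r_c$ onto an equilibrium which by the sign constraints must be $(u_+,u_+,0)$, whence $c=c_0^*$ by Lemma \ref{lem1}. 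A direct scaling estimate ($c$ large: $u$ is nearly frozen and the $(v,w)$-subsystem is a linear saddle pushing $v,w\to\infty$ along its unstable direction) shows $U\neq\emptyset$, and connectedness with the uniqueness in Lemma \ref{lem1} then gives $(c_0^*,\infty)\subset U$ and $(0,c_0^*)\subset L$.

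Uniqueness of $T(c)$ and $t_1(c)$ for $c\in L$ follows by sign tracking. Just before $T(c)$, $w>0$ and $w'\le 0$, so $v_{0,c}(T(c))\le q_0 S(u_{0,c}(T(c)))$. For $t>T(c)$, $w<0$ and $v$ strictly decreases; there is a first $t_1(c)>T(c)$ at which $v=u$, and at this point $u''=(w-u')/c=w/c<0$, so $u$ has a strict maximum. After $t_1(c)$, $v<u$, $w<0$, and $v<q_0 S(u)$, making $u',v',w'$ all strictly negative forever, which precludes further zeros and drives $u_{0,c},v_{0,c}\to-\infty$. Part C is then essentially immediate: $r_c\to r_{c_0^*}$ uniformly on arbitrarily long intervals by continuous dependence, and since $r_{c_0^*}(t)\to(u_+,u_+,0)$ we get $t_1(c)\to\infty$ and $r_c(t_1(c))\to(u_+,u_+,0)$ as $c\to c_0^{*-}$, with $u_{0,c}''(t_1(c))<0$ already noted.

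Part D is the most delicate. The key observation from the preceding sign analysis is the nullcline inequality $v_{0,c}(T(c))\le q_0 S(u_{0,c}(T(c)))$. Since $S$ is increasing, the hypothesis $v_{0,c_1}(T(c_1))>q_0 S(u_{knee})$ upgrades to $u_{0,c_1}(T(c_1))>u_{knee}$, and hence $u_{0,c_1}(t_1(c_1))\ge u_{0,c_1}(T(c_1))>u_{knee}$, handling $c=c_1$. To extend to all $c\in[c_1,c_0^*)$, I would argue by contradiction. If some $c^\star\in(c_1,c_0^*)$ had $u_{0,c^\star}(t_1(c^\star))\le u_{knee}$, then $u_{0,c^\star}(T(c^\star))\le u_{knee}$ and so $v_{0,c^\star}(T(c^\star))\le q_0 S(u_{knee})$. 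Combined with $v_{0,c_1}(T(c_1))>q_0 S(u_{knee})$ and Part C's $v_{0,c}(T(c))\to u_+ > q_0 S(u_{knee})$ as $c\to c_0^{*-}$, the intermediate value theorem applied to the continuous map $c\mapsto v_{0,c}(T(c))$ produces parameters in $(c_1,c^\star)$ and $(c^\star,c_0^*)$ at which $v_{0,c}(T(c))= q_0 S(u_{knee})$; a transversality/uniqueness analysis at these parameters, exploiting the generically strict inequality $v<q_0 S(u)$ at $T(c)$, should yield the contradiction.

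The main obstacle is the final step of Part D: ruling out the non-monotone scenario for $c\mapsto v_{0,c}(T(c))$ without a ready-made comparison principle for the fast system (\ref{2}). I expect this requires either a sliding/monotonicity argument showing $\psi(c):=v_{0,c}(T(c))$ is in fact non-decreasing on $(0,c_0^*)$, or a finer local analysis of $r_c$ near $(u_+,u_+,0)$ that forces $\{c:\psi(c)\le q_0 S(u_{knee})\}$ to be disconnected from $c_0^{*-}$.
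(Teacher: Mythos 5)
Your plan for Parts A--C is broadly aligned with the paper's shooting framework, but there are two genuine gaps, one of which you candidly flag yourself. First, in Parts A--B your open-set argument requires that \emph{both} $U$ and $L$ be nonempty before connectedness plus the uniqueness of $c_0^*$ can force $(c_0^*,\infty)\subset U$ and $(0,c_0^*)\subset L$. You show $U\ne\emptyset$ by a large-$c$ scaling, but you never show $L\ne\emptyset$, and this is precisely where Condition \ref{c3} enters. The paper establishes $L\ne\emptyset$ by reducing to the two-dimensional Hamiltonian system (\ref{10}) and its shifted variants (\ref{11}), (\ref{12a}): Lemma \ref{lem15} gives $|v-u|<c\bar w$ for $w$ small, so for small $c$ the orbit of (\ref{2}) is squeezed between the homoclinic loops of (\ref{11}) and (\ref{12a}), and the area condition of Condition \ref{c3} then forces $w$ to cross zero and the orbit to escape downward. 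Nothing in your draft replaces this step, and without it your partition argument cannot rule out $(0,c_0^*)\subset U$.

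Second, and more centrally, the obstacle you identify in Part D is exactly the missing comparison lemma, and it cannot be dodged by IVT or a "finer local analysis." The paper proves the monotonicity you conjecture (that $\psi(c)=v_{0,c}(T(c))$ is increasing) by parametrizing the unstable branch by $v$ instead of $t$: writing $u=U_c(v)$, $w=W_c(v)$ on the interval where $w>0$, one gets the autonomous system (\ref{a01}), and Lemma \ref{lem12} shows $U_{d_2}<U_{d_1}$, $W_{d_2}>W_{d_1}$ for $d_2>d_1$ by (i) comparing the positive eigenvalue $\lambda_1(c)$ of the linearization (which is increasing in $c$ because $\lambda_1<b$, checked via the characteristic polynomial) to get the strict ordering near $(u_0,u_0,0)$, and (ii) propagating the ordering by a differential inequality that crucially uses $S$ increasing. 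From $W_{d_2}>W_{d_1}$ one reads off $I_{d_1}\subset I_{d_2}$ and hence $\psi(d_2)\ge\psi(d_1)$. With that in hand Part D is a two-line contradiction: if $u_{0,c}(T(c))\le u_{knee}$ then $v_{0,c}(T(c))\ge v_{0,c_1}(T(c_1))>q_0S(u_{knee})\ge q_0S(u_{0,c}(T(c)))$, so $w'(T(c))>0$, contradicting that $T(c)$ is a first zero of $w$. So your intuition about which tool is needed is exactly right; the comparison lemma is the core of the paper's proof and is not optional. A smaller soft spot: in Part B your claimed trapping region $\{v<u,\,w<0,\,v<q_0S(u)\}$ is not obviously forward-invariant at the face $v=q_0S(u)$, since there $d(v-q_0S(u))/dt=w-q_0S'(u)u'$ has competing signs; the paper avoids this by again comparing with the planar systems and invoking Lemma \ref{lem14} once $|w|$ exceeds $\bar w$.
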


\begin{remark}
\label{rem1}We conjecture that the condition
\[
u_{0,c_{1}}\left(  T\left(  c_{1}\right)  \right)  >u_{knee}%
\]
would imply the same conclusion, but we have not been able to prove this.
\bigskip
\end{remark}

The positive number $c_{0}^{\ast}$ defined in Lemma \ref{lem1} plays an
important role throughout this paper.

\begin{theorem}
\label{thm1}Suppose that Conditions \ref{c0}- \ref{c3} are satisfied. Suppose
also that there is a $c_{1}\in\left(  0,c_{0}^{\ast}\right)  $, such that if
$T\left(  c_{1}\right)  $ is the unique zero of $w_{0,c_{1}}$ (which exists by
Lemma \ref{lem2}), then
\begin{equation}
v_{0,c_{1}}\left(  T\left(  c_{1}\right)  \right)  >q_{0}S\left(
u_{knee}\right)  . \label{y1}%
\end{equation}

Assume as well that for some $\varepsilon>0$ there is a solution
$p_{\varepsilon,c_{1}}=\left(  u_{\varepsilon,c_{1}},v_{\varepsilon,c_{1}%
},w_{\varepsilon,c_{1}},q_{\varepsilon,c_{1}}\right)  $ of (\ref{1}) with
$c=c_{1}$ which has the following properties:%
\begin{equation}
\lim_{t\rightarrow-\infty}p_{\varepsilon,c_{1}}\left(  t\right)  =p_{0}
\tag{i}\label{i}%
\end{equation}%
\begin{equation}
\left\{
\begin{array}
[c]{c}%
u_{\varepsilon,c_{1}}^{\prime}>0\text{ on some interval \thinspace}%
(-\infty,t_{1})\text{ and }u_{\varepsilon,c_{1}}^{\prime\prime}\left(
t_{1}\right)  <0\\
u_{\varepsilon,c_{1}}^{\prime}<0\text{ on some interval }(t_{1},t_{3}]\text{
and }u_{\varepsilon,c_{1}}\left(  t_{3}\right)  =0
\end{array}
\right.  \tag{ii}\label{ii}%
\end{equation}

Then for the given $\varepsilon$ there are two values of $c,$ say $c_{\ast}%
\in\left(  0,c_{1}\right)  $ and $c^{\ast}\in\left(  c_{1},c_{0}^{\ast
}\right)  $ such that (\ref{1}) has a homoclinic orbit.
\end{theorem}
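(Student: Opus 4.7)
The plan is a shooting argument in $c$ with $\varepsilon$ held fixed. For each $c>0$ the point $p_0$ is a saddle of (\ref{1}) with a one-dimensional unstable manifold, and we let $p_{\varepsilon,c}$ denote the branch entering $\{u>u_0,\,v>u_0,\,w>0,\,q<q_0\}$. Standard invariant-manifold theory gives continuous dependence of $p_{\varepsilon,c}$ on $c$ uniformly on compact time intervals, so the set $G=\{c>0:p_{\varepsilon,c}\text{ satisfies (ii)}\}$ is open. Since $c_1\in G$ by hypothesis, the goal reduces to showing that $G\cap(c_1,c_0^*)$ is bounded away from $c_0^*$ and $G\cap(0,c_1)$ is bounded away from $0$, and then identifying the endpoints $c^*=\sup\bigl(G\cap(c_1,c_0^*)\bigr)$ and $c_*=\inf\bigl(G\cap(0,c_1)\bigr)$ as homoclinic speeds.

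For the upper endpoint, Lemmas \ref{lem1} and \ref{lem2} together with the assumption $v_{0,c_1}(T(c_1))>q_0 S(u_{knee})$ yield $u_{0,c}(t_1(c))>u_{knee}$ for every $c\in[c_1,c_0^*)$, and as $c\uparrow c_0^*$ the fast orbit $r_c$ tends to the heteroclinic front, spending arbitrarily long time intervals in $\{u>u_{knee}\}$. Shadowing $p_{\varepsilon,c}$ by $r_c$ on such intervals, and using that the slow variable $q$ is strictly decreasing whenever $u$ stays above $u_0$ (from the sign of $1-q-\beta qS(u)$ there), one verifies that for $c$ sufficiently close to $c_0^*$ the full trajectory enters and remains in a neighborhood of the upper branch of the slow manifold $\{v=qS(u),\,w=0\}$ through a nontrivial range of $q$, with no opportunity for $u$ to reach $0$. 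Hence (ii) fails for such $c$, so $c^*<c_0^*$. A parallel argument at the lower endpoint uses that the first maximum of $u_{0,c}$ decreases continuously as $c$ decreases, and for sufficiently small $c>0$ even $r_c$ fails to rise above $u_{knee}$; shadowing then implies that the full orbit cannot execute the downward excursion required by (ii), giving $c_*>0$.

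The main obstacle is the final step: showing that the boundary trajectories $p_{\varepsilon,c^*}$ and $p_{\varepsilon,c_*}$ are genuinely homoclinic to $p_0$, rather than exhibiting some pathological limiting behavior (drifting indefinitely along a slow manifold, escaping to infinity, or making only tangential contact with the set $\{u=0\}$). This is where Conditions \ref{c0a}-\ref{c3} enter crucially: Condition \ref{c1} rules out convergence to any other rest point, Condition \ref{c0a} restricts the slow manifolds to a geometry with a unique escape route consistent with the flow orientation, and Condition \ref{c3} provides the energy-type balance ensuring that any trajectory which has executed the full upward and downward excursion must eventually return. Combining these with invariant-region arguments in the $(u,v,w,q)$ phase space should leave convergence to $p_0$ as the only possibility at each boundary value, producing the two required homoclinic orbits.
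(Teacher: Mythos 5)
Your overall scheme for the fast pulse---shoot in $c$ with $\varepsilon$ fixed, show the shooting set is open and bounded above by $c_0^{\ast}$, and identify $c^{\ast}=\sup$ as a homoclinic speed---matches the paper's strategy, but two of your supporting claims are wrong and a third key step is missing. First, ``the slow variable $q$ is strictly decreasing whenever $u$ stays above $u_0$'' is false: from $q'=\frac{\varepsilon}{c}\left(1-q-\beta qS(u)\right)$ the sign of $q'$ depends on the position of $q$ relative to the nullcline $q=\frac{1}{1+\beta S(u)}$, not on whether $u>u_0$, and on any homoclinic orbit $q'$ must change sign from negative to positive while $u$ is still well above $u_0$. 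What the paper actually proves (Lemma \ref{lem7}) is the much weaker statement that $q'<0$ as long as $u'\ge 0$. Second, your definition of the shooting set $G$ as ``$p_{\varepsilon,c}$ satisfies (ii)'' is not the one that makes the boundary analysis work: the paper's set $\Lambda$ additionally records the intermediate crossing $u(t_2)=u_0$ and allows $q(t_3)=q_0$ as an alternative exit condition, and these refinements are what make the elimination of degenerate behaviors at $c^{\ast}$ (Lemma \ref{lem5a} and the case analysis in Lemma \ref{lem4}) go through. Waving at ``invariant-region arguments'' does not replace that analysis.

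The slow-pulse half of your argument does not work as stated. You propose to bound $c_{\ast}$ away from $0$ and analyze the boundary by shadowing $p_{\varepsilon,c}$ by the fast orbit $r_c$. But $\varepsilon$ is fixed here, so as $c\downarrow 0$ the ratio $\varepsilon/c$ blows up, the $q$-equation becomes fast, and $p_{\varepsilon,c}$ is nowhere near $r_c$---shadowing by the $\varepsilon=0$ front is simply unavailable in this regime. Relatedly, for small $c$ the first maximum of $u_c$ need not occur above $u_{knee}$, so the geometric argument used near $c_0^{\ast}$ breaks. The paper resolves this with two ideas you do not have: Lemma \ref{lem10}, which shows directly that for $\varepsilon/c$ large the solution stays in $v>u$ and crosses $u=1$, giving a lower bound on admissible $c$; and, crucially, a switch to a different shooting set $\Sigma$ indexed by the sign structure of $q_c'$ rather than $u_c'$, with $c_{\ast}$ obtained as $\inf\{c>c_3:c\in\Sigma_1\}$. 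That change of shooting variable is the central new idea for the slow pulse and is absent from your proposal.
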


Figure \ref{fig2a} below includes a graph of the orbit of a solution
satisfying (\ref{i}) and (\ref{ii}) projected onto the $\left(  u,q\right)  $
plane, with the points $t_{1}$ and $t_{3}$ marked (as well as an additional
point $t_{2}$ which is explained later). The other solution shown in that
figure satisfies (\ref{i}) but not (\ref{ii}).

Theorem \ref{thm1a} is implied by Theorem \ref{thm1} and the following result.

\begin{theorem}
\label{thm2}If Conditions \ref{c0}- \ref{c3} are satisfied then there is a
$c_{1}$ satisfying the conditions in the second sentence of Theorem
\ref{thm1}. Further, with this $c_{1},$ if $\varepsilon$ is sufficiently
small, then the solution $p_{\varepsilon,c_{1}}$ of (\ref{1}) with $c=c_{1}$
satisfies (\ref{i}) and (\ref{ii}) of Theorem \ref{thm1}.
\end{theorem}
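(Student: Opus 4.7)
The plan is to establish the two claims of Theorem \ref{thm2} in turn: first the existence of $c_1\in(0,c_0^{\ast})$ satisfying (\ref{y1}), and then the properties (\ref{i}) and (\ref{ii}) of the unstable-manifold trajectory $p_{\varepsilon,c_1}$ in the full system for sufficiently small $\varepsilon$.

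For the first claim I will take $c_1$ just below $c_0^{\ast}$. The key observation is that $T(c)\leq t_1(c)$ for every $c\in(0,c_0^{\ast})$: at $t_1(c)$ one has $v_{0,c}=u_{0,c}$ with $u_{0,c}'$ changing from positive to negative, so $w_{0,c}(t_1(c))=v_{0,c}'(t_1(c))\leq 0$, and since by Lemma \ref{lem2} $w_{0,c}$ possesses a unique zero, this zero lies at or before $t_1(c)$. Hence $v_{0,c}$ attains its maximum at $T(c)$ and
\[
v_{0,c}(T(c))\geq v_{0,c}(t_1(c))=u_{0,c}(t_1(c)).
\]
Lemma \ref{lem2} gives $u_{0,c}(t_1(c))\to u_+$ as $c\to c_0^{\ast-}$, so $\liminf_{c\to c_0^{\ast-}}v_{0,c}(T(c))\geq u_+$. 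The equilibria of (\ref{2}) solve $u=q_0 S(u)$, so in particular $u_+=q_0 S(u_+)$; moreover $u_{knee}<u_+$, because $u_{knee}$ is the local minimum of $h$ sitting between the adjacent roots $u_m$ and $u_+$ of $h=q_0$. Monotonicity of $S$ then gives $q_0 S(u_{knee})<q_0 S(u_+)=u_+$, so selecting $c_1$ sufficiently close to $c_0^{\ast}$ from below yields (\ref{y1}).

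For the second claim I will examine the unstable manifold of $p_0$ in the full system. At $\varepsilon=0$ the plane $\{q=q_0\}$ is invariant and the restricted flow coincides with (\ref{2}); the Jacobian of (\ref{1}) at $p_0$ therefore decomposes into the $3\times 3$ fast block (a saddle with one positive eigenvalue $\lambda_+$ and two of negative real part by Lemma \ref{lem1}) together with a trivial $q$-eigenvalue equal to $0$. Since $q_0=1/(1+\beta S(u_0))$ at equilibrium, for $\varepsilon>0$ this latter eigenvalue perturbs to $-\varepsilon(1+\beta S(u_0))/c+O(\varepsilon^2)<0$, so $p_0$ remains hyperbolic and its unstable manifold $\mathcal{U}_{p_0,\varepsilon}$ is one-dimensional and $C^1$ in $\varepsilon$. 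Let $p_{\varepsilon,c_1}$ parameterize the branch of $\mathcal{U}_{p_0,\varepsilon}$ that agrees with $\mathcal{U}_{0,c_1}^{+}$ at $\varepsilon=0$, normalized by a transversal section; condition (\ref{i}) is then immediate. A direct computation gives the unperturbed unstable eigenvector $(\xi_u,\xi_v,\xi_w)=(1,1+c_1\lambda_+,\lambda_+(1+c_1\lambda_+))$, with all components strictly positive.

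To verify (\ref{ii}), let $\bar t_1=t_1(c_1)$ and let $\bar t_3>\bar t_1$ denote the first zero of $u_{0,c_1}$; this exists and is simple because, by Lemma \ref{lem2}, $u_{0,c_1}\to -\infty$ with $u_{0,c_1}'<0$ throughout $(\bar t_1,\infty)$. Smoothness of $\mathcal{U}_{p_0,\varepsilon}$ in $\varepsilon$ together with positivity of $\xi_u$ yields $u_{\varepsilon,c_1}'>0$ on $(-\infty,-T]$ for $T$ sufficiently large, uniformly in small $\varepsilon$. On the compact interval $[-T,\bar t_3+1]$, standard continuous dependence on parameters gives $p_{\varepsilon,c_1}\to(r_{c_1},q_0)$ in $C^2$ as $\varepsilon\to 0^+$. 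The implicit function theorem, applied at the transversal zero of $u_{0,c_1}'$ at $\bar t_1$ (since $u_{0,c_1}''(\bar t_1)<0$) and at the transversal zero of $u_{0,c_1}$ at $\bar t_3$, then produces unique nearby times $t_1(\varepsilon)\to\bar t_1$ and $t_3(\varepsilon)\to\bar t_3$ with $u_{\varepsilon,c_1}'(t_1(\varepsilon))=0$, $u_{\varepsilon,c_1}''(t_1(\varepsilon))<0$, and $u_{\varepsilon,c_1}(t_3(\varepsilon))=0$. The signs of $u_{\varepsilon,c_1}'$ on the intermediate subintervals $[-T,t_1(\varepsilon)]$ and $[t_1(\varepsilon),t_3(\varepsilon)]$ follow from the strict sign of $u_{0,c_1}'$ away from $\bar t_1$. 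The main obstacle is gluing the invariant-manifold asymptotics near $p_0$ to the finite-interval continuous dependence so that the qualitative picture of $r_{c_1}$ is preserved throughout $(-\infty,t_3(\varepsilon)]$; because every relevant feature of $r_{c_1}$ is transversal and the middle interval is compact, this is a standard singular-perturbation construction and poses no fundamental difficulty.
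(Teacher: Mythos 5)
Your proof is correct and follows essentially the same route as the paper: you select $c_1$ near $c_0^{\ast}$ using the limit $r_c(t_1(c)) \to (u_+,u_+,0)$ from Lemma~\ref{lem2}, together with $T(c) \leq t_1(c)$ and $q_0 S(u_{knee}) < u_+ = q_0 S(u_+)$, to secure (\ref{y1}); you then transfer the transversal structure of $r_{c_1}$ on $(-\infty,\bar t_3]$ to $p_{\varepsilon,c_1}$ for small $\varepsilon$ by continuity of the unstable manifold and continuous dependence on compact intervals, exactly as in the paper's Lemma~\ref{lem2a}. The one detail that deserves more care is your assertion that $\mathcal{U}_{p_0,\varepsilon}$ is $C^1$ in $\varepsilon$: since $p_0$ is not hyperbolic at $\varepsilon=0$ (the $q$-eigenvalue vanishes there), the standard unstable-manifold theorem does not directly give parameter dependence down to $\varepsilon=0$; the paper obtains the needed continuity of the strong unstable branch $\mathcal{U}_{\varepsilon,c}^{+}$ by applying Hartman's theorem to (\ref{1}) augmented with $\varepsilon'=0$ and $c'=0$ (see the footnote in Appendix~B), and this $C^0$ continuity is in fact all your argument requires.
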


It will follow from the proofs of these results that as $\varepsilon
\rightarrow0,$ $c^{\ast}\rightarrow c_{0}^{\ast}$. \ The following result is
all we have proved about the asymptotic behavior of $c_{\ast}.$

\begin{theorem}
\label{thm3}%
\[
\lim_{\varepsilon\rightarrow0}c_{\ast}=0.
\]
However there is an $M>0$ independent of $\varepsilon$ such that if there is a
homoclinic orbit for $c=c_{\ast}$ then $\frac{\varepsilon}{c_{\ast}}<M.$
\end{theorem}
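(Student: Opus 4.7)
The plan is to treat the two assertions separately, each by contradiction, exploiting the singular perturbation structure of (\ref{1}) in complementary directions.

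For the vanishing $c_\ast \to 0$: suppose instead that $c_\ast(\varepsilon_n) \geq c_0 > 0$ along some sequence $\varepsilon_n \to 0$, with corresponding homoclinic orbits $p_n$. Then $\varepsilon_n/c_n \to 0$, so $|q'|$ is uniformly small and $q$ remains close to $q_0$ on any bounded time window. A Gronwall-type estimate makes $p_n$ track the frozen-$q$ orbit $r_{c_n}$ of (\ref{2}) over time intervals of length at least of order $\log(1/\varepsilon_n)$. Since $c_n \leq c_1 < c_0^\ast$, Lemma \ref{lem2} guarantees that $r_{c_n}$ eventually turns around, with $u$ and $v$ diverging to $-\infty$ exponentially. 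I would then argue that once $p_n$ has been carried deep into the region $u \ll u_0$ while $q$ is still nearly $q_0$, it cannot return to $p_0$: reversal would require $q$ to change by an amount of order one, which takes time of order at least $c_n/\varepsilon_n$, but over that time the frozen dynamics continue to pull the orbit farther from $p_0$. This contradicts the homoclinic hypothesis and forces $c_\ast \to 0$.

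For the bound $\varepsilon/c_\ast < M$: suppose instead that some sequence has $\varepsilon_n/c_n \to \infty$ with a homoclinic at $c = c_n$. Rescaling time by $\sigma = (\varepsilon_n/c_n) t$ transforms the $q$-equation into $dq/d\sigma = 1 - q(1 + \beta S(u))$, which is order one, while the $(u, v, w)$-equations acquire small prefactors; moreover $u$ equilibrates to $v$ still faster through the stiff equation $u' = (v - u)/c_n$. In the singular limit the orbit is confined to the invariant set $\{u = v,\ q = 1/(1 + \beta S(v))\}$, on which $(v, w)$ satisfy a planar reduced system. By Condition \ref{c1} the unique equilibrium of this planar system is $(u_0, 0)$, and I would analyze the resulting phase portrait to exclude a homoclinic loop to it (checking the type of the equilibrium from the sign of the derivative of $v \mapsto v - S(v)/(1 + \beta S(v))$ at $u_0$). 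A standard singular perturbation argument would then transport the non-existence to the full system for $\varepsilon/c$ sufficiently large, yielding the uniform bound $M$.

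The main obstacle is the first argument. The Gronwall estimate is valid only on one finite-length window, so I must rule out that $p_n$ can be composed of alternating fast and slow segments which, strung together, return to $p_0$. A cleaner route might be to identify an invariant quantity or a global comparison: for example, once $(u, v, w)$ has been pushed past the turning point of $r_{c_n}$ with $q$ still trapped near $q_0$, show that the only way to restore $u$ to $u_0$ is through a slow segment on a branch of the curve $q = h(u)$, and then verify that no such slow segment can reconnect to $p_0$ unless $c_n$ is arbitrarily small. Establishing such a global obstruction is the principal technical step; part (b), by contrast, reduces to a comparatively clean analysis of a planar reduced system together with a standard perturbation argument.
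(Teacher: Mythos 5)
Your proposal misreads what the first assertion requires, and both halves have gaps. On the first half, the paper does \emph{not} attempt to rule out homoclinic orbits at speeds bounded away from zero (indeed the fast pulse persists at $c^\ast\to c_0^\ast$, so any non-existence statement has to be handled carefully). The number $c_\ast$ is, by construction in Theorem~\ref{thm1}, a speed in $(0,c_1)$; Theorem~\ref{thm3} is proved simply by observing that the continuity argument behind Theorem~\ref{thm2} (Lemma~\ref{lem2a}) works uniformly for $c_1\in[\delta,c_0^\ast-\delta]$, so for $\varepsilon$ small one may take $c_1=\delta$ and immediately get $c_\ast<\delta$. Your non-existence route is therefore strictly stronger than what is needed and, as you acknowledge, leaves a real hole: the Gronwall window is finite and you have not excluded an eventual return. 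That hole can in fact be closed — once $p_n$ has been carried into a configuration with $v<0$, $w<0$, and $q$ still in $(\frac{1}{1+\beta},1)$, Proposition~\ref{prop1} makes that region positively invariant and forces $v\to-\infty$, so no composition of slow and fast segments can bring the orbit back to $p_0$ — but you did not invoke it, and the paper's route sidesteps the whole issue.

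On the second half, your reduction is in the right spirit (it is essentially what Lemma~\ref{lem10} does, sending $u\to v$ for $c$ small and $q\to 1/(1+\beta S(u))$ for $\varepsilon/c$ large, to arrive at $v''=b^2 Z(v)$ with $Z(v)=v-S(v)/(1+\beta S(v))$), but the step ``exclude a homoclinic loop by checking the type of the equilibrium from the sign of $Z'(u_0)$'' does not work: a saddle of a planar conservative system can perfectly well carry a homoclinic loop, so linearization alone decides nothing. What is actually used is the \emph{global} sign $Z(v)>0$ for $v>u_0$, which follows from Condition~\ref{c1} (unique equilibrium of (\ref{1}) $\Leftrightarrow$ unique zero of $Z$). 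Then $v''>0$ as soon as $v>u_0$ with $v'>0$, so $v$ is carried past $1$ in bounded time, and the uniform limits (\ref{x1})--(\ref{x2}) transfer this to the full system for $c$ small, $\varepsilon/c$ large. Your phrase ``standard singular perturbation argument'' is precisely the content that needs to be written out; the paper makes it quantitative via Lemma~\ref{lem15} and the $\bar w$ estimate rather than appealing to a general theorem.
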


\begin{remark}
For a given pair $\left(  \varepsilon,c_{1}\right)  $, the hypotheses of
Theorem \ref{thm1} can be verified by checking one solution of (\ref{2}) at
$c=c_{1}$ and one solution of (\ref{1}), with the given $\varepsilon$ and
$c=c_{1}.$ For the specific model considered in \cite{faye}, standard
numerical analysis (non-rigorous) easily finds specific values of $\left(
\varepsilon,c_{1}\right)  $ where these hypotheses are apparently
satisfied\footnote{\bigskip For the parameter values used by Faye, a standard
ode solver suggests that $\left(  \varepsilon,c_{1}\right)  =\left(
.005,.34\right)  $ \ satisfies the conditions in Theorem \ \ref{thm1}. If the
conjecture in Remark \ref{rem1} is true then it appears that $\left(
\varepsilon,c_{1}\right)  =\left(  .05,.2\right)  $ would work.}. In the
discussion section we describe how this could, in principle, be checked
rigorously using uniform asymptotic analysis near the equilibrium points of
(\ref{2}) and \ref{1}), and then a rigorous numerical ode solver (using
interval arithmetic) over two compact intervals. We cite a paper\footnote{(on
the Lorenze equations)} where a similar procedure was followed successfully,
but we have not attempted it here.
\end{remark}

\begin{remark}
In \cite{faye} only one homoclinic solution is found, and there is an extra
hypothesis about the system (\ref{2}). (Hypotheses 3.1) \ As far as we know,
this hypothesis can only be checked by numerically solving the system
(\ref{2}).\footnote{It apears to us that because of the degeneracy at the
knee, it would be harder to verify this hypothesis rigorously than to do the
same for (i) and (ii).} \ We discuss this further in Section \ref{discussion}.
\end{remark}

\section{Proof of Theorem \ref{thm1}}

\subsection{The fast pulse}

We need two simple preliminary results about the behavior of solutions.

\begin{proposition}
\label{prop1}For any $\varepsilon\geq0$ the regions $\left\{  v<0,w<0,\frac
{1}{1+\beta}<q<1\right\}  $ and

$\left\{  v>1,\ w>0,\frac{1}{1+\beta}<q<1\right\}  $\ are positively invariant
open sets for the system (\ref{1}).
\end{proposition}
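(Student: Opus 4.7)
The plan is to verify invariance face-by-face: on each relative boundary of the open box defining the region, one checks that the corresponding component of the vector field of (\ref{1}) either points strictly into the region or (for the $q$-faces when $\varepsilon = 0$) is tangent with $q$ frozen. Since the region is a product of half-spaces in the $v$, $w$, and $q$ directions with $u \in \mathbb{R}$ unconstrained, only three pairs of faces need to be examined.

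For the first region $\{v<0,\ w<0,\ \frac{1}{1+\beta}<q<1\}$, I would check the five relevant faces as follows. On $v=0$ (with $w<0$): $v' = w < 0$, so the flow pushes $v$ back below $0$. On $w=0$ (with $v<0$, $0<q<1$): $w' = b^2(v - qS(u)) < 0$ because $v<0$ and $qS(u)>0$. On $q = \frac{1}{1+\beta}$: a direct computation gives
\[
q' = \frac{\varepsilon}{c}\left(1 - \tfrac{1}{1+\beta} - \tfrac{\beta}{1+\beta}S(u)\right) = \frac{\varepsilon\beta}{c(1+\beta)}\bigl(1-S(u)\bigr) \geq 0,
\]
with strict positivity when $\varepsilon > 0$ (using $0 < S < 1$); for $\varepsilon = 0$ the coordinate $q$ is frozen, so the face cannot be crossed. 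On $q=1$: $q' = -\frac{\varepsilon}{c}\beta S(u) \leq 0$, again strictly when $\varepsilon>0$, and frozen otherwise.

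For the second region $\{v>1,\ w>0,\ \frac{1}{1+\beta}<q<1\}$ the $q$-faces are treated identically. On $v=1$ (with $w>0$): $v' = w > 0$, pushing $v$ back above $1$. On $w=0$ (with $v>1$, $q<1$): $w' = b^2(v-qS(u)) > b^2(1 - qS(u)) > 0$, since $qS(u)<1\cdot 1 = 1$.

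There is no real obstacle here; the only point that requires care is to notice that the bound $0<S<1$, stated just after the four conditions in Section~2, is precisely what is needed to make the signs of $w'$ and of $q'$ on the two $q$-faces come out correctly, and that the statement for $\varepsilon = 0$ is trivial on the $q$-faces because $q$ is then a conserved quantity. Since $u$ appears only through the bounded positive function $S(u)$ in the vector field components relevant to the boundary faces, no control over $u$ is needed, and the argument goes through uniformly in $\varepsilon \geq 0$.
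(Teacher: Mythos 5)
Your proof is correct and takes essentially the same approach as the paper: the paper first shows the strip $\frac{1}{1+\beta}<q<1$ is positively invariant using $0<S<1$, then observes $v''=w'<0$ when $v\le 0$ and $w'>0$ when $v\ge 1$, which is exactly your face-by-face sign check written more tersely. Your explicit treatment of the $\varepsilon=0$ case (where $q$ is frozen) is a small but genuine improvement in precision over the paper's wording.
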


\begin{proof}
We are assuming that $0<S\left(  u\right)  <1$ for all $u.$ Hence, $q^{\prime
}>0$ if $q\leq\frac{1}{1+\beta}$ and $q^{\prime}<0$ if $q\geq1.$ Therefore
$\left\{  \frac{1}{1+\beta}<q<1\right\}  $ is positively invariant. Further,
if $\frac{1}{1+\beta}<q<1$ then $v^{\prime\prime}=w^{\prime}<0$ if $v\leq0$
and $w^{\prime}>0$ if $v\geq1.$ The result follows.
\end{proof}

Note as well that because $S$ is bounded, all solutions of (\ref{1}) exist on
$R=\left(  -\infty,\infty\right)  .$

\begin{proposition}
\label{prop0} If $p=\left(  u,v,w,q\right)  $ is a solution of (\ref{1}), and
$u\left(  t\right)  \geq u_{knee}$ for some $t,$ then either $q^{\prime
}\left(  t\right)  <0$ or $q\left(  t\right)  <h\left(  u_{knee}\right)  .$
\end{proposition}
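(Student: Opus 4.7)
The plan is to argue by contradiction: assume that at some time $t$ we have $u(t)\ge u_{knee}$, $q(t)\ge h(u_{knee})$, and $q'(t)\ge 0$ simultaneously, and derive a contradiction from the $q$-equation of (\ref{1}). Since $\varepsilon,c>0$, the inequality $q'(t)\ge 0$ rearranges to $q(t)\le 1/(1+\beta S(u(t)))$. Using that $S$ is increasing (Condition \ref{c0}) together with $u(t)\ge u_{knee}$, this upper bound is itself at most $1/(1+\beta S(u_{knee}))$. The whole proposition therefore reduces to the purely geometric inequality
\[
\frac{1}{1+\beta S(u_{knee})} < h(u_{knee}),
\]
which does not involve the solution at all; once it is in hand, the chain $q(t)\le 1/(1+\beta S(u(t)))\le 1/(1+\beta S(u_{knee}))<h(u_{knee})$ contradicts $q(t)\ge h(u_{knee})$.

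To establish the geometric inequality I would exploit the shape information in Conditions \ref{c0a}--\ref{c2}. By Condition \ref{c1} the curves $q=h(u)$ and $q=1/(1+\beta S(u))$ in the $(u,q)$-plane meet at the single point $u=u_0$, where both take the value $q_0$. By Condition \ref{c2} the equation $h(u)=q_0$ has three distinct roots $u_0<u_m<u_+$; combined with the profile from Condition \ref{c0a} (strictly increasing up to some local maximum $u_{max}$, then strictly decreasing down to the local minimum at $u_{knee}$, then strictly increasing), the fact that $h$ can hit a given value at most once per monotone branch forces these three roots to be distributed one per branch, so in particular $u_0<u_{max}<u_{knee}$. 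On $(u_0,u_{max}]$ the function $h$ is then strictly increasing above $q_0$, while $1/(1+\beta S)$ is strictly decreasing below $q_0$, so $h>1/(1+\beta S)$ just to the right of $u_0$. Uniqueness of the intersection (Condition \ref{c1}) then propagates this strict inequality to all $u>u_0$, and in particular to $u=u_{knee}$.

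The argument has essentially no obstacle: the only step requiring a moment's care is the bookkeeping that places the three roots of $h(u)=q_0$ one on each monotone branch of $h$, which is what pins down $u_0<u_{max}<u_{knee}$ and lets the uniqueness argument bite. Everything else is a short chain of strict inequalities, and in particular no use is made of the values of $\varepsilon$ or $c$ beyond their positivity.
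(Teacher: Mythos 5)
Your proposal is correct and takes essentially the same route as the paper: both reduce the statement to the geometric fact that the $q$-nullcline $q=\tfrac{1}{1+\beta S(u)}$ lies strictly below the point $\bigl(u_{knee},h(u_{knee})\bigr)$, which the paper asserts in one line as a consequence of Condition~\ref{c1} (with a pointer to Figure~\ref{figurea}). You simply supply the details the paper leaves to the figure, namely the chain $q\le\tfrac{1}{1+\beta S(u)}\le\tfrac{1}{1+\beta S(u_{knee})}$ and the intermediate-value argument (combined with Conditions~\ref{c0a} and~\ref{c2} locating $u_{knee}$ to the right of $u_0$) showing the two curves cannot cross again past $u_0$.
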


\begin{proof}
This follows from Condition \ref{c1}, which implies that the graph of the
decreasing function $q=\frac{1}{1+\beta S\left(  u\right)  }$ in the $\left(
q,u\right)  $ plane, where $q^{\prime}=0$, passes under the point $\left(
u_{knee},h\left(  u_{knee}\right)  \right)  $. (See Figure \ref{figurea}.)
\end{proof}

In the first, and longest, part of the proof of Theorem \ref{thm1} we show
that there is a \textquotedblleft fast\textquotedblright\ pulse, with speed
$c^{\ast}(\varepsilon)$ which tends to $c_{0}^{\ast}$ as $\varepsilon$ tends
to zero. In the second part we look for a \textquotedblleft
slow\textquotedblright\ pulse, with a speed $c_{\ast}(\varepsilon)$ which
tends to zero as $\varepsilon$ tends to zero.

We will show that for any possible homoclinic orbit, $u>0$. We look for
homoclinic orbits such that, as well, $q<q_{0}$ in $(-\infty,\infty)$. In
searching for the fast solution we will consider for each $c>0$ a certain
uniquely defined solution $p_{c}=(u_{c},v_{c},w_{c},q_{c})$ such that
$p_{c}(-\infty)=p_{0}$. We will show that there is a nonempty bounded set of
positive values of $c$, called $\Lambda(\varepsilon)$, such that, among other
properties of $p_{c}$, either $q_{c}$ exceeds $q_{0}$ at some point, or
$u_{c}$ becomes negative. We then examine the behavior of $p_{c^{\ast
}(\varepsilon)}$ where $c^{\ast}(\varepsilon)=\sup\Lambda(\varepsilon)$. The
goal is to show that $p_{c^{\ast}(\varepsilon)}(\infty)=p_{0}$. This is done
be eliminating all the other possible behaviors of $p_{c^{\ast}(\varepsilon)}%
$, often by showing that a particular behavior implies that all values of $c$
close to $c^{\ast}(\varepsilon)$ are not in $\Lambda(\varepsilon)$.

The following result is basic to our analysis of the full system (\ref{1}).
The proof is routine and again left to the appendix.

\begin{lemma}
\label{lem2b} Suppose that Conditions \ref{c0}- \ref{c3} hold, and let
$p_{0}=\left(  u_{0},u_{0},0,q_{0}\right)  $ be the unique equilibrium point
of (\ref{1}). Then for any $\varepsilon\geq0$ and $c>0$ the system (\ref{1})
has a one dimensional unstable manifold at $p_{0}$, say $\mathcal{U}%
_{\varepsilon,c},$ with branch $\mathcal{U}_{\varepsilon,c}^{+}$ starting in
the region $\left\{  u>u_{0},v>u_{0},w>0,\right\}  .$ If $p_{\varepsilon
,c}=\left(  u_{\varepsilon,c},v_{\varepsilon,c},w_{\varepsilon,c}%
,q_{\varepsilon,c}\right)  $ is a solution lying on this manifold, then for
large negative $t,$ $u_{0}<u_{\varepsilon,c}\left(  t\right)  <v_{\varepsilon
,c}\left(  t\right)  $ and $w_{\varepsilon,c}\left(  t\right)  >0.$ Also,
$q_{0,c}\equiv q_{0},$ while if $\varepsilon>0$ then $q_{\varepsilon,c}\left(
t\right)  <q_{0}$ for large negative $t.$ The invariant manifold
$\mathcal{U}_{\varepsilon,c}^{+}$ depends continuously on $\left(
\varepsilon,c\right)  $ in $\varepsilon\geq0,$ $c>0.$ \ (The meaning of
continuity here is made clear in the text below.) Finally, if $\lambda
_{1}\left(  c,\varepsilon\right)  $ is the positive eigenvalue of the
linearization of (\ref{1}) around $p_{0},$ then $\lambda_{1}\left(
c,\varepsilon\right)  >\lambda_{1}\left(  c,0\right)  $ for each $c>0$ and
$\varepsilon>0.$
\end{lemma}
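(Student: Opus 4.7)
The plan is to verify every assertion by linearizing (\ref{1}) at $p_0$ and studying the characteristic polynomial. Using $u_0 = q_0 S(u_0)$ and $1+\beta S(u_0)=1/q_0$ to simplify entries, the Jacobian $J(c,\varepsilon)$ has only two nonzero entries in its $q$-row, so cofactor expansion along that row gives the clean identity
\begin{equation*}
P(\mu) := \det(\mu I - J(c,\varepsilon)) = \Bigl(\mu + \tfrac{\varepsilon}{c q_0}\Bigr)P_f(\mu) - \tfrac{\varepsilon B}{c^{2}},
\end{equation*}
where $P_f(\mu)=\det(\mu I - J_f)$ is the characteristic polynomial of the linearization $J_f$ of the fast system (\ref{2}) at $(u_0,u_0,0)$, and $B:=\beta q_0 b^{2}S'(u_0)S(u_0)>0$. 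Condition \ref{c0a} together with Figure \ref{figurea} forces $h$ to cross the level $q_0$ with positive slope at $u_0$ (since $u_0$ is the leftmost of three intersections), which is equivalent to $q_0 S'(u_0)<1$; evaluating $P_f$ at $\mu=0$ and $\mu=b$ then locates a unique positive real root $\lambda_1(c,0)\in(0,b)$.

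Expanding $P(\mu)$ as a quartic, the coefficients of $\mu^{4}$ and $\mu^{3}$ are positive, the coefficient of $\mu^{2}$ is $-b^{2}+\varepsilon/(c^{2}q_0)$ (of indeterminate sign), and the coefficients of $\mu$ and $1$ are nonpositive (strictly negative when $\varepsilon>0$, with the constant term vanishing exactly when $\varepsilon=0$). In either sign scenario the sequence has exactly one change, so Descartes' rule of signs delivers a unique positive eigenvalue $\lambda_1(c,\varepsilon)$; for $\varepsilon=0$ the factorization $P(\mu)=\mu P_f(\mu)$ explains the zero eigenvalue in the $q$-direction, while for $\varepsilon>0$ the equilibrium is hyperbolic. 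Solving the resulting linear system for the eigenvector $\xi=(U,V,W,Q)$ at $\lambda=\lambda_1$ yields
\begin{equation*}
V=(1+c\lambda)U,\qquad W=\lambda V,\qquad Q=-\frac{\varepsilon\,\beta q_0 S'(u_0)}{c\lambda + \varepsilon/q_0}\,U,
\end{equation*}
so that the choice $U>0$ produces $V>U>0$, $W>0$ and $Q\leq 0$ (with strict inequality for $\varepsilon>0$). This identifies the branch $\mathcal{U}_{\varepsilon,c}^{+}$, and the asymptotic expansion $p_{\varepsilon,c}(t)-p_0\sim Ae^{\lambda_1 t}\xi$ as $t\to-\infty$ (valid because $\lambda_1$ is simple) gives all the claimed signs, along with $q_{0,c}\equiv q_0$ since $q$ is conserved when $\varepsilon=0$.

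For the final inequality $\lambda_1(c,\varepsilon)>\lambda_1(c,0)$, I would substitute $\mu=\lambda_1(c,0)$ into the factored form of $P$: since $P_f(\lambda_1(c,0))=0$, this gives $P(\lambda_1(c,0))=-\varepsilon B/c^{2}<0$, while $P(\mu)\to+\infty$ as $\mu\to\infty$. Because $P$ has only one positive real root, that root must lie strictly to the right of $\lambda_1(c,0)$. Continuous dependence of $\mathcal{U}_{\varepsilon,c}^{+}$ on $(\varepsilon,c)$ for $\varepsilon>0$ then follows from the invariant manifold theorem at a hyperbolic equilibrium whose data vary smoothly; parametrizing the branch by arclength from $p_0$ and reading ``continuity'' as $C^{1}$ convergence of finite initial segments makes the statement precise.

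The main potential obstacle is to extend this continuity down to $\varepsilon=0$, where the equilibrium is non-hyperbolic owing to the zero $q$-eigenvalue. This is handled by observing that the Descartes count above delivers a uniform spectral gap: $\lambda_1(c,\varepsilon)$ is bounded away from the other three eigenvalues on any compact subset of $\{c>0,\ \varepsilon\geq 0\}$. The strong (fast) unstable manifold theorem then produces a one-dimensional invariant manifold tangent to $\xi(c,\varepsilon)$ and depending continuously on $(\varepsilon,c)$ right down to $\varepsilon=0$, completing all parts of the lemma.
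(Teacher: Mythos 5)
Your cofactor expansion along the $q$-row giving the factored form
$P(\mu) = \bigl(\mu + \tfrac{\varepsilon}{cq_0}\bigr)P_f(\mu) - \tfrac{\varepsilon B}{c^2}$
is clean and, incidentally, corrects a typo in the paper's own displayed identity (which has $c$ rather than $c^2$ under the last term). The Descartes count, the eigenvector formulas, and the monotonicity $\lambda_1(c,\varepsilon)>\lambda_1(c,0)$ via $P(\lambda_1(c,0))=-\varepsilon B/c^2<0$ are all essentially the paper's argument in slightly different clothing. However, there is a genuine gap: Descartes' rule of signs only shows that $P$ has exactly one \emph{real} positive root. It does not rule out a complex-conjugate pair of eigenvalues with positive real part, and if such a pair existed the unstable manifold at $p_0$ would be three-dimensional, not one-dimensional. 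Since the one-dimensionality of $\mathcal{U}_{\varepsilon,c}$ is the central claim of the lemma (and the input to the strong unstable manifold theorem you invoke, which requires $\lambda_1$ to be to the right of the rest of the spectrum), this step cannot simply be asserted with the word ``hyperbolic.''

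The paper closes this gap by a separate continuity argument: at $\varepsilon=0$ the spectrum is one positive, two negative, and one zero; $\det B<0$ for $\varepsilon>0$ pushes the zero eigenvalue left; and as $\varepsilon$ increases no eigenvalue can return to the imaginary axis, because the determinant never vanishes and a pure-imaginary pair is impossible on sign grounds (if $g$ had roots $\{\lambda_1>0,\ \mu_2<0,\ \pm i\omega\}$ then the coefficients of $X^3$ and $X$ would share the sign of $-(\lambda_1+\mu_2)$, whereas in $g$ the $X^3$-coefficient is positive and the $X$-coefficient is negative). You need to include this (or an equivalent Routh--Hurwitz or homotopy argument) to justify that exactly one eigenvalue lies in the open right half-plane for every $\varepsilon>0$, $c>0$. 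With that addition the proposal is complete and matches the paper's route.
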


The following proposition follows trivially from (\ref{1}) and will be used a
number of times, often without specific mention.

\begin{proposition}
\label{prop2}
\begin{align*}
\text{\textit{If }}u^{\prime}  &  =0\ \text{\textit{then }}u^{\prime\prime
}=\frac{v^{\prime}}{c}.\\
\text{If }u^{\prime}  &  =u^{\prime\prime}=0\text{ then }u^{\prime\prime
\prime}=\frac{v^{\prime\prime}}{c}.\\
\text{If }u^{\prime}  &  =u^{\prime\prime}=u^{\prime\prime\prime}=0\text{ then
}u^{iv}=-\frac{b^{2}}{c}q^{\prime}S\left(  u\right)  .\\
\ \text{If }q^{\prime}  &  =0\text{\ then }q^{\prime\prime}=-\frac
{\varepsilon}{c}\beta qS^{\prime}\left(  u\right)  u^{\prime}\\
\text{If }w^{\prime}  &  =0\text{ then }v^{\prime\prime\prime}=w^{\prime
\prime}=b^{2}\left(  v^{\prime}-q^{\prime}S\left(  u\right)  -qS^{\prime
}\left(  u\right)  u^{\prime}\right)  .\\
\text{If }q^{\prime}  &  =u^{\prime}=0\text{\ then }q^{\prime\prime}=0\text{
and }q^{\prime\prime\prime}=-\frac{\varepsilon}{c}\beta qS^{\prime}\left(
u\right)  u^{\prime\prime}=-\frac{\varepsilon}{c^{2}}\beta qS^{\prime}\left(
u\right)  v^{\prime}.\\
\text{If }q^{\prime}  &  =u^{\prime}=v^{\prime}=0\text{\ then }q^{iv}%
=-\frac{\varepsilon}{c^{2}}\beta qS^{\prime}(u)w^{\prime}=-\frac{\varepsilon
}{c^{2}}\beta qS^{\prime}\left(  u\right)  v^{\prime\prime}=-\frac
{\varepsilon}{c}\beta qS^{\prime}\left(  u\right)  u^{\prime\prime\prime}.
\end{align*}

\end{proposition}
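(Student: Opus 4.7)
Each assertion is a higher-derivative identity that must hold at any point where the listed lower derivatives vanish, so the entire proposition is a chain-rule bookkeeping exercise on the system (\ref{1}). I would prove the identities in the cumulative order the author states them, since each one uses the vanishings built up by the previous ones to kill off cross terms.

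For the $u$-identities I would start from $cu' = v - u$ and differentiate $k$ times to obtain $cu^{(k+1)} = v^{(k)} - u^{(k)}$ for $k = 0,1,2,3$. Combined with $v' = w$, $v'' = w' = b^{2}(v - qS(u))$, and
\[
v''' = w'' = b^{2}\bigl(v' - q'S(u) - qS'(u)u'\bigr),
\]
this gives the stated identities immediately: $u' = 0$ yields $u'' = v'/c$; $u' = u'' = 0$ yields $u''' = v''/c = w'/c$; and $u' = u'' = u''' = 0$ forces $v' = cu''+u' = 0$, so the $v'''$ formula collapses to $-b^{2}q'S(u)$ and hence $u^{(iv)} = -(b^{2}/c)q'S(u)$. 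The identity listed under ``if $w' = 0$'' is just the displayed formula for $v'''$, which in fact holds identically.

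For the $q$-identities I would differentiate $cq' = \varepsilon(1 - q - \beta q S(u))$ to obtain $cq'' = -\varepsilon[(1+\beta S(u))q' + \beta q S'(u) u']$. At $q' = 0$ this reduces at once to $q'' = -(\varepsilon/c)\beta q S'(u) u'$; if in addition $u' = 0$, then $q'' = 0$ as well. Differentiating once more, discarding every term carrying a factor of $q'$ or $u'$ and using $q'' = 0$, one gets $q''' = -(\varepsilon/c)\beta q S'(u) u''$, which becomes $-(\varepsilon/c^{2})\beta q S'(u) v'$ after substituting $u'' = v'/c$ from the $u$-cascade. Differentiating a fourth time, now under the extra assumption $v' = 0$ (which forces $u'' = 0$ and $q''' = 0$), annihilates every term except the one carrying $u'''$, and yields $q^{(iv)} = -(\varepsilon/c)\beta q S'(u) u''' = -(\varepsilon/c^{2})\beta q S'(u) v'' = -(\varepsilon/c^{2})\beta q S'(u) w'$ upon substituting $u''' = v''/c$ and $v'' = w'$ from the $u$-cascade.

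\textbf{Main obstacle.} There is no real analytic obstacle. The only point requiring any care is tracking, at each stage, which intermediate derivatives ($q''$, $u''$, $q'''$, $\dots$) vanish as an automatic consequence of earlier lines of the proposition, so that the formulas for $q^{(iv)}$ and $u^{(iv)}$ genuinely collapse to the single terms stated. The author's presentation as a cumulative cascade is precisely what makes the computation routine: proving any one of the higher-order identities in isolation from the raw system would involve considerably more algebra than proving them all together in sequence.
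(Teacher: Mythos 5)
Your proposal is correct and carries out exactly the routine differentiation-and-substitution calculation that the paper implicitly invokes when it states Proposition~\ref{prop2} ``follows trivially from (\ref{1})'' with no written proof. All seven identities check out, including the careful point that $u'=u''=0$ forces $v'=0$, and that $q'=u'=v'=0$ forces $u''=0$, $q''=0$, $q'''=0$, so that in the fourth derivative of $q$ only the term proportional to $u'''$ survives.
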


We use the fourth item in this list to prove

\begin{lemma}
\label{lem7}For any $\varepsilon>0$ and $c>0,$ if $p$ is a solution on
$\mathcal{U}_{\varepsilon,c}^{+}$ and $u^{\prime}\ge0$ on an interval
$(-\infty,\tau]$, then $q^{\prime} <0$ on $(-\infty,\tau)$.
\end{lemma}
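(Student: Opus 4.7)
The plan is to introduce
\[
\Phi(t) \;=\; q(t)\;-\;\frac{1}{1+\beta S(u(t))},
\]
so that, from the $q$-equation of (\ref{1}),
\[
q'(t) \;=\; -\frac{\varepsilon\bigl(1+\beta S(u(t))\bigr)}{c}\,\Phi(t),
\]
and therefore $q'<0$ exactly where $\Phi>0$. Differentiating $\Phi$ and eliminating $q'$ via this identity yields the linear ODE
\[
\Phi'(t) + \frac{\varepsilon(1+\beta S(u(t)))}{c}\,\Phi(t) \;=\; \frac{\beta S'(u(t))\,u'(t)}{\bigl(1+\beta S(u(t))\bigr)^{2}},
\]
whose right-hand side is nonnegative on $(-\infty,\tau]$ because $u'\ge 0$ and $S,S'>0$. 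Multiplying by the positive integrating factor $\mu(t)=\exp\!\bigl(\int_{0}^{t}\tfrac{\varepsilon(1+\beta S(u(s)))}{c}\,ds\bigr)$ rearranges this to $(\mu\Phi)'(t)\ge 0$, so $\mu\Phi$ is nondecreasing on $(-\infty,\tau]$. Consequently, once I exhibit a single $t_{0}$ at which $\Phi(t_{0})>0$, every $t\in[t_{0},\tau]$ satisfies $\mu(t)\Phi(t)\ge\mu(t_{0})\Phi(t_{0})>0$, giving $\Phi(t)>0$ and therefore $q'(t)<0$.

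The remaining and only real obstacle is to produce such a $t_{0}$, i.e., to show that $q'<0$ for $t$ sufficiently negative on $\mathcal{U}_{\varepsilon,c}^{+}$. I would do this via the same linearization of (\ref{1}) at $p_{0}$ that already underlies Lemma \ref{lem2b}. The linearized $q$-equation is $\tilde q' = -\tfrac{\varepsilon}{cq_{0}}\tilde q - \tfrac{\varepsilon\beta q_{0}S'(u_{0})}{c}\tilde u$, so the unstable eigenvector $(e_{u},e_{v},e_{w},e_{q})$ for the positive eigenvalue $\lambda_{1}(c,\varepsilon)$ satisfies $\bigl(\lambda_{1}+\varepsilon/(cq_{0})\bigr)e_{q} = -\tfrac{\varepsilon\beta q_{0}S'(u_{0})}{c}e_{u}$; since the branch $\mathcal{U}_{\varepsilon,c}^{+}$ has $e_{u}>0$, this forces $e_{q}<0$. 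Along the unstable manifold $\tilde q(t)\sim e_{q}e^{\lambda_{1}t}$ near $-\infty$, so $q'(t)=\lambda_{1}e_{q}e^{\lambda_{1}t}<0$, and hence $\Phi(t)>0$ there. Combined with the monotonicity of $\mu\Phi$, this completes the proof.
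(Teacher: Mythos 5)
Your proof is correct, and it takes a genuinely different route from the paper's. The paper argues by contradiction at the first zero $\tau$ of $q'$, using the derivative identities of Proposition~\ref{prop2} to force $q'' = q''' = 0$ and then $q^{(iv)}<0$ at $\tau$, contradicting the choice of $\tau$; this is a pointwise argument tied to the algebra of higher derivatives at a degeneracy. You instead observe that $\Phi = q - (1+\beta S(u))^{-1}$ (the signed distance to the $q$-nullcline) satisfies a scalar linear ODE whose forcing term $\beta S'(u)u'/(1+\beta S(u))^{2}$ is nonnegative exactly where $u'\ge 0$, so the integrating-factor product $\mu\Phi$ is monotone; positivity near $-\infty$ then propagates forward. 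The anchor at $-\infty$ is the same linear algebra that underlies Lemma~\ref{lem2b}: the fourth row of $B$ gives $\bigl(\lambda_1 + \tfrac{\varepsilon}{c}(1+\beta S(u_0))\bigr)e_q = -\tfrac{\varepsilon}{c}\beta q_0 S'(u_0)\,e_u$, and since $e_u>0$ on the branch $\mathcal{U}^{+}_{\varepsilon,c}$, $e_q<0$, hence $q'<0$ (equivalently $\Phi>0$) for $t$ near $-\infty$. Note that $q<q_0$ alone would not suffice here, since $(1+\beta S(u))^{-1}<q_0$ too once $u>u_0$; your direct comparison of rates is what is actually needed, and you carry it out correctly. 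The upshot of your version is that the mechanism is made structurally transparent — the slow variable cannot cross its nullcline from the $q'<0$ side while $u$ is nondecreasing — and you obtain the slightly stronger conclusion $q'<0$ on $(-\infty,\tau]$ rather than $(-\infty,\tau)$. The paper's version keeps the argument within the toolkit of Proposition~\ref{prop2}, which is reused throughout the shooting proof.
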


\begin{proof}
If $u^{\prime}$ never changes sign, let $\sigma$ denote $\infty$. Otherwise,
suppose that $u^{\prime}$ first changes sign at $\sigma.$ If $\ q^{\prime
}\left(  \tau\right)  =0$ for some $\tau<\sigma$ and $\tau$ is the first zero
of $q^{\prime}$, then $q^{\prime\prime}\left(  \tau\right)  \geq0,$ and by the
fourth item of Proposition \ref{prop2}, $u^{\prime}\left(  \tau\right)  \leq
0$. From the definitions of $\sigma$ and $\tau,$ $u^{\prime}\left(
\tau\right)  =0$ and so $q^{\prime\prime}\left(  \tau\right)  =0.$ Since
$u^{\prime}$ does not change sign at $\tau,$ $u^{\prime\prime}\left(
\tau\right)  =0$ and so $q^{\prime\prime\prime}\left(  \tau\right)  =0.$ Hence
at $\tau$,
\[
u^{\prime}=u^{\prime\prime}=q^{\prime}=q^{\prime\prime}=q^{\prime\prime\prime
}=0.
\]
If $u^{\prime\prime\prime}(\tau)=0$ then $p\left(  \tau\right)  $ is an
equilibrium point, a contradiction. If $u^{\prime\prime\prime}\left(
\tau\right)  <0$ then $\tau$ is a local maximum of $u^{\prime},$ which is
inconsistent with the assumption that $u^{\prime}\geq0$ on $(-\infty,\sigma].$
\ Hence $u_{c}^{\prime\prime\prime}\left(  \tau\right)  >0.$ But then
$q^{iv}\left(  \tau\right)  <0.$ \ This again implies that $q^{\prime}>0$ on
some interval to the left of $\tau,$ contradicting the definition of $\tau.$
This completes the proof of Lemma \ref{lem7}.
\end{proof}

\begin{lemma}
\label{lem2c} If $p=\left(  u,v,w,q\right)  $ is a solution on $\mathcal{U}%
_{\varepsilon,c}^{+}$ then $w>0$ on an interval $(-\infty,\tau]$ with
$u\left(  \tau\right)  =u_{m}.$
\end{lemma}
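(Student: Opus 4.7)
The plan is to locate a first time $\tau$ at which $u$ reaches $u_m$ and to show that $w>0$ throughout $(-\infty,\tau]$. The central geometric input, which follows from Condition \ref{c0a} together with the relations $h(u_0)=h(u_m)=q_0$ and the fact that $h$ has only a single local maximum between them, is the strict inequality $u>q_0 S(u)$ for every $u\in(u_0,u_m)$.

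First I would set $\sigma=\sup\{T:u'>0\text{ on }(-\infty,T)\}$, which is $>-\infty$ by Lemma \ref{lem2b}. On $(-\infty,\sigma)$ the identity $cu'=v-u$ combined with the initial behaviour in Lemma \ref{lem2b} gives $u_0<u<v$, and then Lemma \ref{lem7} yields $q<q_0$. On the portion of $(-\infty,\sigma)$ where $u<u_m$ the geometric inequality gives
\[
qS(u)<q_0 S(u)<u<v,
\]
so $w'=b^2(v-qS(u))>0$; combined with $w>0$ near $-\infty$, this shows that $w$ is strictly positive and strictly increasing on this set.

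It remains to show that $u$ reaches $u_m$ at some finite $\tau\le\sigma$. If $\sigma$ is finite and $u(\sigma)<u_m$, then $u'(\sigma)=0$ by continuity, yet Proposition \ref{prop2} gives $u''(\sigma)=w(\sigma)/c>0$ from the previous paragraph, which forces $u'>0$ in a full neighbourhood of $\sigma$ and contradicts the definition of $\sigma$. If $\sigma=\infty$ and $u<u_m$ throughout, then $u$ is monotone and bounded, so $u\to L\le u_m$; the relation $cu'=v-u$ and boundedness of $u$ then force $v$ bounded with $v\to L$ (else $u'$ would stay above a positive constant and $u$ would grow without bound), giving $\int_{-\infty}^\infty w=L-u_0<\infty$. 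But $w$ is strictly positive and increasing, so $\int_{t_0}^\infty w(s)\,ds=+\infty$ for any finite $t_0$, a contradiction. Hence a finite $\tau\le\sigma$ exists, and $w(\tau)>0$ by monotonicity.

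The main obstacle is the analysis at infinity in the last subcase: monotone boundedness of $u$ alone only yields a limit for $u$, and one must bootstrap through the coupling $cu'=v-u$ and the monotonicity of $v$ and $w$ to force the desired contradiction. Everything else is a direct consequence of the sign of $h-q_0$ on $(u_0,u_m)$ and the basic ODE identities in Proposition \ref{prop2}.
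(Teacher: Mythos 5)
Your proposal is correct and rests on the same two ingredients the paper uses: the inequality $h(u)>q_0$ on $(u_0,u_m)$ (giving $w'>0$ while $q\le q_0$ and $u\le u_m$) and Lemma~\ref{lem7} (giving $q<q_0$ while $u'>0$), combined in a bootstrap. You additionally make explicit the final step that $u$ actually reaches $u_m$, which the paper leaves implicit; one small slip there is the phrase ``forces $u'>0$ in a full neighbourhood of $\sigma$''---since $u'(\sigma)=0$ and $u''(\sigma)>0$, what you actually get is $u'<0$ just to the left of $\sigma$, which is the contradiction with $u'>0$ on $(-\infty,\sigma)$, so the conclusion stands even though the stated reason is misphrased.
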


\begin{proof}
Observe that $h\left(  u\right)  >q_{0}$ for $u_{0}<u<u_{m}$. It follows that
if $u_{0} < u <u_{m}$ and $q<q_{0}$ on an interval $(-\infty,t)$, then
$w^{\prime}>0$ on this interval. Hence $w>0$ as long as $u_{0}<u\le u_{m}$ and
$q<q_{0}$. (That is, if $u_{0}<u\le u_{m}$ and $q<q_{0}$ on $(-\infty,t]$,
then $w>0$ on this interval.) Since $u^{\prime}>0$ as long as $w=v^{\prime}
\ge0$, Lemma \ref{lem7} implies that $w>0$ as long as $u_{0}<u\le u_{m}$,
proving Lemma \ref{lem2c}.
\end{proof}

Hence the conditions $u\left(  0\right)  =u_{m}$ and $w>0$ on $(-\infty,0]$
determine a unique solution
\[
p_{\varepsilon,c}=\left(  u_{\varepsilon,c},v_{\varepsilon,c},w_{\varepsilon
,c},q_{\varepsilon,c}\right)
\]
on $\mathcal{U}_{\varepsilon,c}^{+}.$

Let
\[
\Omega=\left\{  \left(  u,v,w,q\right)  ~|~0<u<1,0<v<1,\frac{1}{1+\beta
}<q<1\right\}  .
\]
Since $\left(  u_{0},u_{0},0,q_{0}\right)  \in\Omega,$ it follows from
Proposition \ref{prop1} that if $\mathcal{U}_{\varepsilon,c}^{+}$ is a
homoclinic orbit, then it lies entirely in $\Omega.$

In the rest of this subsection let $c_{1}$ and $\varepsilon$ be chosen as in
Theorem \ref{thm1}.

\begin{lemma}
\label{lem3a} If $c\geq c_{1},$ then either $w_{\varepsilon,c}>0$ on $R,$ or
at the first zero of $w_{\varepsilon,c},$ $\ u_{\varepsilon,c}>u_{knee}.$
\end{lemma}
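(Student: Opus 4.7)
Fix $c\ge c_1$ and write $p=p_{\varepsilon,c}=(u,v,w,q)$. If $w>0$ on $\mathbb{R}$ the conclusion holds trivially, so assume $w$ has a first zero at some $\tau$; the task is to show $u(\tau)>u_{knee}$. Gather structural facts on $(-\infty,\tau]$: since $w>0$ there, at any zero of $u'$ one has $u''=w/c>0$ by Proposition~\ref{prop2}, so such a zero is a local minimum and $u'\ge 0$ throughout; Lemma~\ref{lem2c} then gives $u(\tau)\ge u_m$, and Lemma~\ref{lem7} forces $q'<0$ on $(-\infty,\tau)$, whence $q(\tau)<q_0$. At $\tau$ itself, $u'(\tau)\ge 0$ yields $v(\tau)\ge u(\tau)$, and $w'(\tau)\le 0$ yields $v(\tau)\le q(\tau)S(u(\tau))$; if the contradiction hypothesis $u(\tau)\le u_{knee}$ held, these combine to
\[
v(\tau)\le q(\tau)S(u_{knee})<q_0 S(u_{knee}).
\]

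The heart of the argument is then a differential comparison between $p_{\varepsilon,c}$ and the fast-system trajectory $r_{0,c}=(u_{0,c},v_{0,c},w_{0,c})$ on $\mathcal{U}_{0,c}^{+}$ at the same speed $c$. Since $q<q_0$ on $(-\infty,\tau)$, the forcing $qS(u)$ in the $w$-equation is strictly smaller than $q_0 S(u)$ at equal states, and by Lemma~\ref{lem2b} the positive eigenvalue $\lambda_1(c,\varepsilon)$ exceeds $\lambda_1(c,0)$, so the full-system trajectory leaves $p_0$ with a relatively larger $(v-u_0)$- and $w$-component per unit $u-u_0$. Propagating this ordering through the region where $w\ge 0$ and $q<q_0$ should give $v_{\varepsilon,c}(\tau)\ge v_{0,c}(T(c))$ whenever $c<c_0^{\ast}$ (with $T(c)$ the first zero of $w_{0,c}$). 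For $c=c_1$ the hypothesis of Theorem~\ref{thm1} says $v_{0,c_1}(T(c_1))>q_0 S(u_{knee})$, so $v(\tau)\ge v_{0,c_1}(T(c_1))>q_0 S(u_{knee})$, contradicting the displayed inequality above.

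The remaining case $c\in(c_1,c_0^{\ast})$ requires extending the bound $v_{0,c}(T(c))>q_0 S(u_{knee})$ to the whole interval $[c_1,c_0^{\ast})$: one observes that at any violating $c^{\star}$ one would have $v_{0,c^{\star}}(T(c^{\star}))\le q_0 S(u_{knee})$, which via $v_{0,c^{\star}}(T(c^{\star}))\le q_0 S(u_{0,c^{\star}}(T(c^{\star})))$ forces $u_{0,c^{\star}}(T(c^{\star}))\le u_{knee}$; this can be pushed to a contradiction with the final statement of Lemma~\ref{lem2} using continuity in $c$. The case $c\ge c_0^{\ast}$ is easier: by Lemma~\ref{lem1} we have $w_{0,c}>0$ on $\mathbb{R}$, and the same comparison forces $w_{\varepsilon,c}>0$ on $\mathbb{R}$, contradicting the existence of $\tau$. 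The principal obstacle is making the differential comparison between full and fast systems rigorous for a fixed, not necessarily small, $\varepsilon$: it must be anchored at $t=-\infty$ using the spectral information of Lemma~\ref{lem2b} (the two trajectories leave $p_0$ along different unstable eigendirections) and then propagated through the regime $q<q_0$ using the sign of $q-q_0$ as the source of the ordering.
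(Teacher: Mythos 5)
Your overall architecture is the same as the paper's: reduce to showing $u_{\varepsilon,c}(\tau)>u_{knee}$ at the first zero $\tau$ of $w_{\varepsilon,c}$, invoke Lemma~\ref{lem7} to get $q<q_0$ on $(-\infty,\tau)$, and compare $p_{\varepsilon,c}$ against the fast trajectory $r_{0,c}$. But the step you yourself flag as ``the principal obstacle'' is the crux, and the proposal leaves it open. The trajectories $p_{\varepsilon,c}(t)$ and $r_{0,c}(t)$ cannot be compared at a common time $t$: that is precisely what $\lambda_1(c,\varepsilon)>\lambda_1(c,0)$ says --- they depart from $p_0$ at different exponential rates, so after any finite time they are at incomparable points on their respective curves. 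The paper's device, which you need and do not supply, is to reparametrize both trajectories by $v$ in the region where $w>0$: write $u=U(v)$, $w=W(v)$, $q=Q(v)$ for the full system and $u_1=U_1(v)$, $w_1=W_1(v)$ for the fast one, obtaining the autonomous systems (\ref{a1}) and (\ref{a2}). The eigenvector comparison then gives $U<U_1$, $W>W_1$ near $v=u_0$, and a first-failure argument, driven by $Q<q_0$ together with $S$ increasing, propagates both inequalities as long as $W_1\ge 0$. This is what turns ``propagating the ordering'' into a precise statement and yields $v_{\varepsilon,c}$ at its first $w$-zero exceeding $v_{0,c}(T(c))$. Without a change of independent variable (or some equivalent) the ordering you describe has no meaning, so as written the core of the proof is missing.

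A secondary error: for $c\in(c_1,c_0^{\ast})$ you claim that $v_{0,c^{\star}}(T(c^{\star}))\le q_0 S(u_{knee})$ combined with $v_{0,c^{\star}}(T(c^{\star}))\le q_0 S(u_{0,c^{\star}}(T(c^{\star})))$ ``forces'' $u_{0,c^{\star}}(T(c^{\star}))\le u_{knee}$. Two upper bounds on the same quantity give no relation between the two bounding quantities, so this implication does not hold. What is actually available, and what the paper uses, is the fast-system comparison (Lemma~\ref{lem12} in the appendix), which shows that $v_{0,c}(T(c))$ is increasing in $c$ on $(0,c_0^{\ast})$; together with the hypothesis at $c_1$ this gives directly $v_{0,c}(T(c))\ge v_{0,c_1}(T(c_1))>q_0 S(u_{knee})$ for all $c\in[c_1,c_0^{\ast})$, and the contradiction at $T(\varepsilon,c)$ follows as you outline. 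Your treatment of $c\ge c_0^{\ast}$ is fine.
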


\begin{proof}
By the hypotheses on $c_{1}$ in Theorem \ref{thm1}, Lemma \ref{lem2} implies
that
\[
u_{0,c}\left(  T\left(  c\right)  \right)  >u_{knee.}%
\]

To extend this to $\varepsilon>0$ a comparison result is needed. Let
$p=\left(  u,v,w,q\right)  =p_{\varepsilon,c}.$ Lemma \ref{lem7} implies that
if $\varepsilon>0$ then $q<q_{0}$ on any interval $(-\infty,t]$ where $w>0,$
since in such an interval $v^{\prime}>0$ and $u^{\prime}>0.$ Also, as long as
$w>0$ we can consider $u$, $w$, and $q$ as functions of $v.$ Say that
$u=U\left(  v\right)  $, $w=W\left(  v\right)  $, and $q=Q(v)$. Then%
\begin{equation}
\left.
\begin{array}
[c]{c}%
U^{\prime}(v)=\frac{v-U(v)}{cW(v)}\\
W^{\prime}(v)=\frac{b^{2}(v-Q(v)S(U(v))}{W(v)}%
\end{array}
\right.  \label{a1}%
\end{equation}

We compare $w=W\left(  v\right)  $ with the solution when $\varepsilon=0.$ Let
$p_{1}=p_{0,c}.$ Then we can write $u_{1}=U_{1}\left(  v_{1}\right)  $,
$w_{1}=W_{1}\left(  v_{1}\right)  $, and $q=q_{0}$. The equations become%

\begin{equation}
\left.
\begin{array}
[c]{c}%
U_{1}^{\prime}(v)=\frac{v-U_{1}(v)}{cW_{1}(v)}\\
W_{1}^{\prime}(v)=\frac{b^{2}(v-q_{0}S(U_{1}(v))}{W_{1}(v)}%
\end{array}
\right.  \label{a2}%
\end{equation}
Since $\lambda_{1}\left(  c,\varepsilon\right)  >\lambda_{1}\left(
c,0\right)  $ (Lemma \ref{lem2b}), it is seen by considering eigenvectors of
the linearization of (\ref{1}) around $p_{0}$\footnote{The relevant matrix $B$
is given in Appendix B.} that for $v$ sufficiently close to $u_{0}$ (i.e. for
large negative $t$),%
\begin{equation}
\left\{
\begin{array}
[c]{c}%
U\left(  v\right)  <U_{1}\left(  v\right) \\
W\left(  v\right)  >W_{1}\left(  v\right)
\end{array}
\right.  . \label{a3}%
\end{equation}
If, at some first $\hat{v},$ one of these inequalities should fail while the
other still holds, then a contradiction results from comparing (\ref{a1}) and
(\ref{a2}), because $q<q_{0}$ and $S$ is increasing. For example, if
$U(\hat{v})=U_{1}(\hat{v})$ and $W(\hat{v})>W_{1}(\hat{v})>0$, then (\ref{a1})
and (\ref{a2}) imply that $U^{\prime}(\hat{v})<U_{1}^{\prime}(\hat{v})$, a
contradiction because $U<U_{1}$ on $(u_{0},\hat{v})$. Also, if $W(\hat
{v})=W_{1}(\hat{v})\geq0$ and $U(\hat{v})<U_{1}(\hat{v})$ then $W^{\prime
}(\hat{v})>W_{1}^{\prime}(\hat{v})$, since $q<q_{0}$ as long as $w\geq0$. This
is also a contradiction of the definition of $\hat{v}$.

If both inequalities fail at the same $\hat{v},$ then there is still a
contradiction because $q<q_{0}.$ Hence, if $W_{1}(v)\geq0$ for $u_{0}\leq
v\leq\hat{v}$ then (\ref{a3}) holds in this interval.

This implies that for any $c\in\left(  c_{1},c_{0}^{\ast}\right)  ,$ if
$w_{\varepsilon,c}$ has a first zero at $T\left(  \varepsilon,c\right)  ,$
then $v_{\varepsilon,c}\left(  T\left(  \varepsilon,c\right)  \right)
>v_{0,c}\left(  T\left(  0,c\right)  \right)  .$ In the proof of Lemma
\ref{lem2} it is shown that $v_{0,c}\left(  T\left(  0,c\right)  \right)
>v_{0,c_{1}}\left(  T\left(  0,c_{1}\right)  \right)  ,$ and combining these
shows that if $u_{\varepsilon,c}\left(  T\left(  \varepsilon,c\right)
\right)  \leq u_{knee}\ $and $v_{0,c_{1}}\left(  T\left(  0,c_{1}\right)
\right)  >q_{0}S\left(  u_{knee}\right)  $ then $w_{\varepsilon,c}^{\prime
}\left(  T\left(  \varepsilon,c\right)  \right)  >0.$ \ This contradiction
completes the proof of Lemma \ref{lem3a}.
\end{proof}

\begin{lemma}
\label{lem3}If $c>c_{0}^{\ast},$ then $v_{\varepsilon,c}>0$, $w_{\varepsilon
,c}>0,$ $u_{\varepsilon,c}^{\prime}>0,$ and $u_{\varepsilon,c}\rightarrow
\infty$ as $t\rightarrow\infty.$
\end{lemma}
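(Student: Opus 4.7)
The plan is to extend the comparison argument of Lemma \ref{lem3a} into the supercritical regime $c>c_0^{\ast}$, in which the $\varepsilon=0$ trajectory satisfies $w_{0,c}>0$ on all of $\mathbb{R}$. First I would use Lemma \ref{lem2c} to secure a starting interval $(-\infty,\tau]$ on which $w_{\varepsilon,c}>0$. On any such interval the relation $v'=w$ forces $v_{\varepsilon,c}$ to be nondecreasing, and the inequality $v_{\varepsilon,c}>u_{\varepsilon,c}$ inherited from $\mathcal{U}_{\varepsilon,c}^{+}$ (Lemma \ref{lem2b}) persists: a first coincidence $v=u$ at some $t_{0}$ would require $(v-u)'(t_{0})=w(t_{0})-u'(t_{0})=w(t_{0})\le 0$, contradicting $w(t_{0})>0$ (the alternative $w(t_{0})=0$ together with $v=u$ would place $p(t_{0})$ at an equilibrium of the fast system, hence by Condition \ref{c1} at $p_{0}$, forcing $v=u_{0}$, which is impossible once $v$ has exceeded $u_{0}$). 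Therefore $u_{\varepsilon,c}'=(v-u)/c>0$ throughout any interval on which $w_{\varepsilon,c}\ge 0$, and Lemma \ref{lem7} yields $q_{\varepsilon,c}<q_{0}$ there.

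Next, as in Lemma \ref{lem3a}, I would parametrize by $v$ while $w_{\varepsilon,c}>0$, writing $u_{\varepsilon,c}=U(v)$, $w_{\varepsilon,c}=W(v)$, and similarly for the $\varepsilon=0$ orbit $p_{0,c}$, $u_{0,c}=U_{1}(v)$, $w_{0,c}=W_{1}(v)$. Since $c>c_{0}^{\ast}$, Lemma \ref{lem2} guarantees $W_{1}(v)>0$ for every $v\ge u_{0}$, so that $U_{1}$ and $W_{1}$ are defined on the whole half-line $[u_{0},\infty)$. The comparison contradiction used in the proof of Lemma \ref{lem3a} (whose only ingredients are $q_{\varepsilon,c}<q_{0}$ and $S$ increasing) then propagates to give $U(v)<U_{1}(v)$ and $W(v)>W_{1}(v)$ at every $v$ up to any prospective first zero $\hat v$ of $w_{\varepsilon,c}$; but then $W(\hat v)>W_{1}(\hat v)>0$, contradicting $W(\hat v)=0$. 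Hence $w_{\varepsilon,c}>0$ on all of $\mathbb{R}$, which forces $v_{\varepsilon,c}$ strictly increasing (in particular $v_{\varepsilon,c}>u_{0}>0$) and $u_{\varepsilon,c}'>0$ on $\mathbb{R}$.

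Finally, the strictly increasing $v_{\varepsilon,c}$ tends to a limit $v_{\infty}\in(u_{0},\infty]$. I would rule out $v_{\infty}<\infty$ by the standard fact that a bounded orbit of (\ref{1}) for which $v$ converges must, by $v'=w$, have $w\to 0$; forward convergence of $p_{\varepsilon,c}$ would then be to an equilibrium of (\ref{1}), which by Condition \ref{c1} is $p_{0}$, forcing $v_{\infty}=u_{0}$ and contradicting $v_{\varepsilon,c}(\tau)\ge u_{m}>u_{0}$. Therefore $v_{\varepsilon,c}(t)\to\infty$, and from the variation-of-constants representation
\[
u_{\varepsilon,c}(t)=u_{\varepsilon,c}(t_{0})e^{-(t-t_{0})/c}+\frac{1}{c}\int_{t_{0}}^{t}e^{-(t-s)/c}v_{\varepsilon,c}(s)\,ds
\]
it follows at once that $u_{\varepsilon,c}(t)\to\infty$.

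I expect the principal obstacle to be the bookkeeping needed to propagate the $U,W$-comparison globally: one must simultaneously maintain positivity of $W_{1}$ on the whole relevant interval (supplied by $c>c_{0}^{\ast}$ and Lemma \ref{lem2}), the inequality $q_{\varepsilon,c}<q_{0}$ (supplied by Lemma \ref{lem7} as long as $u'\ge 0$), and the initial ordering of $U$ against $U_{1}$ and of $W$ against $W_{1}$ near $v=u_{0}$ (supplied by $\lambda_{1}(c,\varepsilon)>\lambda_{1}(c,0)$ from Lemma \ref{lem2b}). Only once all three ingredients are seen to remain in force throughout the parametrization does the bootstrap $W>W_{1}>0$ close and exclude a finite first zero of $w_{\varepsilon,c}$.
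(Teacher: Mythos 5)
Your proposal is correct and follows essentially the same route as the paper: the paper's own proof of Lemma \ref{lem3} is the one-line remark that it ``follows from Lemma \ref{lem2} and the comparison used to prove Lemma \ref{lem3a},'' which is precisely the $U,W$ comparison you carry out, closed by the fact that $W_{1}(v)>0$ on all of $(u_{0},\infty)$ when $c>c_{0}^{\ast}$. Your fleshed-out version (the ordering $v>u$, Lemma \ref{lem7} to keep $q<q_{0}$, the positivity bootstrap $W>W_{1}>0$, and the variation-of-constants formula for $u$) is just the detail the paper leaves implicit.
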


\begin{proof}
This follows from Lemma \ref{lem2} and the comparison used to prove Lemma
\ref{lem3a}.
\end{proof}

\ \ We are now ready to apply a \textquotedblleft shooting\textquotedblright%
\ argument to obtain the fast pulse. Still with $\varepsilon$ as in Theorem
\ref{thm1}, for each $c>0$ let $p=p_{\varepsilon,c}$ and set%

\begin{align*}
&  \Lambda=\left\{  c\geq c_{1}~|~\text{There exist }t_{1},\text{ }%
t_{2}\text{, and }t_{3}\text{\ such that }0<t_{1}<t_{2}<t_{3}\text{ and
}\right. \\
&  \text{if }p=p_{\varepsilon,c}\text{ then }u^{\prime}>0\text{ on }%
[0,t_{1}),\text{ }u^{\prime}\left(  t_{1}\right)  =0,~u\left(  t_{2}\right)
=u_{0},\text{\ and either }u\left(  t_{3}\right)  =0\text{ or }q\left(
t_{3}\right)  =q_{0}.\\
&  \left.  \text{Further, }u^{\prime\prime}\left(  t_{1}\right)  <0,\text{
}u^{\prime}<0\text{ on }(t_{1},t_{2}]\text{ and }u<u_{0}\text{ on }%
(t_{2},t_{3}].\right\}
\end{align*}
\medskip

(See Figures \ref{fig2a}.)

\begin{center}
\begin{figure}[h]
\includegraphics[height=1.5 in, width =4  in]{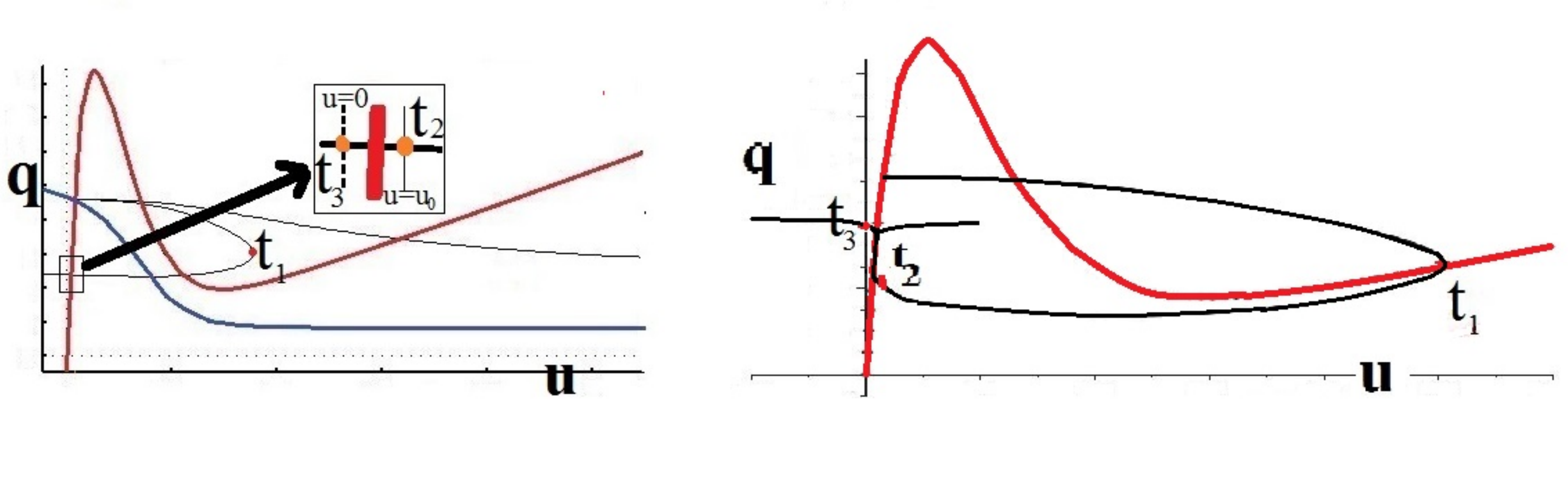}\caption{Each
figure shows one solution with $c\in\Lambda$ and one solution with
$c\protect\notin\Lambda$.}%
\label{fig2a}%
\end{figure}
\end{center}

\begin{lemma}
\label{lem5} $\Lambda$ is an open subset of the half line $c\geq c_{1}$.
\end{lemma}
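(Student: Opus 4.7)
The plan is to establish openness of $\Lambda$ by combining continuous dependence of $p_{\varepsilon,c}$ on $c$ with the implicit function theorem applied at each of the three defining events. Fix $c_0\in\Lambda$ with witnessing times $\tau_1<\tau_2<\tau_3$. By Lemma~\ref{lem2b} the branch $\mathcal{U}_{\varepsilon,c}^{+}$ depends continuously on $c$, and the normalization $u_{\varepsilon,c}(0)=u_m$ from Lemma~\ref{lem2c} fixes the time parametrization, so $p_{\varepsilon,c}$ (together with its $t$-derivatives of any fixed order) converges to $p_{\varepsilon,c_0}$ uniformly on $[0,T]$ for any fixed $T>\tau_3$ as $c\to c_0$. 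The task is to produce continuous maps $c\mapsto t_i(c)$, $i=1,2,3$, near $c_0$, at which all of the properties required by the definition of $\Lambda$ hold.

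At $\tau_1$ the event is a non-degenerate critical point: $u^\prime_{\varepsilon,c_0}(\tau_1)=0$ together with $u^{\prime\prime}_{\varepsilon,c_0}(\tau_1)<0$. The implicit function theorem applied to $u^\prime_{\varepsilon,c}(t)=0$ produces a continuous $t_1(c)$, at which $u^{\prime\prime}<0$ by continuity; the sign $u^\prime_{\varepsilon,c}>0$ on $[0,t_1(c))$ follows from uniform positivity of $u^\prime_{\varepsilon,c_0}$ on $[0,\tau_1-\eta]$ combined with strict concavity of $u_{\varepsilon,c}$ in a neighborhood of $t_1(c)$. At $\tau_2$ the hypothesis $u^\prime_{\varepsilon,c_0}(\tau_2)<0$ (strict, built into the definition of $\Lambda$) makes $u_{\varepsilon,c}(t)=u_0$ a transverse level crossing, so the implicit function theorem supplies $t_2(c)$; uniform negativity of $u^\prime_{\varepsilon,c_0}$ on $[\tau_1+\eta,\tau_2]$ propagates to $u^\prime_{\varepsilon,c}<0$ on $(t_1(c),t_2(c)]$. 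These two steps are essentially routine.

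The delicate step is the disjunctive event at $\tau_3$: either $u_{\varepsilon,c_0}(\tau_3)=0$ or $q_{\varepsilon,c_0}(\tau_3)=q_0$. The alternative $q_{\varepsilon,c_0}(\tau_3)=q_0$ is automatically transverse, because the $q$-equation in (\ref{1}), the equilibrium identity $1-q_0-\beta q_0 S(u_0)=0$, the strict inequality $u_{\varepsilon,c_0}(\tau_3)<u_0$, and the monotonicity of $S$ together force $q^\prime_{\varepsilon,c_0}(\tau_3)>0$. The alternative $u_{\varepsilon,c_0}(\tau_3)=0$ is transverse precisely when $v_{\varepsilon,c_0}(\tau_3)\ne 0$, since $u^\prime=v/c$ when $u=0$; and because $u$ approaches $0$ from above, $v_{\varepsilon,c_0}(\tau_3)\le 0$ automatically. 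In either transverse subcase the implicit function theorem furnishes $t_3(c)$, and the strict inequalities $0<u_{\varepsilon,c}<u_0$ and $q_{\varepsilon,c}<q_0$ required on $(t_2(c),t_3(c))$ persist by uniform convergence, so $c\in\Lambda$. The main obstacle I anticipate is the degenerate tangential touch $u_{\varepsilon,c_0}(\tau_3)=v_{\varepsilon,c_0}(\tau_3)=0$ with $q_{\varepsilon,c_0}(\tau_3)<q_0$: here the $u=0$ crossing need not persist under perturbation of $c$, and one must either rule out this configuration for $c_0\in\Lambda$ (for example, by inspecting higher $t$-derivatives at $\tau_3$ via Proposition~\ref{prop2}) or continue the perturbed trajectory beyond $\tau_3$ and relocate $t_3(c)$ as the next boundary crossing of $\{u=0\}\cup\{q=q_0\}$.
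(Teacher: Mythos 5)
Your overall strategy---transversality of each of the three defining events via the implicit function theorem plus continuous dependence of $p_{\varepsilon,c}$ on $c$---is exactly the paper's, and your treatment of $t_1$, $t_2$, and the $q=q_0$ alternative at $t_3$ matches what is needed. However, you explicitly flag the degenerate tangency $u_{\varepsilon,c_0}(\tau_3)=v_{\varepsilon,c_0}(\tau_3)=0$ as an unresolved obstacle and only gesture at two possible fixes without carrying either out. That is a genuine gap: without ruling this case out, the $u=0$ crossing is not known to persist, and the proof of openness does not close.

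The paper closes precisely this gap, and the needed observation is short. Along the orbit one has
\[
v''=w'=b^{2}\bigl(v-qS(u)\bigr)<0\qquad\text{whenever }v\le 0,
\]
since $q>0$ and $S>0$; so $v$ is strictly concave wherever $v\le 0$. Now suppose $u(t_3)=0$. Since $u>0$ on $(t_2,t_3)$ and $u(t_3)=0$, necessarily $u'(t_3)\le 0$, i.e.\ $v(t_3)\le u(t_3)=0$. If in fact $v(t_3)=0$, then $u'(t_3)=0$, which forces $u''(t_3)=w(t_3)/c\ge 0$ (else $u$ would have a strict local maximum value $0$, contradicting $u>0$ just before $t_3$); hence $v'(t_3)=w(t_3)\ge 0$. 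But $v>0$ on a left-neighborhood of $t_3$ and $v(t_3)=0$ give $v'(t_3)\le 0$, so $v'(t_3)=0$, and then $v''(t_3)=-b^{2}q(t_3)S(0)<0$ would make $v$ negative just before $t_3$, a contradiction. Therefore $v(t_3)<0$, hence $u'(t_3)=v(t_3)/c<0$, and the crossing is transverse. In short, your first suggested remedy (examine higher derivatives at $\tau_3$) is the right one, and the concavity inequality $v''<0$ for $v\le 0$ is the tool that makes it work; once this is inserted, your argument is complete and agrees with the paper's.
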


\begin{proof}
Suppose that $c\in\Lambda,$ and choose $t_{3}=t_{3}\left(  c\right)  $ as in
the definition of $\Lambda.$ Note from (\ref{1}) that if $u_{c}\left(
t_{3}\right)  =0$ then there is a $\tau<t_{3}$ such that $v\left(
\tau\right)  =0,$ $v^{\prime}\left(  \tau\right)  \leq0.$ Also, $v^{\prime
\prime}<0$ if $v\leq0.$ Hence $v\left(  t_{3}\right)  <0$ and $u^{\prime
}\left(  t_{3}\right)  <0.$

Also, (\ref{1}) implies that if $q(t_{3})=q_{0}$ and $u(t_{3})<u_{0}$ then
$q^{\prime}(t_{3})>0$. Since $p_{c}(t)$ is a smooth function of $c$, uniformly
for $t$ in, say, $(-\infty,t_{3}\left(  c^{\ast}\right)  +1],$ it follows that
for $c$ in some neighborhood of $c^{\ast}$, $t_{i}\left(  c\right)  $ is
defined for $i=1,..3$ and all the inequalities in the definition of $\Lambda$
\ continue to hold, so that this neighborhood lies in $\Lambda.$ This proves
Lemma \ref{lem5}.
\end{proof}

The hypotheses of Theorem \ref{thm1} imply that $c_{1}\in\Lambda$, while by
Lemma \ref{lem3}, if $c>c_{0}^{\ast},$ then $c\notin\Lambda.$ The numbers
$t_{i}$ depend on $c,$ and when we need to emphasize this we will denote them
by $t_{i}\left(  c\right)  ,$ for $i=1,2,3$.

We now let $c^{\ast}=\sup\Lambda.$ (This is finite, by Lemma \ref{lem3}.)

\begin{lemma}
\label{lem4}With $\varepsilon$ as in Theorem \ref{thm1}, $\mathcal{U}%
_{\varepsilon,c^{\ast}}^{+}$ is a homoclinic orbit of (\ref{1}).
\end{lemma}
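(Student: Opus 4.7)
Since $\Lambda$ contains $c_1$ by the hypothesis of Theorem \ref{thm1}, is open (Lemma \ref{lem5}), and is disjoint from $(c_0^{*},\infty)$ (Lemma \ref{lem3}), the value $c^{*}=\sup\Lambda$ lies in $[c_1,c_0^{*}]\setminus\Lambda$. Writing $p^{*}=p_{\varepsilon,c^{*}}=(u^{*},v^{*},w^{*},q^{*})$, the goal is to show $p^{*}(t)\to p_0$ as $t\to\infty$. My plan is a shooting argument in the style flagged after Lemma \ref{lem5}: pick $c_n\nearrow c^{*}$ with $c_n\in\Lambda$ and track the associated critical times $t_i(c_n)$ for $i=1,2,3$. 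Continuous dependence (Lemma \ref{lem2b}) gives $p_{\varepsilon,c_n}\to p^{*}$ uniformly on compact intervals, and after passing to a subsequence each sequence $t_i(c_n)$ either converges to some finite $t_i^{*}$ or diverges to $+\infty$. I then rule out every non-homoclinic scenario for $p^{*}$.

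First I would pin down $t_1$. If $t_1(c_n)\to\infty$, then $u^{*\prime}\geq 0$ on all of $\mathbb{R}$, so $u^{*}\geq u_0$ everywhere. Lemma \ref{lem7} then gives $q^{*\prime}<0$ throughout, so $q^{*}$ is monotone and bounded, hence convergent. Passing to the limit in the equations forces $p^{*}(\infty)$ to be an equilibrium, which by Condition \ref{c1} must be $p_0$; but a nontrivial orbit on the unstable manifold of $p_0$ cannot also be a monotone return to $p_0$ with $u^{*}\geq u_0$, a contradiction. The alternative $u^{*}\to\infty$ is excluded by the same comparison with the $\varepsilon=0$ front used in the proof of Lemma \ref{lem3a}, combined with $c^{*}\leq c_0^{*}$ and Lemmas \ref{lem1}--\ref{lem2}. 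Hence $t_1(c_n)\to t_1^{*}<\infty$ with $u^{*\prime}(t_1^{*})=0$ and $u^{*\prime\prime}(t_1^{*})\leq 0$. To rule out the degenerate case $u^{*\prime\prime}(t_1^{*})=0$, I would exploit the derivative cascade in Proposition \ref{prop2}: if $u^{*\prime}=u^{*\prime\prime}=0$ at $t_1^{*}$, then $u^{*\prime\prime\prime}=v^{*\prime\prime}/c^{*}$ and further vanishings would eventually force $u^{*(iv)}(t_1^{*})=-(b^{2}/c^{*})q^{*\prime}(t_1^{*})S(u^{*}(t_1^{*}))$, whose sign is fixed by Proposition \ref{prop0} and Lemma \ref{lem7}. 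The resulting local structure contradicts either $u^{*\prime}\geq 0$ on $(-\infty,t_1^{*}]$ or the fact that the strict inequalities defining $\Lambda$ would then persist on a full two-sided neighborhood of $c^{*}$.

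Now consider $t_2(c_n)$. If $t_2(c_n)\to\infty$, then $u^{*\prime}\leq 0$ on $(t_1^{*},\infty)$ with $u^{*}\geq u_0$; so $u^{*}$ decreases to a limit $L\geq u_0$, and the same asymptotic-to-equilibrium analysis forces $L=u_0$ and $p^{*}(\infty)=p_0$, yielding the desired homoclinic. Otherwise $t_2(c_n)\to t_2^{*}<\infty$ with $u^{*}(t_2^{*})=u_0$; a tangential crossing $u^{*\prime}(t_2^{*})=0$ is ruled out by the same Proposition \ref{prop2} chain as at $t_1^{*}$, so $u^{*\prime}(t_2^{*})<0$. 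Finally consider $t_3(c_n)$. If it converges to some $t_3^{*}<\infty$, continuity of the strict inequalities in the definition of $\Lambda$ forces $c^{*}\in\Lambda$, contradicting openness. Hence $t_3(c_n)\to\infty$, so $0<u^{*}<u_0$ and $q^{*}<q_0$ on all of $(t_2^{*},\infty)$. In this positively invariant bounded region (Proposition \ref{prop1}), one checks directly that $q^{*\prime}>0$ (since $u^{*}<u_0$ and $q^{*}<q_0=1/(1+\beta S(u_0))$), so $q^{*}$ is again monotone. The resulting asymptotic-equilibrium argument, combined with Condition \ref{c1}, forces $p^{*}(\infty)=p_0$, completing the proof.

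The main obstacle is the non-degeneracy analysis at $t_1^{*}$ and $t_2^{*}$: ruling out a tangential or inflectional critical behavior requires careful use of the derivative identities in Proposition \ref{prop2} combined with the sign information from Proposition \ref{prop0} and Lemma \ref{lem7}, and one must verify that such degeneracies would produce robust two-sided persistence of the $\Lambda$-defining inequalities, contradicting either openness of $\Lambda$ or $c^{*}=\sup\Lambda$. A secondary technical point is the asymptotic-equilibrium argument in the bounded semi-orbit cases, since the system is four-dimensional and lacks Poincaré--Bendixson; here the monotonicity of $q^{*}$ in the relevant regions (via Lemma \ref{lem7} or the direct sign computation for $q^{*\prime}$) acts as a Lyapunov-like quantity that forces convergence to the unique equilibrium $p_0$.
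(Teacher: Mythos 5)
Your overall shooting framework—exhibit $c^{\ast}=\sup\Lambda$, then rule out every non-homoclinic behavior of $p_{c^{\ast}}$—is the same as the paper's, but the sketch is missing the paper's central technical device, and one of your reduction steps actually points in the wrong direction.

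The decisive tool the paper uses, and which your proposal does not supply, is the backward-flow invariance argument (the paper's Lemma \ref{lem5a}). Several of the boundary configurations that can arise for $p_{c^{\ast}}$—for instance $u'=v'=0$, $w'\leq 0$, $q'\leq 0$ at a point with $u\geq u_{0}$, or $u'=0$, $v'<0$, $w'=q'=0$, or a tangency at $u=u_{0}$ with $q'>0$—are \emph{not} ruled out by the ``robust two-sided persistence of the $\Lambda$-defining inequalities'' you invoke. Indeed they are exactly the configurations at which the inequalities defining $\Lambda$ can hold with one or more strict signs replaced by equalities, so a persistence argument has nothing to bite on. The paper instead time-reverses the system and shows that from any such configuration the backward orbit has components that grow monotonically, so it cannot have come from $p_{0}$; hence the configuration is impossible for a point on $\mathcal{U}_{\varepsilon,c^{\ast}}^{+}$. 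This is what closes the case analysis at $\tau_{1}$ (first zero of $q'$), at the re-crossing of $u_{0}$, and in the final bounded regime, and your proposal has no substitute for it.

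Separately, your treatment of the case $t_{1}(c_{n})\to\infty$ has a directional error. You propose to exclude $u^{\ast}\to\infty$ ``by the same comparison with the $\varepsilon=0$ front used in the proof of Lemma \ref{lem3a}.'' But that comparison shows $W(\varepsilon,c)>W_{1}(0,c)$ and $U(\varepsilon,c)<U_{1}(0,c)$ while $w>0$, i.e. the $\varepsilon>0$ solution has the \emph{larger} $w$ at each $v$; this makes escape to infinity easier, not harder, and so cannot rule it out. The paper's argument for this case is different and simpler: if $u_{c^{\ast}}'>0$ on $\mathbb{R}$ then (using the uniqueness of the equilibrium) at some $\tau$ one has $v_{c^{\ast}}(\tau)>u_{c^{\ast}}(\tau)>1$ with $v_{c^{\ast}}'(\tau)>0$; these strict inequalities persist for $c$ near $c^{\ast}$, and Proposition \ref{prop1} then traps those orbits in $v>1$, which excludes them from $\Lambda$, contradicting $c^{\ast}=\sup\Lambda$. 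You should replace your comparison step with this open-escape argument. A smaller omission: ruling out the degenerate $u^{\ast\prime\prime}(t_{1}^{\ast})=0$ case in the paper is not a pure derivative-cascade computation; it also uses $u_{c^{\ast}}(t_{1})\geq u_{knee}$ (from Lemma \ref{lem3a}) together with the monotonicity of $h=u/S(u)$ past the knee and Lemma \ref{lem7}, none of which appear in your sketch of that step.
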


\begin{proof}
The proof depends on the fact that $c^{\ast}$is a boundary point of $\Lambda$
and lies in $\left(  c_{1},c_{0}^{\ast}\right)  .$ In Figure 4 we show several
graphs of $\left(  u,q\right)  $ which, if they occurred for $p_{\varepsilon
,c},$ would suggest (without quite implying) that $c$ was on the boundary of
$\Lambda.$ \ We must eliminate these and some other possibilities, and this
will imply that $p_{\varepsilon,c^{\ast}}\left(  \infty\right)  =\left(
u_{0},u_{0},0,q_{0}\right)  .$ The reader may want to review the definition of
$\Lambda$ before examining these figures.\medskip

\begin{figure}[h]
\includegraphics[height=1. in, width =5.5  in]{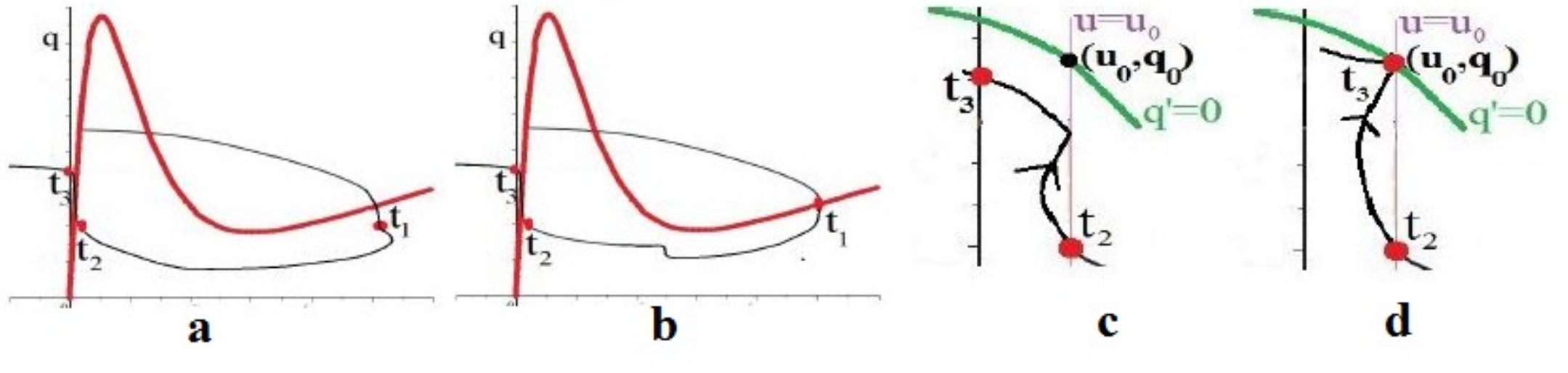}
\label{figcc}\caption{Several possibilities if $c$ is on the boundary of
$\Lambda$}%
\end{figure}

\medskip

We need additional lemmas. Recalling that $\varepsilon$ was chosen in the
statement of \ref{thm1}, we will now drop the $\varepsilon$-dependence of
$p_{\varepsilon,c}$ and its components from our notation, writing $p_{c}$ and
$\Lambda$. When the dependence of $p_{c}$ on $c$ is not crucial to an argument
we will use $u,v,w,q$ for its components.

\begin{lemma}
\label{lem5a}Suppose that $p=\left(  u,v,w,q\right)  $ is a non-constant
solution of (\ref{1}) satisfying one of the following sets of conditions at
some $\tau:$%
\begin{align*}
\text{(i)}~u^{\prime}\left(  \tau\right)   &  =0,\text{ }v^{\prime}\left(
\tau\right)  =0,~w^{\prime}\left(  \tau\right)  \leq0,\text{ }q^{\prime
}\left(  \tau\right)  \leq0,~u\left(  \tau\right)  \geq u_{0}\\
\text{(ii)}~u^{\prime}\left(  \tau\right)   &  =0,~v^{\prime}\left(
\tau\right)  <0,~w^{\prime}\left(  \tau\right)  =0\text{, }q^{\prime}\left(
\tau\right)  =0,~u\left(  \tau\right)  \geq u_{0}\\
\text{(iii)}~u^{\prime}\left(  \tau\right)   &  =0,~v^{\prime}\left(
\tau\right)  \leq0,~q^{\prime}\left(  \tau\right)  >0,~u\left(  \tau\right)
=u_{0}.\\
\text{(iv)~}u^{\prime}\left(  \tau\right)   &  =0,~v^{\prime}\left(
\tau\right)  \leq0,~w^{\prime}\left(  \tau\right)  >0,~q^{\prime}\left(
\tau\right)  \geq0,~u\left(  \tau\right)  \geq u_{0}%
\end{align*}
Then $p\left(  \tau\right)  \notin\mathcal{U}_{\varepsilon,c^{\ast}}^{+}.$
\end{lemma}

\begin{proof}
Suppose that (i) holds. Then $w^{\prime}\left(  \tau\right)  $ and $q^{\prime
}\left(  \tau\right)  $ cannot both vanish. If $w^{\prime}\left(  \tau\right)
=0$ and $q^{\prime}\left(  \tau\right)  <0$ then $w^{\prime\prime}\left(
\tau\right)  =-b^{2}q^{\prime}\left(  \tau\right)  S\left(  u\left(
\tau\right)  \right)  >0.$ Hence in some interval $(\tau-\delta,\tau)$,
\begin{equation}
w^{\prime}<0\text{ and }q^{\prime}<0. \label{6aa}%
\end{equation}
If $q^{\prime}\left(  \tau\right)  =0$ and $w^{\prime}\left(  \tau\right)  <0$
then
\begin{align*}
q^{\prime\prime}\left(  \tau\right)   &  =-\frac{\varepsilon}{c}\beta q\left(
\tau\right)  S^{\prime}\left(  u\left(  \tau\right)  \right)  u^{\prime
}\left(  \tau\right)  =0\\
q^{\prime\prime\prime}\left(  \tau\right)   &  =-\frac{\varepsilon}{c^{2}%
}\beta q\left(  \tau\right)  S^{\prime}\left(  u\left(  \tau\right)  \right)
v^{\prime}\left(  \tau\right)  =0\\
q^{\left(  iv\right)  }\left(  \tau\right)   &  =-\frac{\varepsilon}{c^{2}%
}\beta q\left(  \tau\right)  S^{\prime}\left(  u\left(  \tau\right)  \right)
w^{\prime}\left(  \tau\right)  >0.
\end{align*}
Once again we see that (\ref{6aa}) holds on some interval $(\tau-\delta,\tau)$.

Consider the \textquotedblleft backward\textquotedblright\ system satisfied by
$P\left(  s\right)  =p\left(  \tau-s\right)  .$ If $P=\left(  U,V,W,Q\right)
$ then
\begin{equation}
\left.
\begin{array}
[c]{c}%
U^{\prime}=\frac{U-V}{c}\\
V^{\prime}=-W\\
W^{\prime}=b^{2}\left(  QS\left(  U\right)  -V\right) \\
Q^{\prime}=\frac{\varepsilon}{c}\left(  Q+\beta QS\left(  U\right)  -1\right)
\end{array}
\right.  . \label{6}%
\end{equation}
Also,
\begin{equation}
U^{\prime}\left(  0\right)  =0,V^{\prime}\left(  0\right)  =0,~W^{\prime
}\left(  0\right)  \geq0,\text{ and }Q^{\prime}\left(  0\right)  \geq0.
\label{6a}%
\end{equation}

From (\ref{6aa}) and (\ref{6a}) it follows that on some interval $0<s<\delta
$,
\begin{equation}
U^{\prime}>0,V^{\prime}<0,W^{\prime}>0\text{ and }Q^{\prime}>0. \label{7}%
\end{equation}
We claim that these inequalities hold for all $s>0.$ If, on the contrary, one
of them fails at a first $s_{0}>0,$ then
\begin{equation}
U\left(  s_{0}\right)  >U\left(  0\right)  ,V\left(  s_{0}\right)  <V\left(
0\right)  ,W\left(  s_{0}\right)  >W\left(  0\right)  ,\text{ and }Q\left(
s_{0}\right)  >Q\left(  0\right)  . \label{8}%
\end{equation}
But (\ref{8}), (\ref{6}), and (\ref{6a}) imply that at $s_{0},$ all of the
inequalities in (\ref{7}) still hold, because $S^{\prime}>0.$ This
contradiction implies that $U$, $W$, and $Q$ continue to increase, and $V$
continues decrease on $0<s<\infty,$ and in particular, $U$ does not tend to
$u_{0}$ as $s\rightarrow\infty.$ \ Thus, $p\left(  \tau\right)  \notin%
\mathcal{U}_{\varepsilon,c^{\ast}}^{+}.$

The proofs in cases (ii), (iii) and (iv) are similar and left to the reader.
\ This completes the proof of Lemma \ref{lem5a}.
\end{proof}

We now begin our study of the properties of $p_{c\ast}.$ Lemma \ref{lem5a}
will assist us in proving the following result.

\begin{lemma}
\label{lem6}The number $t_{1}\left(  c^{\ast}\right)  $ is still defined, as
the first zero of $u_{c^{\ast}}^{\prime},$ and $u_{c^{\ast}}^{\prime\prime
}\left(  t_{1}\left(  c^{\ast}\right)  \right)  <0.$ Either $\mathcal{U}%
_{\varepsilon,c^{\ast}}^{+}$ is homoclinic or $t_{2}\left(  c^{\ast}\right)  $
is still defined, as the first zero of $u-u_{0}$. Also, if $\mathcal{U}%
_{\varepsilon,c^{\ast}}^{+}$ is not homoclinic then $u_{c^{\ast}}^{\prime}<0$
on $(t_{1},t_{2}].$
\end{lemma}

\begin{remark}
This lemma eliminates the graph in Figure \ref{figcc}-a.
\end{remark}

\begin{proof}
Suppose that $t_{1}\left(  c^{\ast}\right)  $ is not defined. Then
$u_{c^{\ast}}^{\prime}>0$ on $\left(  -\infty,\infty\right)  .$ \ Since
$p_{0}$ is the only equilibrium point of (\ref{1}), this implies that for some
$\tau$, $v_{c^{\ast}\left(  \tau\right)  }>u_{c^{\ast}}\left(  \tau\right)
>1$ and $v_{c^{\ast}}^{\prime}\left(  \tau\right)  >0.$ Then these
inequalities hold at $\tau$ for nearby $c,$ and by Proposition \ref{prop1},
$v_{c}\left(  t\right)  >1$ for $t>\tau.$ Hence $u_{c}>1$ on $[\tau,\infty)$
and so $c\notin\Lambda,$ contradicting the definition of $c^{\ast}.$ Therefore
$t_{1}\left(  c^{\ast}\right)  $ is defined.

We now show that $u_{c^{\ast}}^{\prime\prime}\left(  t_{1}\right)  <0.$ Again
assume that $p=p_{c^{\ast}},$ and suppose that $u^{\prime\prime}\left(
t_{1}\right)  =0.$ If $u^{\prime\prime\prime}\left(  t_{1}\right)  <0$ then
$t_{1}$ is a local maximum of $u^{\prime},$ which is not possible because
$t_{1}$ is the first zero of $u^{\prime}.$ \ Hence at $t_{1}$, $u^{\prime
\prime\prime}=\frac{b^{2}}{c^{\ast}}\left(  v-qS\left(  u\right)  \right)
\geq0.$ If $u^{\prime\prime\prime}\left(  t_{1}\right)  =0,$ then at $t_{1},$
\[
u^{\left(  iv\right)  }=\frac{w^{\prime\prime}}{c}=-\frac{b^{2}}{c}q^{\prime
}S\left(  u\right)  >0,
\]
by Lemma \ref{lem7}. This implies that $u^{\prime}$ changes sign from negative
to positive at $t_{1},$ again a contradiction of the definition of $t_{1}.$
Hence at $t_{1},$
\[
u^{\prime\prime\prime}=\frac{b^{2}}{c^{\ast}}\left(  v-qS\left(  u\right)
\right)  >0,
\]
or $q\left(  t_{1}\right)  <\frac{v\left(  t_{1}\right)  }{S\left(  u\left(
t_{1}\right)  \right)  }=\frac{u\left(  t_{1}\right)  }{S\left(  u\left(
t_{1}\right)  \right)  }$, since $u^{\prime}\left(  t_{1}\right)  =0.$ Also,
$u^{\prime\prime}>0$ in some interval $\left(  t_{1},t_{1}+\delta\right)  .$
However $u$ is bounded by $1$ and does not tend to a limit above $u_{0}.$
Therefore $u^{\prime}$ changes sign at some $\tau>t_{1}\left(  c^{\ast
}\right)  $. Since $u^{\prime}\geq0$ on $(-\infty,\tau],$ $v\geq u$ on this
interval. At $\tau$, $u^{\prime\prime}\leq0,$ and so there is a point $\sigma$
in $\left(  t_{1},\tau\right)  $ such that $u^{\prime\prime}=0$ and
$u^{\prime\prime\prime}=\frac{v^{\prime\prime}-u^{\prime\prime}}{c}%
=\frac{w^{\prime}}{c}\leq0.$ Hence at $\sigma,$ $q\geq\frac{v}{S\left(
u\right)  }\geq\frac{u}{S\left(  u\right)  }.$ \ But by Lemma \ref{lem2a}
$u\left(  t_{1}\right)  \geq u_{knee},$ and $h\left(  u\right)  =\frac
{u}{S\left(  u\right)  }$ is increasing in $\left(  u_{knee,},\infty\right)
,$ so $q\left(  \sigma\right)  >q\left(  t_{1}\right)  ,$ again a
contradiction of Lemma \ref{lem7}. We have therefore proved the first sentence
of Lemma \ref{lem6}.

\begin{lemma}
\label{lem8}If $p=p_{c^{\ast},}$ then $u^{\prime}<0$ as long after $t_{1}$ as
$q^{\prime}\leq0.$
\end{lemma}

\begin{proof}
Suppose instead that there is a first $\tau>t_{1}$ such that $q^{\prime}\leq0$
on $(-\infty,\tau]$ but $u^{\prime}\left(  \tau\right)  =0$. Then
$u^{\prime\prime}\left(  \tau\right)  \geq0.$ First consider the case
$q^{\prime}<0$ on $(-\infty,\tau].$

If $u^{\prime\prime}\left(  \tau\right)  >0$ then $u_{c^{\ast}}^{\prime}$
changes from negative to positive before $q^{\prime}=0,$ and this will be true
as well for $c$ close to $c^{\ast},$ contradicting the definition of $c^{\ast
}.$ Hence suppose that $u^{\prime\prime}\left(  \tau\right)  =\frac{v^{\prime
}\left(  \tau\right)  }{c^{\ast}}=0.$ If $u^{\prime\prime\prime}>0,$ then
$u^{\prime}$ has a local minimum at $\tau,$ contradicting the definition of
$\tau.$ \ Hence%
\[
u^{\prime\prime\prime}\left(  \tau\right)  =\frac{w^{\prime}\left(
\tau\right)  }{c^{\ast}}\leq0.
\]
But now the conditions in (i) of Lemma \ref{lem5a} are satisfied, giving a contradiction.

We have left to consider the case that $q^{\prime}\left(  \tau\right)
=u^{\prime}\left(  \tau\right)  =0.$ Then $q^{\prime\prime}\left(
\tau\right)  =0.$ If $u^{\prime\prime}\left(  \tau\right)  >0$ then
$q^{\prime\prime\prime}\left(  \tau\right)  <0$ so $q^{\prime}<0$ in an
interval $\left(  \tau,\tau+\delta\right)  .$ Hence in this case, $u^{\prime}$
changes sign (from negative to positive) before $q^{\prime}>0.$ For $c$ close
to $c^{\ast}$ there are two possibilities: either $u_{c}^{\prime}$ changes
sign from negative to positive before $q_{c}^{\prime}>0,$ and so before
$u=u_{0}$, or else $u_{c}^{\prime}<0$ in a neighborhood of $\tau,$ but in a
neighborhood of, say, $\tau+\frac{1}{2}\delta$, $u_{c}^{\prime}>0$ and
$u>u_{0}$. (See Figure \ref{figure3}) Neither of these possibilities occurs if
$c\in\Lambda,$ so once again, $c^{\ast}\notin\partial\Lambda,$ a
contradiction. This proves Lemma \ref{lem8}.
\end{proof}

\begin{figure}[h]
\includegraphics[height=2 in, width =2  in]{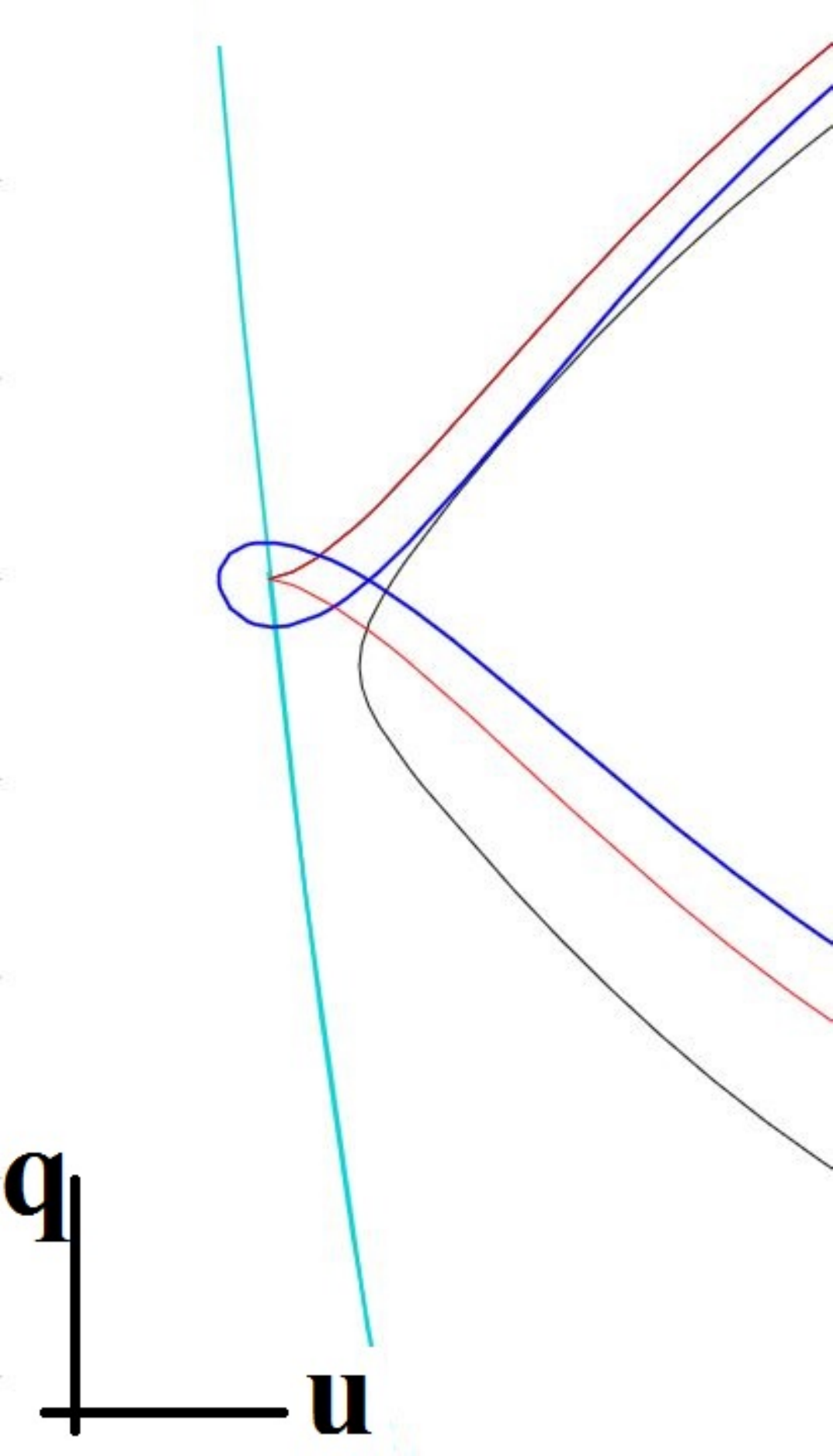}\caption{$(u_{c}%
,q_{c})$ for $c=c^{\ast}$ (with cusp), and two other solutions with
$c\protect\notin\Lambda$. The graph of a decreasing function is part of the
$q$ nullcline.}%
\label{figure3}%
\end{figure}

It follows that there is a first $\tau_{1}>t_{1}$ such that $q_{c^{\ast}%
}^{\prime}\left(  \tau_{1}\right)  =0.$ Also, $u_{c^{\ast}}^{\prime}\left(
\tau_{1}\right)  <0,$ and (equivalently by the fourth item of Proposition
\ref{prop2}) $q_{c^{\ast}}^{\prime\prime}\left(  \tau_{1}\right)  >0.$

\begin{lemma}
\label{lem9}$q_{c^{\ast}}^{\prime}>0$ and $u_{c^{\ast}}^{\prime}<0$ as long
after $\tau_{1}$ as $u_{c^{\ast}}\geq u_{0}.$
\end{lemma}

\begin{proof}
This lemma eliminates the graph in Figure \ref{figcc}-b.

Let $p=p_{c^{\ast}}.$ Since $q^{\prime\prime}\left(  \tau_{1}\right)  >0,$
$q^{\prime}>0$ and $u^{\prime}<0$ on some interval $(\tau_{1},\tau_{1}%
+\delta]$ with $\delta>0.$ We claim that $q^{\prime}>0$ on any such
half-closed interval in which $u^{\prime}<0.$ This follows because, by
Proposition \ref{prop2}, $q^{\prime\prime}>0$ at any point where $q^{\prime
}=0$ and $u^{\prime}<0.$

We next show that $u^{\prime}<0$ on any interval $(\tau_{1},\tau_{1}+\delta]$
in which $q^{\prime}>0$ and $u\geq u_{0}.$ If not, then there is a first
$\sigma>\tau_{1}$ with $u^{\prime}\left(  \sigma\right)  =0,$ $q^{\prime
}\left(  \sigma\right)  >0$ and $u\geq u_{0}$ on $(-\infty,\sigma].$ Then
$u^{\prime\prime}\left(  \sigma\right)  \geq0.$ \ If $u^{\prime\prime}\left(
\sigma\right)  >0,$ then $u_{c^{\ast}}^{\prime}>0$ in some interval $\left(
\sigma,\sigma+\delta\right)  .$ In this case, for $c$ close enough to
$c^{\ast},$ $u_{c}^{\prime}$ changes sign after $t_{1}$ but before
$u_{c}<u_{0}$ or else $u_{c}$ crosses $u_{0}$ and back again, and such $c$
cannot lie in $\Lambda,$ a contradiction.

Hence, $u^{\prime\prime}\left(  \sigma\right)  =0.$ \ But then, because
$u\left(  \sigma\right)  =v\left(  \sigma\right)  ,$
\[
u^{\prime\prime\prime}\left(  \sigma\right)  =\frac{b^{2}}{c^{\ast}}\left(
v-qS\left(  u\right)  \right)  =\frac{b^{2}}{c^{\ast}}\left(  u-qS\left(
u\right)  \right)  .
\]
In the region where $q^{\prime}>0$ and $u\geq u_{0},$ $q<\frac{u}{s\left(
u\right)  }.$ Hence, $u^{\prime\prime\prime}\left(  \sigma\right)  >0$ and
again $u_{c^{\ast}}^{\prime}>0$ in an interval to the right of $\sigma$ but
before $u_{c^{\ast}}<u_{0},$ a contradiction as before.

The only other possibility contradicting Lemma \ref{lem9} is that there is a
first $\tau>\tau_{1}\left(  c^{\ast}\right)  $ where $u_{c^{\ast}}\left(
\tau\right)  \geq u_{0}$ and $q_{c^{\ast}}^{\prime}\left(  \tau\right)
=u_{c^{\ast}}^{\prime}\left(  \tau\right)  =0.$ We consider two cases: (a)
\ $q_{c^{\ast}}\left(  \tau\right)  <q_{0}$ and $u_{c^{\ast}}\left(
\tau\right)  >u_{0},$ and (b) $q_{c^{\ast}}\left(  \tau\right)  =q_{0}%
,~u_{c^{\ast}}\left(  \tau\right)  =u_{0}.$ First consider (a). In an interval
$\left(  \tau-\delta,\tau\right)  ,$ $q_{c^{\ast}}^{\prime}>0$, $u_{c^{\ast}%
}^{\prime}<0,$ and $u_{c^{\ast}}>u_{0},$ and so at $\tau,$ if $p=p_{c^{\ast}%
},$ then
\[
u^{\prime}=0,~u^{\prime\prime}\geq0,~q^{\prime}=0,~q^{\prime\prime}=0.
\]
Also,
\[
q^{\prime\prime\prime}=-\frac{\varepsilon}{c}\beta qS^{\prime}\left(
u\right)  u^{\prime\prime}\leq0.
\]
But $u_{c^{\ast}}^{\prime\prime}(\tau)>0$ is impossible because it means that
even for nearby $c,$ $u_{c}^{\prime}>0$ after $t_{1}$ but before $u=u_{0}.$
Therefore at $\tau$, $q^{\prime\prime\prime}=0$ and $u^{\prime\prime}=0.$
Then
\[
q^{\left(  iv\right)  }=-\frac{\varepsilon}{c}qS^{\prime}\left(  u\right)
u^{\prime\prime\prime}.
\]
But on the nullcline $q^{\prime}=0,$ with $u>u_{0}$, $u^{\prime}=0,$ and
$u^{\prime\prime}=0,$
\begin{equation}
u^{\prime\prime\prime}=\frac{v^{\prime\prime}}{c^{\ast}}=\frac{b^{2}}{c^{\ast
}}\left(  v-qS\left(  u\right)  \right)  =\frac{b^{2}}{c^{\ast}}\left(
u-qS\left(  u\right)  \right)  >0. \label{4}%
\end{equation}
This implies that $u^{\prime}$ has a local minimum at $\tau$, whereas we know
that $u^{\prime}<0$ in $\left(  t_{1},\tau\right)  .$ This contradicts the
definition of $\tau.$

Turning to case (b), we now have that at $\tau,$
\[
u^{\prime}=0,q^{\prime}=0,q^{\prime\prime}=0,\left(  u,q\right)  =\left(
u_{0},q_{0}\right)  .
\]

Thus $w^{\prime}\left(  \tau\right)  =0.$ If $u^{\prime\prime}\left(
\tau\right)  >0$ then $u^{\prime}>0$ to the right of $\tau.$ As before, if $c$
is close to $c^{\ast}$ then either $u_{c}$ crosses $u_{0}$ twice, or $p_{c}$
doesn't reach the region $u<u_{0}$ before $u^{\prime}>0,$ both of which mean
that $c\notin\Lambda$.

If $u^{\prime\prime}\left(  \tau\right)  =\frac{w\left(  \tau\right)  }{c}=0$
then $p\left(  \tau\right)  $ is again an equilibrium point. The third
possibility, $u^{\prime\prime}\left(  \tau\right)  =\frac{v^{\prime}\left(
\tau\right)  }{c^{\ast}}<0$ implies that (ii) of Lemma \ref{lem5a} is
satisfied, and thus again gives a contradiction. This completes the proof of
Lemma \ref{lem9}.
\end{proof}

If $u>u_{0}$ on $R$ then $u_{c^{\ast}}^{\prime}<0$ and $q_{{\ast}}^{\prime}>0$
on $(\tau_{1},\infty)$, and $\mathcal{U}_{\varepsilon,c^{\ast}}^{+}$ is
homoclinic. This proves Lemma \ref{lem6}.
\end{proof}

Thus, for $p=p_{c^{\ast}}$ if $\mathcal{U}_{\varepsilon,c^{\ast}}^{+}$ is not
homoclinic then $t_{2}$ exists with $u\left(  t_{2}\right)  =u_{0}$ and
$u^{\prime}<0$ on $(t_{1},t_{2}].$ However, there is no $t_{3}$ such that
$u<u_{0}$ on $(t_{2},t_{3}]$ and either $u\left(  t_{3}\right)  =0$ or
$q(t_{3})=q_{0}$, for otherwise $c^{\ast}\in\Lambda,$ and this has already
been ruled out.

Therefore if $t>t_{2}\left(  c^{\ast}\right)  $ then $0<u_{c^{\ast}}\leq
u_{0}$ and $q_{c^{\ast}}\leq q_{0},$ for otherwise nearby values of $c$ are
once again not in $\Lambda.$ If $\mathcal{U}_{\varepsilon,c^{\ast}}^{+}$ is
not homoclinic and $p=p_{c^{\ast}}$, then there must be a first $\tau
>t_{2}\left(  c^{\ast}\right)  $ with $u\left(  \tau\right)  =u_{0}$,
$u^{\prime}\left(  \tau\right)  =0,$ $q\left(  \tau\right)  \leq q_{0}$ and
$u^{\prime\prime}\left(  \tau\right)  \leq0.$

Suppose that this is the case and also $q\left(  \tau\right)  <q_{0}.$ (This
is pictured in Figure \ref{figcc}-c.)

Then $q^{\prime}\left(  \tau\right)  >0.$ If $u^{\prime\prime}\left(
\tau\right)  \leq0,$ then (iii) of Lemma (\ref{lem5a}) applies and gives a
contradiction. Hence $q(\tau)=q_{0}.$ Then at $\tau,$ $q^{\prime}=u^{\prime
}=w^{\prime}=0$ and $u^{\prime\prime}=\frac{v^{\prime}}{c^{\ast}}<0.$ (This is
pictured in Figure \ref{figcc}-d. \ But this is case (ii) of Lemma \ref{lem5a}
and so also impossible. \ 

We have established that if $\mathcal{U}_{\varepsilon,c^{\ast}}^{+}$ is not
homoclinic (with $u>u_{0}$ on $R$ ) then for large $t,$ $0<u_{c^{\ast}}\left(
t\right)  <u_{0}$ and $q_{c^{\ast}}^{\prime}>0.$ This is only possible if
$\mathcal{U}_{\varepsilon,c^{\ast}}^{+}$ is homoclinic (with $q_{c^{\ast}%
}<q_{0}$ and $u_{c^{\ast}}^{\prime}>0$ for large $t$). This proves Lemma
\ref{lem4}.
\end{proof}

To complete the proof of Theorem \ref{thm1} we look for a second homoclinic
orbit, with $c<c_{1}.$

\subsection{The slow pulse}

\ Again we adapt the method in \cite{hmc}. It is stated so as to be useful in
the proofs of Theorems \ref{thm2} and \ref{thm3}, as well as Theorem
\ref{thm1}.

\begin{lemma}
\label{lem10}There are $\hat{c}>0$ and $M>0,$ both independent of
$\varepsilon,$ such that if $0<c<\tilde{c}$ and $\frac{\varepsilon}{c}>M$ then
the solution $p_{c}$ remains in the region $v>u$ on $\left(  -\infty
,\infty\right)  ,$ and $u$ crosses $u=1.$
\end{lemma}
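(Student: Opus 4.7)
The plan is to show that when $c$ is small and $\mu := \varepsilon/c$ is large, the orbit $p_c$ is a small perturbation of the reduced flow on the slow manifold $\mathcal{M}_0 = \{u = v,\ q = q_{\mathrm{eq}}(u)\}$, where $q_{\mathrm{eq}}(u) := 1/(1+\beta S(u))$. On $\mathcal{M}_0$ the $(v,w)$ dynamics is the planar Hamiltonian system
\[
v' = w,\qquad w' = b^2\bigl(v - g(v)\bigr),\qquad g(v) := \frac{S(v)}{1+\beta S(v)},
\]
whose unstable manifold at the saddle $(u_0,0)$ has $w>0$ everywhere and escapes monotonically to $v=+\infty$. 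The first reduction is the identity $\{v > u\} = \{u' > 0\}$: since $v-u = cu'$, a first zero of $u'$ at some $\tau$ would force $u''(\tau) = w(\tau)/c \le 0$, so $w(\tau)\le 0$. Hence it is enough to produce $w > 0$ on all of $\mathbb{R}$, and Lemmas \ref{lem2b}--\ref{lem2c} already give this on the initial stretch through $u = u_m$.

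To propagate $w>0$ I would use $\mu \ge M$ to make $q$ track $q_{\mathrm{eq}}(u)$. Rewriting the fourth equation of (\ref{1}) as
\[
q' = -\mu\bigl(1+\beta S(u)\bigr)\bigl(q - q_{\mathrm{eq}}(u)\bigr),
\]
the deviation $r := q - q_{\mathrm{eq}}(u)$ satisfies $r' + \mu(1+\beta S(u))\, r = -q_{\mathrm{eq}}'(u)\, u'$. A Gronwall estimate, bootstrapped with $u'\approx w$ bounded on the orbit (which is confined by Proposition \ref{prop1} to $0<u,v<1$ until $v$ exits through $v=1$), gives $|r| = O(1/\mu)$ plus an exponentially small transient. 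Similarly $|v-u| = c|u'| = O(c)$. Substituting into $w' = b^2(v - qS(u))$ yields
\[
w' = b^2\bigl(v - g(v)\bigr) + O(c) + O(1/M)
\]
uniformly on the trapping region.

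By Condition \ref{c1}, the equation $v = g(v)$ has only the solution $v = u_0$, and since $g$ is bounded while the identity is not, $v - g(v) > 0$ for all $v > u_0$. Therefore the reduced unstable orbit starting at $(u_0,0)$ has $w \ge w_\star > 0$ on any interval $[u_0+\delta,1]$ and reaches $v = 1$ in uniformly bounded time. Choosing $\hat c$ small and $M$ large so that the $O(c)+O(1/M)$ error is below $w_\star/2$, a standard continuity argument forces the full orbit to satisfy $w \ge w_\star/2$ on $v \in [u_0+\delta,1]$, so $v$ crosses $1$; since $u = v - cu'$, $u$ crosses $1$ as well.

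The main obstacle is the matching near $p_0$, where $v - g(v)$ vanishes and the reduced lower bound on $w$ degenerates, so the outer perturbation argument cannot be started at $(u_0,0)$ itself. I would resolve this by using the linearization of (\ref{1}) at $p_0$ from Lemma \ref{lem2b} to follow the unstable branch $\mathcal{U}^+_{\varepsilon,c}$ until the orbit exits a fixed small neighborhood of $p_0$ through $\{v = u_0 + \delta\}$ with $w \ge w_1 > 0$ uniformly in $(\varepsilon,c)$, and then to apply the outer comparison with the reduced flow on $[u_0+\delta,1]$. Both $\hat c$ and $M$ are selected from this matching and depend only on $b,\beta,$ and $S$, hence are independent of $\varepsilon$.
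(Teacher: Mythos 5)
Your proposal and the paper's proof share the same essential structure: reduce the claim to showing $w>0$ everywhere (since $v-u=cu'$, a first zero of $u'$ would force $w\le 0$ there); show that for small $c$ the orbit stays in $|v-u|<c\bar w$ (so $u\approx v$) and that for large $\varepsilon/c$ the $q$-variable tracks the nullcline $q_{\mathrm{eq}}(u)=1/(1+\beta S(u))$; and then compare the $(v,w)$-dynamics to the planar Hamiltonian system $v''=b^2 Z(v)$ with $Z(v)=v-S(v)/(1+\beta S(v))$, where Condition~\ref{c1} forces $Z>0$ away from $u_0$, so $w$ stays positive and $v$ crosses $1$. The paper's proof is terser: it invokes Lemma~\ref{lem14} to confine $|w|$, gets $u-v\to 0$ and $q-q_{\mathrm{eq}}(u)\to 0$ uniformly as $c\to 0$ and $\varepsilon/c\to\infty$, and then finishes with a compactness argument built around a fixed transit time $\hat t$, whereas you make the tracking of $q$ explicit via a Gronwall estimate and produce a quantitative lower bound $w\ge w_\star/2$.

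The one place your proposal adds genuinely unneeded machinery is the ``matching near $p_0$.'' The paper sidesteps the degeneracy at $v=u_0$ entirely by working with the normalized solution $p_c$ satisfying $u(0)=u_m$, and Lemma~\ref{lem2c} — which you yourself cite in the second paragraph — already guarantees $w>0$ on $(-\infty,0]$. So the outer estimate need only be run from $t=0$, where $v(0)>u_m$ is bounded away from $u_0$ and $Z$ is bounded below; there is no boundary layer to control, and no need to extract a uniform-in-$(\varepsilon,c)$ exit value $w_1$ from the linearization at $p_0$. Your added matching step is not wrong, but it duplicates what Lemma~\ref{lem2c} and the normalization already give you. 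Also, in place of the extra claim that the full orbit satisfies $w\ge w_\star/2$ (which would require a genuine finite-time tracking argument, not just smallness of the error in $w'$), the simpler observation — used in the paper — that $w'\ge b^2\min_{[u_m,1]}Z - \text{error}>0$ forces $v$ to cross $1$ in a uniformly bounded time is both cleaner and sufficient.
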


\begin{proof}
Since the proof uses some of the easier parts of the proof of Lemma
\ref{lem2}, it is included in the appendix.
\end{proof}

From here to the end of this section the parameters $\varepsilon$ and $c_{1}$
remain as in the previous subsection. Lemma \ref{lem10} implies that if we
extend $\Lambda$ to $\left(  0,c_{0}^{\ast}\right)  ,$ with otherwise the same
definition as above, then $\inf\Lambda>0.$ This suggests that $\inf\Lambda$
corresponds to a homoclinic orbit. The problem with this argument is that the
concept of \textquotedblleft front\textquotedblright\ in the sense used in the
method of geometric perturbation, breaks down for small $c.$ The slow
homoclinic orbit is not close, even up to the first zero of $u_{c}^{\prime},$
to the front found when $\varepsilon=0.$ More precisely, our proof of the
first sentence of Lemma \ref{lem6}\ is no longer valid, because we cannot
assert that the first zero of $u_{c}^{\prime}$ occurs with $u_{c}>u_{knee}.$
Hence we must modify our \textquotedblleft shooting set\textquotedblright\ on
the $c$ axis. This requires several steps.

Our argument from here no longer refers to a point $t_{1}$ where
$u_{c}^{\prime}$ changes sign, but instead considers solutions such that
$q_{c}^{\prime}$ changes sign. Let
\begin{align*}
\Sigma &  =\left\{  c\in(0,c_{1}]~|~\text{There is a }\tau_{1}>0\text{ such
that }q_{c}^{\prime}<0\text{ on }\left(  -\infty,\tau_{1}\right)  ,\right. \\
&  \left.  \text{ }q_{c}^{\prime}\left(  \tau_{1}\right)  =0\text{, and }%
u_{c}^{\prime}\left(  \tau_{1}\right)  <0\right\}  .
\end{align*}
Our argument does not require that if $c\in\Sigma$ then $u_{c}^{\prime}$ has
only one zero in $\left(  -\infty,\tau_{1}\right)  ,$ though numerically this
appears to be the case.
\[
\Sigma_{1}=\left\{  c\in\Sigma~|~q_{c}^{\prime}>0\text{ on any interval
}\left(  \tau_{1},T\right)  \text{ in which }u_{c}>0\text{ and }q_{c}%
<q_{0}\right\}  .
\]
Recall that $\varepsilon$ and $c_{1}$ were chosen so that $u_{c_{1}}^{\prime}$
has a unique zero. As in the proof of Lemma \ref{lem9}, this implies that
$q_{c_{1}}^{\prime}$ has a unique zero, say $\tau_{c_{1}},$ and $u_{c_{1}%
}^{\prime}\left(  \tau_{c_{1}}\right)  <0.$ Hence $c_{1}\in\Sigma_{1}$. Also,
Lemma \ref{lem10} shows that there is an interval $\left(  0,c_{2}\right)  $
which contains no points of $\Sigma.$

Let
\[
c_{3}=\sup\left\{  c<c_{1}~|~c\notin\Sigma\right\}  .
\]

\begin{lemma}
\label{lem11}There is a $\tau_{1}$ such that $q_{c_{3}}^{\prime}<0$ on
$\left(  -\infty,\tau_{1}\right)  ,$ $q_{c_{3}}^{\prime}\left(  \tau
_{1}\right)  =0,$ $q_{c_{3}}^{\prime\prime}\left(  \tau_{1}\right)  =0,$ and
$q_{c_{3}}^{\prime\prime\prime}\left(  \tau_{1}\right)  <0.$
\end{lemma}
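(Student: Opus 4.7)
The plan hinges on $c_{3}$ being a boundary point of $\Sigma$. First I would verify that $c_{3}\in(c_{2},c_{1})$ and $c_{3}\notin\Sigma$. The set $\Sigma$ is open in $(0,c_{1}]$: for $c\in\Sigma$, Proposition~\ref{prop2} gives $q_{c}''(\tau_{1}(c))=-\frac{\varepsilon}{c}\beta q(\tau_{1}(c))S'(u(\tau_{1}(c)))u_{c}'(\tau_{1}(c))>0$ since $u_{c}'(\tau_{1}(c))<0$, so the zero of $q_{c}'$ at $\tau_{1}(c)$ is transversal and all strict inequalities in the definition of $\Sigma$ persist under small perturbations of $c$ by continuous dependence of $p_{c}$ on $c$. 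Since $c_{1}\in\Sigma$ we have $c_{3}<c_{1}$; Lemma~\ref{lem10} gives $c_{3}\geq c_{2}>0$; and closedness of the complement forces $c_{3}\notin\Sigma$, so for every $c\in(c_{3},c_{1}]$, $c\in\Sigma$.

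I would then take a sequence $c_{n}\downarrow c_{3}$, all in $\Sigma$. The first technical point is to show that the first zeros $\tau_{1}(c_{n})$ are uniformly bounded --- I anticipate arguing this from the fact that $p_{c_{n}}$ remains in the bounded region $\Omega$ while $q_{c_{n}}'<0$, together with a uniform estimate on the time needed for $q_{c_{n}}$ to reach the nullcline $q=\frac{1}{1+\beta S(u)}$. A subsequential limit $\tau_{1}$ then exists; by continuous dependence $q_{c_{3}}'(\tau_{1})=0$, $u_{c_{3}}'(\tau_{1})\leq 0$, and $q_{c_{3}}'\leq 0$ on $(-\infty,\tau_{1})$ as a limit of strict inequalities. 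Replacing $\tau_{1}$ by the first zero of $q_{c_{3}}'$ if necessary, I may take $q_{c_{3}}'<0$ on $(-\infty,\tau_{1})$.

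Because $c_{3}\notin\Sigma$, the condition defining $\Sigma$ fails at this first zero: either $q_{c_{3}}'$ touches zero tangentially (giving $q_{c_{3}}''(\tau_{1})=0$ directly), or $u_{c_{3}}'(\tau_{1})\geq 0$; combined with $u_{c_{3}}'(\tau_{1})\leq 0$ from above, the second alternative also forces $u_{c_{3}}'(\tau_{1})=0$, and Proposition~\ref{prop2} then yields $q_{c_{3}}''(\tau_{1})=0$ automatically. Either way, $u_{c_{3}}'(\tau_{1})=q_{c_{3}}'(\tau_{1})=q_{c_{3}}''(\tau_{1})=0$, and Proposition~\ref{prop2} gives
\[
q_{c_{3}}'''(\tau_{1})=-\frac{\varepsilon}{c_{3}^{2}}\beta q(\tau_{1})S'(u(\tau_{1}))\,w_{c_{3}}(\tau_{1}).
\]
It remains to show $w_{c_{3}}(\tau_{1})>0$.

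This last step is the main obstacle. For $c>c_{3}$ in $\Sigma$, since $u_{c}'>0$ near $-\infty$ and $u_{c}'(\tau_{1}(c))<0$, the function $u_{c}$ has a prior strict local maximum at some $t_{1}(c)<\tau_{1}(c)$ where $w_{c}(t_{1}(c))\leq 0$. My strategy is to show that $t_{1}(c)$ and $\tau_{1}(c)$ do \emph{not} coalesce as $c\to c_{3}^{+}$, so that $t_{1}(c)\to t_{1}^{\ast}<\tau_{1}$ and $\tau_{1}$ is a strict local minimum of $u_{c_{3}}$ with $u_{c_{3}}''(\tau_{1})>0$, giving $w_{c_{3}}(\tau_{1})=c_{3}u_{c_{3}}''(\tau_{1})>0$. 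Ruling out coalescence is delicate: if $t_{1}(c)\to\tau_{1}$, then $u_{c_{3}}'(\tau_{1})=u_{c_{3}}''(\tau_{1})=0$, producing a higher-order degenerate critical point of $u_{c_{3}}$; I expect to exploit the cascade of identities in Proposition~\ref{prop2} (expressions for $u^{\prime\prime\prime}$, $q^{(iv)}$, and so on) to contradict the supremum definition of $c_{3}$, by showing that coalescence would force a one-sided neighborhood of $c_{3}$ to lie in $\Sigma$.
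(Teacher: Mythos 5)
Your setup — working with $c_3$ as a boundary point of $\Sigma$, using the openness/persistence of the condition $q_c''(\tau_1(c))>0$ (equivalently $u_c'(\tau_1(c))<0$), passing to a limit to produce $\tau_1$ with $q_{c_3}'(\tau_1)=q_{c_3}''(\tau_1)=u_{c_3}'(\tau_1)=0$ — matches the first part of the paper's argument in spirit, and the use of Proposition~\ref{prop2} to convert $q''=0$ into $u'=0$ is exactly right. Minor remarks on that part: the paper gets the existence of a first zero of $q_{c_3}'$ not via a compactness/limit argument on $\tau_1(c_n)$, but by showing that $q_{c_3}'<0$ on all of $R$ would force $u_{c_3}$ to cross $1$ with $u'>0$, and then by continuity nearby $c$ would also have $q_c'<0$ on $R$, contradicting $\sup$; and it eliminates $q_{c_3}''(\tau_1)>0$ directly with the implicit function theorem rather than first establishing openness of $\Sigma$. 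These are cosmetic differences.

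The genuine gap is the last step, which you yourself flag as "the main obstacle." You correctly reduce it to proving $w_{c_3}(\tau_1)>0$ (via $q_{c_3}'''=-\tfrac{\varepsilon}{c_3^2}\beta q S'(u)\,w$), but your proposed route — ruling out coalescence of $t_1(c)$ and $\tau_1(c)$ as $c\downarrow c_3$ and then inferring $u_{c_3}''(\tau_1)>0$ — does not work as stated. Non-coalescence of those two zeros gives you a point $t_1^\ast<\tau_1$ where $u_{c_3}'=0$; it gives you no information about the \emph{sign} of $u_{c_3}''(\tau_1)$, because $u_{c_3}'$ could vanish to higher order at $\tau_1$ regardless of what happens at $t_1^\ast$ (the degeneracy at $\tau_1$ concerns the simultaneous vanishing of $q'$ and $u'$, not a double root of $u'$). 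Also, you would still need to rule out coalescence, which you only "expect" to do.

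The paper's argument is more direct and avoids the $w(\tau_1)>0$ detour entirely: it tests the two remaining possibilities against the strict inequality $q_{c_3}'<0$ on $(-\infty,\tau_1)$. If $q_{c_3}'''(\tau_1)>0$, then $\tau_1$ is a local minimum of $q_{c_3}'$ and hence $q_{c_3}'\ge 0$ just to the left of $\tau_1$, a contradiction. If $q_{c_3}'''(\tau_1)=0$, then Proposition~\ref{prop2} gives $v_{c_3}'(\tau_1)=w_{c_3}(\tau_1)=0$, and one more differentiation gives $q_{c_3}^{(iv)}(\tau_1)=-\tfrac{\varepsilon}{c_3^2}\beta q S'(u)\,w_{c_3}'(\tau_1)$; here $w_{c_3}'(\tau_1)=b^2\bigl(u_{c_3}(\tau_1)-q_{c_3}(\tau_1)S(u_{c_3}(\tau_1))\bigr)>0$ because $q_{c_3}'(\tau_1)=0$ places the point on the $q$-nullcline, which for $u>u_0$ lies strictly below $q=h(u)$ (uniqueness of the equilibrium, Condition~\ref{c1}). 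Hence $q_{c_3}^{(iv)}(\tau_1)<0$, and a Taylor expansion of $q_{c_3}'$ about $\tau_1$ again forces $q_{c_3}'>0$ immediately to the left of $\tau_1$, the same contradiction. This geometric input — the relative position of the two nullclines — is the ingredient your sketch is missing, and it replaces the unproven "non-coalescence" claim.
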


\begin{proof}
If $q_{c_{3}}^{\prime}<0$ on $R$ then there is a $\sigma>0$ such that
$u_{c_{3}}\left(  \sigma\right)  =1$ and $u_{c_{3}}^{\prime}\left(
\sigma\right)  >0.$ From the continuity of $p_{c}\left(  t\right)  $ with
respect to $c,$ the same is true for $u_{c}$ if $c$ is sufficiently close to
$c_{3}.$ \ In particular, again $q_{c}^{\prime}<0$ on $\left(  -\infty
,\infty\right)  $. But then $c\notin\Sigma,$ contradicting the definition of
$c_{3}$.

Therefore a first $\tau_{1}$ is defined such that $q_{c_{3}}^{\prime}\left(
\tau_{1}\right)  =0$. Then $q_{c_{3}}^{\prime\prime}\left(  \tau_{1}\right)
\geq0.$ Also, by Proposition \ref{prop2}, $q_{c_{3}}^{\prime\prime}\left(
\tau_{1}\right)  =-\beta S^{\prime}\left(  u_{c_{3}}\left(  \tau_{1}\right)
\right)  u_{c_{3}}^{\prime}\left(  \tau_{1}\right)  .$ If $q_{c_{3}}%
^{\prime\prime}\left(  \tau_{1}\right)  >0$ then by the implicit function
theorem, $\tau_{1}\left(  c\right)  $ is defined for nearby $c$ as the first
zero of $q_{c}^{\prime},$ with $q_{c}^{\prime\prime}\left(  \tau_{1}\left(
c\right)  \right)  >0$ and $q_{c}^{\prime}<0$ on $\left(  -\infty,\tau
_{1}\left(  c\right)  \right)  ,$ contradicting the definition of $c_{3}.$
Hence $q_{c_{3}}^{\prime\prime}\left(  \tau_{1}\right)  =u_{c_{3}}^{\prime
}\left(  \tau_{1}\right)  =0$. If $q_{c_{3}}^{\prime\prime\prime}\left(
\tau_{1}\right)  >0$ then $\tau_{1}$ is a local minimum of $q_{c_{3}}^{\prime
},$ contradicting the definition of $\tau_{1}.$ If $q_{c_{3}}^{\prime
\prime\prime}\left(  \tau_{1}\right)  =0$ then $q_{c_{3}}^{iv}\left(  \tau
_{1}\right)  =-\frac{\varepsilon}{c_{3}}\beta q_{c_{3}}\left(  \tau
_{1}\right)  S^{\prime}\left(  u_{c_{3}}\left(  \tau_{1}\right)  \right)
w_{c_{3}}^{\prime}\left(  \tau_{1}\right)  ,$ and since $q_{c_{3}}^{\prime
}\left(  \tau_{1}\right)  =0$ and $q_{c_{3}}\left(  \tau_{1}\right)  <q_{0},$
$w_{c_{3}}^{\prime}(\tau_{1})>0$ and $q_{c_{3}}^{iv}\left(  \tau_{1}\right)
<0.$ This implies that $q_{c_{3}}^{\prime}>0$ on an interval $\left(  \tau
_{1}-\delta,\tau_{1}\right)  ,$ again a contradiction. Hence $q_{c_{3}%
}^{\prime\prime\prime}\left(  \tau_{1}\right)  <0,$ completing the proof of
Lemma \ref{lem11}.
\end{proof}

Thus, $q_{c_{3}}^{\prime}<0$ in some interval $\left(  \tau_{1},\tau
_{1}+\delta\right)  .$ This result implies that $c_{3}\notin\Sigma.$ However
the interval $(c_{3},c_{1}]\subset\Sigma.$ Lemma \ref{lem11} also implies that
points in $(c_{3},c_{1}]$ near to $c_{3}$ are not in $\Sigma_{1},$ since the
corresponding solutions on $\mathcal{U}_{\varepsilon,c}^{+}$ must have a
change of sign of $q_{c}^{\prime}$ from positive to negative after $\tau
_{1}\left(  c\right)  .$ Let
\[
c_{\ast}=\inf\left\{  c>c_{3}~|~c\in\Sigma_{1}\right\}  .
\]
We claim that $\mathcal{U}_{c_{\ast}}^{+}$ is a homoclinic orbit.

The proof uses techniques very similar to those above. First observe that
$c_{\ast}>c_{3}$ and $c_{\ast}\in\Sigma.$ Therefore $\tau_{1}=\tau_{1}\left(
c_{\ast}\right)  $ is defined as in the definition of $\Sigma.$ Then use the
following result.

\begin{lemma}
\label{lemslow2}If $c\in\Sigma_{1},$ then $u_{c}^{\prime}<0$ on any interval
$\left[  \tau_{1}\left(  c\right)  ,\tau_{1}\left(  c\right)  +\delta\right]
$ in which $u_{c}\geq u_{0}.$
\end{lemma}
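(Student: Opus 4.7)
The plan is to assume for contradiction that the lemma fails, so there is a first $\sigma>\tau_1=\tau_1(c)$ with $u_c'(\sigma)=0$ and $u_c\ge u_0$ on $[\tau_1,\sigma]$. Then $u_c'<0$ on $(\tau_1,\sigma)$, $u_c''(\sigma)\ge 0$, $v_c(\sigma)=u_c(\sigma)\ge u_0$, and $w_c(\sigma)=c\,u_c''(\sigma)\ge 0$. The first step is to rule out $q_c=q_0$ on this interval. If $q_c(\tau_2)=q_0$ at some first $\tau_2\in(\tau_1,\sigma]$, then $q_c'(\tau_2)\ge 0$ combined with the monotonicity of the $q$-nullcline $q=1/(1+\beta S(u))$ forces $u_c(\tau_2)\le u_0$, and together with $u_c\ge u_0$ this gives $u_c(\tau_2)=u_0$; since $u_c'<0$ strictly on $(\tau_1,\sigma)$, we must have $\tau_2=\sigma$. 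Then $(u_c,v_c,q_c)(\sigma)=(u_0,u_0,q_0)$, and either $w_c(\sigma)=0$ (so $p_c(\sigma)=p_0$, contradicting non-constancy) or $w_c(\sigma)>0$, in which case Proposition \ref{prop2} yields $q_c'(\sigma)=q_c''(\sigma)=0$ and $q_c'''(\sigma)=-(\varepsilon/c^2)\beta q_0 S'(u_0)w_c(\sigma)<0$, so $q_c'(t)<0$ on a left neighborhood of $\sigma$, contradicting $q_c'>0$ on $(\tau_1,\sigma)$ supplied by $\Sigma_1$. Thus $q_c<q_0$ on $[\tau_1,\sigma]$ and $c\in\Sigma_1$ gives $q_c'>0$ on $(\tau_1,\sigma]$.

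I would then case-split on $u_c''(\sigma)$. If $u_c''(\sigma)=0$, then $v_c'(\sigma)=w_c(\sigma)=0$ and $w_c'(\sigma)=b^2\bigl(u_c(\sigma)-q_c(\sigma)S(u_c(\sigma))\bigr)$. When $w_c'(\sigma)>0$, the conditions $u_c'=v_c'=0$, $w_c'>0$, $q_c'>0$, $u_c\ge u_0$ at $\tau=\sigma$ satisfy hypothesis (iv) of Lemma \ref{lem5a} and yield the contradiction. When $w_c'(\sigma)=0$, one computes $w_c''(\sigma)=-b^2 q_c'(\sigma)S(u_c(\sigma))<0$, so $u_c^{(iv)}(\sigma)<0$ while $u_c'(\sigma)=u_c''(\sigma)=u_c'''(\sigma)=0$; the Taylor expansion of $u_c'$, in the spirit of the argument used in Lemma \ref{lem7}, forces $u_c'(t)>0$ for $t$ just less than $\sigma$, contradicting $u_c'<0$ on $(\tau_1,\sigma)$. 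The remaining sub-case $w_c'(\sigma)<0$ (equivalently $q_c(\sigma)>h(u_c(\sigma))$) needs an extra argument: I would adapt the persistence-of-inequalities scheme used in the proof of Lemma \ref{lem5a} to the backward system at $\sigma$, now with the modified starting signs $V'(0)=0$, $W'(0)>0$, $Q'(0)<0$, to show that $(U,V,W,Q)$ cannot converge to $p_0$ as $s\to\infty$.

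The main obstacle is the case $u_c''(\sigma)>0$, in which $u_c$ has a strict local minimum at $\sigma$ and $v_c'(\sigma)=w_c(\sigma)>0$, so Lemma \ref{lem5a} cannot be applied at $\sigma$. The plan here is to continue the orbit past $\sigma$: $u_c$ begins to increase while $c\in\Sigma_1$ keeps $q_c'>0$ throughout the strip $\{u_c>0,q_c<q_0\}$. Using Proposition \ref{prop1} to confine the orbit, I would argue that there is a first $\tau''>\sigma$ at which either (a) $u_c'(\tau'')=0$ with $u_c(\tau'')\ge u_0$ and $u_c''(\tau'')\le 0$, so $v_c'(\tau'')=w_c(\tau'')\le 0$; or (b) $q_c$ meets the $q$-nullcline while $u_c>u_0$ and $q_c<q_0$, giving $q_c'(\tau'')=0$ inside the strip and immediately contradicting $c\in\Sigma_1$. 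In case (a) the key remaining step is $w_c'(\tau'')>0$, i.e.\ $q_c(\tau'')<h(u_c(\tau''))$; this I would prove by combining $q_c(\tau'')<1/(1+\beta S(u_c(\tau'')))$ (from $q_c'>0$) with the geometric fact, visible in Figure \ref{figurea} and guaranteed in a right neighborhood of $u_0$ by Conditions \ref{c0a} and \ref{c1} together with Proposition \ref{prop0}, that $h(u)\ge 1/(1+\beta S(u))$ on the relevant range of $u\ge u_0$. Lemma \ref{lem5a}(iv) then applies at $\tau''$ and yields the contradiction. The delicate point I anticipate needing the most care is justifying the comparison $h(u)\ge 1/(1+\beta S(u))$ across the full range of $u$ visited between $\sigma$ and $\tau''$; if $u_c$ wanders into the region where $h$ dips below the nullcline, this will likely require iterating the argument at successive turning points.
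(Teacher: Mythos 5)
Your proof and the paper's diverge after the shared opening move (take the first $\sigma>\tau_1$ with $u_c'(\sigma)=0$ and $u_c\ge u_0$ on $[\tau_1,\sigma]$): the paper then gives a short monotone-escape argument, whereas you embark on a two-level case split on $u_c''(\sigma)$ and then on $w_c'(\sigma)$. That case split, and the ``delicate point'' you flag at the end, both stem from not having pinned down the following fact: the inequality $h(u)>\frac{1}{1+\beta S(u)}$ holds for \emph{every} $u>u_0$, not merely in a right neighbourhood of $u_0$. This is a direct consequence of Condition \ref{c1}. The equilibria of (\ref{1}) are exactly the $u$ with $h(u)=\frac{1}{1+\beta S(u)}$, equivalently the zeros of $Z(u)=u-\frac{S(u)}{1+\beta S(u)}$ (the function that appears in the proof of Lemma \ref{lem10}); Condition \ref{c1} says $u_0$ is the only such $u$, and since $Z'(u_0)=1-q_0^2S'(u_0)>0$ by (\ref{17a}), we get $Z>0$ on all of $(u_0,\infty)$. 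No iteration over successive turning points is needed or possible; $h$ never ``dips below'' the $q$-nullcline for $u>u_0$.

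With this in hand, most of your casework disappears. At $\sigma$ one has $v(\sigma)=u(\sigma)\ge u_0$ and $q_c'(\sigma)>0$ (your preliminary step ruling out $q_c=q_0$ on $(\tau_1,\sigma]$ is correct and is a detail the paper passes over), so $q(\sigma)<\frac{1}{1+\beta S(u(\sigma))}\le h(u(\sigma))$ and therefore $w'(\sigma)=b^2\bigl(u(\sigma)-q(\sigma)S(u(\sigma))\bigr)>0$ \emph{automatically}. Thus your sub-cases $w_c'(\sigma)=0$ and $w_c'(\sigma)<0$ are vacuous and the Taylor-expansion and backward-system machinery you planned for them can be dropped. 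In the two possibilities that remain ($w(\sigma)=0$ or $w(\sigma)>0$) the same observation drives the escape: after $\sigma$, $w>0$ and $w'>0$ persist for as long as $u\ge u_0$ and $q'>0$, because then $v>u>qS(u)$; so $u,v,w$ increase monotonically and $u\to\infty$, while $q>\frac{1}{1+\beta}$ (Proposition \ref{prop1}) and $S(u)\to 1$ eventually force $q'=\frac{\varepsilon}{c}\bigl(1-q(1+\beta S(u))\bigr)<0$ with $u>0$, $q<q_0$, contradicting $c\in\Sigma_1$. You gesture at this when you posit a ``first $\tau''$,'' but you never rule out the possibility that neither of your alternatives (a) nor (b) occurs, i.e.\ that $u'>0$ and $q'>0$ persist indefinitely; the limit computation just given is what closes that loophole, and it is, modulo compression, the entirety of the paper's proof.
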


\begin{proof}
If $u_{c}^{\prime}=0$ at some first $\sigma>\tau_{1}$ with $u_{c}\left(
\sigma\right)  \geq u_{0},$ then $u_{c}^{\prime\prime}\left(  \sigma\right)
\geq0.$ But in the region where $u\geq u_{0}$ and $q^{\prime}>0,$ $w^{\prime}$
is positive, and this implies that $p_{c}$ crosses into $q^{\prime}<0,$ a
contradiction of the definition of $\Sigma_{1}.$
\end{proof}

\begin{corollary}
If $\mathcal{U}_{\varepsilon,c_{\ast}}^{+}$ is not homoclinic then there is a
$t_{2}>\tau_{1}$ such that $u_{c_{\ast}}\left(  t_{2}\right)  =u_{0}$ and
$u_{c_{\ast}}^{\prime}<0$ on $\left[  \tau_{1},t_{2}\right]  .$ Further,
$u_{c_{\ast}}<u_{0}$ on $\left(  t_{2},\infty\right)  $.
\end{corollary}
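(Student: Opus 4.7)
The plan is to proceed in two stages: first establish the existence of $t_2$ with $u_{c_\ast}^\prime<0$ on $[\tau_1,t_2]$, and then rule out any return of $u_{c_\ast}$ to $u_0$ after $t_2$. The starting point is that $c_\ast\in(c_3,c_1]\subset\Sigma$ (we have $c_\ast>c_3$ because, by Lemma \ref{lem11}, points of $(c_3,c_1]$ near $c_3$ lie in $\Sigma\setminus\Sigma_1$, and $c_\ast\le c_1$ because $c_1\in\Sigma_1$), so $\tau_1=\tau_1(c_\ast)$ is defined with $q_{c_\ast}^\prime(\tau_1)=0$, $u_{c_\ast}^\prime(\tau_1)<0$, and (by Proposition \ref{prop2}) $q_{c_\ast}^{\prime\prime}(\tau_1)>0$. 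The implicit function theorem then makes $\tau_1(c)$ smooth in $c$ near $c_\ast$.

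For the first stage, I pick a sequence $c_n\downarrow c_\ast$ with $c_n\in\Sigma_1$, which exists by the infimum definition of $c_\ast$. For each $c_n$, Lemma \ref{lemslow2} gives $u_{c_n}^\prime<0$ on the maximal interval $[\tau_1(c_n),\tau_1(c_n)+\delta_n]$ on which $u_{c_n}\ge u_0$. Continuous dependence on $c$ and smoothness of $\tau_1(c)$ yield, in the limit, $u_{c_\ast}^\prime\le 0$ on any interval $[\tau_1,T]$ throughout which $u_{c_\ast}\ge u_0$. To upgrade $\le$ to $<$, I suppose there is a first $\sigma>\tau_1$ with $u_{c_\ast}^\prime(\sigma)=0$ and $u_{c_\ast}(\sigma)\ge u_0$, then walk through the possibilities for $u^{\prime\prime},u^{\prime\prime\prime},q^\prime,q^{\prime\prime}$ at $\sigma$ using Proposition \ref{prop2}, exactly as in Lemma \ref{lem9}. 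The case $u_{c_\ast}^{\prime\prime}(\sigma)>0$ forces $u_{c_\ast}^\prime>0$ immediately after $\sigma$, contradicting the limiting bound just obtained; the degenerate vanishing cases reduce to one of the scenarios of Lemma \ref{lem5a} and are excluded there. If $u_{c_\ast}\ge u_0$ on all of $[\tau_1,\infty)$, then boundedness of solutions of (\ref{1}) together with the strict monotonicity of $u_{c_\ast}$ forces the $\omega$-limit set to consist of equilibria, hence to equal $\{p_0\}$; but this makes $\mathcal{U}_{\varepsilon,c_\ast}^+$ homoclinic, contradicting the hypothesis. So a first $t_2>\tau_1$ with $u_{c_\ast}(t_2)=u_0$ exists, and $u_{c_\ast}^\prime<0$ on $[\tau_1,t_2]$.

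For the second stage, suppose $u_{c_\ast}$ returns to $u_0$ at some first $t_3>t_2$. Since $u_{c_\ast}^\prime(t_2)<0$, certainly $u_{c_\ast}<u_0$ on a right-neighborhood of $t_2$, so $u_{c_\ast}^\prime(t_3)\ge 0$. If the inequality is strict, continuous dependence makes the same crossing persist for all $c$ in a neighborhood of $c_\ast$, so in particular for $c\in\Sigma_1$ just above $c_\ast$; but then, once $u_c\ge u_0$ resumes with $q_c<q_0$ and $w_c^\prime>0$ in that region, $q_c^\prime$ must change sign from positive to negative on an interval where $u_c>0$ and $q_c<q_0$, violating the defining property of $\Sigma_1$. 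If instead $u_{c_\ast}^\prime(t_3)=0$, then $v_{c_\ast}^\prime(t_3)\le 0$ and I apply the $u=u_0$ case of Lemma \ref{lem5a} (case (iii) or (iv) depending on the sign of $q^\prime(t_3)$) to derive the same contradiction. Either way, $u_{c_\ast}<u_0$ on $(t_2,\infty)$.

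The step I expect to be hardest is the second stage, because the infimum characterization of $c_\ast$ forces the perturbation arguments to run opposite to those used for $c^\ast$ in the fast-pulse analysis, and the precise correspondence between a re-crossing of $u_0$ by $u_{c_\ast}$ and a failure of the $\Sigma_1$ property for nearby $c$ has to be traced with care — in particular because the sign information on $q_c^\prime$ as $u_c$ re-enters $u\ge u_0$ is delicate. Lemma \ref{lem5a} should dispatch the degenerate tangency cases, but pinning down the transverse case against the definition of $c_\ast$ is where I anticipate the bulk of the work.
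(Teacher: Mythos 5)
Your overall two-stage decomposition (find $t_2$, then rule out a return crossing of $u_0$) matches the paper's, and your stage 1 is essentially the same argument: the paper simply invokes Lemma \ref{lemslow2} directly for $c_\ast$, while you supply the limit argument over a sequence $c_n \downarrow c_\ast$ in $\Sigma_1$ together with the strict-inequality upgrade; since the paper's definition makes $c_\ast$ an infimum of $\Sigma_1$ rather than a priori a member, your version is actually more explicit about why Lemma \ref{lemslow2} applies at $c_\ast$.

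Your stage 2, however, diverges from the paper and contains a genuine gap in the transverse case. You assert that once $u_c$ re-enters $\{u \geq u_0\}$ with $q_c < q_0$ and $w_c' > 0$, ``$q_c'$ must change sign from positive to negative on an interval where $u_c > 0$ and $q_c < q_0$,'' which would violate $\Sigma_1$. But nothing forces this: since $q' > 0$ exactly when $q$ is below the decreasing nullcline $q = \tfrac{1}{1+\beta S(u)}$, and the $\Sigma_1$ property keeps $q_c$ below that nullcline, there is no mechanism driving $q_c'$ to become negative. You have not constructed a time $T$ at which $q_c' \leq 0$ with $u_c > 0$, $q_c < q_0$, which is what a violation of $\Sigma_1$ requires. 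Moreover, attempting to extract a contradiction from the nearby $c \in \Sigma_1$ is awkward because the $\Sigma_1$ property is a condition on a potentially unbounded time interval, where finite-time continuous dependence gives no uniform control. The paper instead stays with the single solution $p_{c_\ast}$: it first notes that the $\Sigma_1$ limit forces $q_{c_\ast}' \geq 0$ on $[\sigma,\infty)$, then observes that $u_{c_\ast}$ cannot increase without bound (because $q'\ge0$ traps $q$ below the decreasing $q$-nullcline, which in turn bounds $v$ and hence $u$), so there is a $\rho \ge \sigma$ with $u'(\rho)=0$, $u''(\rho)\le0$ (hence $v'(\rho)\le0$), $u(\rho)\ge u_0$, $q'(\rho)\ge0$, and $w'(\rho)>0$ — precisely configuration (iv) of Lemma \ref{lem5a}, which is impossible for a solution on $\mathcal{U}^+_{\varepsilon,c_\ast}$. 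This handles the transverse case without any perturbation to nearby $c$ and subsumes your tangent case. (Your tangent case is also slightly off in detail: when $u(t_3)=u_0$ and $q'(t_3)=0$ one is forced to $q(t_3)=q_0$ and $w'(t_3)=0$, so the relevant case of Lemma \ref{lem5a} is (ii), not (iv); and $q'(t_3)<0$ is not covered by any case of that lemma and must be excluded by an argument showing $q_{c_\ast}<q_0$ persists.) You should replace the perturbation-to-$\Sigma_1$ step with the paper's direct argument at $\rho$.
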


\begin{proof}
Let $p=p_{c_{\ast}}.$ Lemma \ref{lemslow2} implies the existence of $t_{2}.$
Suppose there is a first $\sigma>t_{2}$ with $u\left(  \sigma\right)  =u_{0}.$
From the definitions of $\Sigma_{1}$ and $c_{\ast},$ $q^{\prime}\geq0$ on
$[\sigma,\infty).$ Since $\frac{1}{1+\beta S\left(  u\right)  }$ is decreasing
and $q^{\prime}>0$ if $q<\inf\frac{1}{1+\beta S},$ $u$ cannot increase
indefinitely. Hence there is a $\rho\geq\sigma$ with
\begin{align*}
u\left(  \rho\right)   &  \geq u_{0},u^{\prime}\left(  \rho\right)
=0,u^{\prime\prime}\left(  \rho\right)  =\frac{v^{\prime}\left(  \rho\right)
}{c}=\frac{w\left(  \rho\right)  }{c}\leq0\\
~w^{\prime}\left(  \rho\right)   &  >0,~q^{\prime}\left(  \rho\right)  \geq0.
\end{align*}
A contradiction\ then\ results\ from\ (iv) of Lemma \ref{lem5a}.
\end{proof}

Now apply the technique of Lemma \ref{lem5}, including use of Proposition
\ref{prop2} and Lemma \ref{lem5a}, to show that $u_{c_{\ast}}>0$ and
$q_{c_{\ast}}<q_{0}$ on $(t_{2},\infty)$. In particular, Lemma \ref{lem5a} is
used to show that there is no $t>t_{2}$ (in fact, no $t$ at all) with $\left(
u_{c_{\ast}}\left(  t\right)  ,q_{c_{\ast}}\left(  t\right)  \right)  =\left(
u_{0},q_{0}\right)  .$ It follows that on $\left(  t_{2},\infty\right)  ,$
$q_{c^{\ast}}^{\prime}>0,$ and so indeed, $\mathcal{U}_{\varepsilon,c_{\ast}%
}^{+}$ is homoclinic. This completes the proof of Theorem \ref{thm1}.

\subsection{Proofs of Theorems \ref{thm1a}, \ref{thm2} and \ref{thm3}.}

As mentioned above, Theorem \ref{thm1a} follows from Theorems \ref{thm1} and
\ref{thm2}. In Theorem \ref{thm2} $\varepsilon$ is not fixed. Also, $c_{1}$ is
any number in $\left(  0,c_{0}^{\ast}\right)  ,$ which however is fixed at
this stage for the rest of this section.

Let $I=\left[  c_{1},c_{0}^{\ast}+1\right]  $. We note that the unstable
manifold $\mathcal{U}_{\varepsilon,c}^{+}$ varies continuously with $\left(
\varepsilon,c\right)  $ for $\varepsilon\geq0$ and $c\in I.$ To be more
precise, if $p_{\bar{\varepsilon},\bar{c}}$ exists on $(-\infty,T]$ then in
some neighborhood \ \ of $\left(  \bar{\varepsilon},\bar{c}\right)  ,$
$p_{\varepsilon,c}\left(  t\right)  $ exists for $-\infty<t\leq T$ and is a
continuous function of $\left(  \varepsilon,c,t\right)  .$\footnote{See the
footnote at the end of the appendix for a further discussion of this point.}

From the third sentence of Lemma \ref{lem2} it follows that $c_{1}$ can be
chosen in $\left(  0,c_{0}^{\ast}\right)  $ such that if $c_{1}\leq
c<c_{0}^{\ast},$ then $v_{0,c}\left(  T\left(  c\right)  \right)
>q_{0}S\left(  u_{knee}\right)  ,$ where $T\left(  c\right)  $ is the unique
zero of $w_{0,c}.$ This proves the first assertion of Theorem \ref{thm2}. We
now choose $c_{1}$ in this way.

\begin{lemma}
\label{lem2a}There is an $\varepsilon_{0}>0$ and a $\tau>0$ such that if
$0<\varepsilon<\varepsilon_{0}$ and $c\in I=[c_{1},c_{0}^{\ast}+1]$, then
$u_{\varepsilon,c}\left(  t\right)  =u_{knee}$ for some first $t\leq\tau,$ and
$w_{\varepsilon,c}>0$ on $(-\infty,\tau].$ (Hence, $p_{\varepsilon,c}$
satisfies the the first condition of Theorem \ref{thm1}.) Further,
$\varepsilon_{0}$ can be chosen so that $p_{\varepsilon,c_{1}}$ satisfies
conditions (\ref{i}) and (\ref{ii}) of Theorem \ref{thm1}.
\end{lemma}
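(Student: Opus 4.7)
The plan is to combine the detailed description of the $\varepsilon=0$ fast solutions provided by Lemmas \ref{lem1} and \ref{lem2} with the continuous dependence of $\mathcal{U}_{\varepsilon,c}^{+}$ on $(\varepsilon,c)$ asserted in Lemma \ref{lem2b}. The required behaviour already holds uniformly in $c\in I$ when $\varepsilon=0$, and a compactness argument transfers it to small $\varepsilon>0$.

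The first step is to analyse, for each $c\in I$, the first time $\tau_c^0$ at which $u_{0,c}=u_{knee}$. For $c\geq c_0^{\ast}$, Lemmas \ref{lem1} and \ref{lem2} give $w_{0,c}>0$ on $\mathbb{R}$ and $u_{0,c}$ increasing past $u_{+}$, so $\tau_c^0$ is finite. For $c_1\leq c<c_0^{\ast}$, the choice of $c_1$ together with the final statement of Lemma \ref{lem2} yields $u_{0,c}(t_1(c))>u_{knee}$, and the argument of Lemma \ref{lem3a} specialised to $\varepsilon=0$ further gives $u_{0,c}(T(c))>u_{knee}$, where $T(c)$ is the first zero of $w_{0,c}$. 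Consequently $\tau_c^0<T(c)$, and $w_{0,c}>0$ on $(-\infty,\tau_c^0]$. Since $u_{0,c}'(\tau_c^0)>0$, the implicit function theorem makes $\tau_c^0$ continuous in $c$; compactness of $I$ bounds $\tau_c^0$ above by some $\tau_1$, while on $[c_1,c_0^{\ast})$ the continuous function $T(c)-\tau_c^0$ tends to $+\infty$ as $c\to c_0^{\ast-}$ and so is bounded below by some $\delta>0$. Choosing $\tau$ with $\tau_1\leq\tau<\tau_1+\delta$ ensures $\tau_c^0\leq\tau<T(c)$ for all $c\in[c_1,c_0^{\ast})$, and $w_{0,c}>0$ on $(-\infty,\tau]$ for all $c\in I$.

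Next, pass to small $\varepsilon>0$. By Lemma \ref{lem2b}, for large negative $t$ one has $u_0<u_{\varepsilon,c}<v_{\varepsilon,c}$ and $w_{\varepsilon,c}>0$ on $\mathcal{U}_{\varepsilon,c}^{+}$, uniformly in $(\varepsilon,c)$; fix $N$ large enough that this holds on $(-\infty,-N]$. On the compact set $[-N,\tau]\times I$, $w_{0,c}(t)$ has a positive minimum, and continuous dependence of $p_{\varepsilon,c}$ on $(\varepsilon,c)$ (Lemma \ref{lem2b}) then gives $w_{\varepsilon,c}>0$ on $[-N,\tau]$ and forces $u_{\varepsilon,c}$ to cross $u_{knee}$ at some first time $\leq\tau$, provided $\varepsilon<\varepsilon_0$ is sufficiently small, uniformly in $c\in I$. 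This proves the first assertion.

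For condition (ii) at $c=c_1$: Lemma \ref{lem2} applied at $c_1<c_0^{\ast}$ shows that $u_{0,c_1}'$ has a unique zero $t_1^0$ with $u_{0,c_1}''(t_1^0)<0$, $u_{0,c_1}'<0$ on $(t_1^0,\infty)$, and $u_{0,c_1}(t)\to-\infty$, so there is a first $t_3^0>t_1^0$ with $u_{0,c_1}(t_3^0)=0$ and $u_{0,c_1}'(t_3^0)<0$. The three features, namely the nondegenerate local maximum of $u$ at $t_1^0$, the absence of any other zero of $u_{0,c_1}'$ on $[-N,t_3^0+1]$, and the transverse zero of $u$ at $t_3^0$, persist under small perturbations by the implicit function theorem on this compact interval, so for $\varepsilon_0$ possibly shrunk further, $u_{\varepsilon,c_1}$ inherits the same qualitative picture. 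Condition (i) is automatic from $p_{\varepsilon,c_1}\in\mathcal{U}_{\varepsilon,c_1}^{+}$. The main subtlety throughout is securing uniform continuous dependence in $c\in I$ at both ends, via the unstable manifold on $(-\infty,-N]$ and standard ODE theory on $[-N,\tau]$, matching at $t=-N$.
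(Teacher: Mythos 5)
Your overall strategy is the paper's: analyse the $\varepsilon=0$ solutions over the compact interval $I$ using Lemmas~\ref{lem1} and~\ref{lem2}, then transfer to small $\varepsilon>0$ via the continuous dependence of $\mathcal{U}^{+}_{\varepsilon,c}$ on $(\varepsilon,c)$ from Lemma~\ref{lem2b}, with the condition (\ref{ii}) part handled by persistence of nondegenerate features on a compact time window. That matches the paper's (much terser) proof.

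There is, however, one step that does not go through as written: the selection of $\tau\in[\tau_1,\tau_1+\delta)$. You established the pointwise bound $T(c)-\tau^0_c\geq\delta$, which gives $T(c)\geq\tau^0_c+\delta$; but to conclude $\tau<T(c)$ with $\tau\geq\tau_1=\sup_{c}\tau^0_c$ you would need $T(c)\geq\tau_1+\delta$, and since $\tau^0_c$ may be far smaller than $\tau_1$ at the particular $c$ where $T$ is smallest, the pointwise bound does not control $\inf_c T(c)-\sup_c\tau^0_c$. The fix is to drop the attempt to bracket with a single $\tau$: the pointwise open condition ``the first crossing of $u_{knee}$ by $u_{0,c}$ occurs strictly before the first zero of $w_{0,c}$'' (which you did establish for each $c\in I$) persists on a neighborhood of each $(0,c)$ in $(\varepsilon,c)$-space, and compactness of $I$ then yields a single $\varepsilon_0$; the uniform bound on the crossing time comes separately from your bound $\tau^0_c\leq\tau_1$. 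Read this way the clause $w_{\varepsilon,c}>0$ on $(-\infty,\tau]$ should be understood as holding up to the first crossing of $u_{knee}$, which is what the lemma is used for. This is a localized wrinkle in an otherwise correct argument, and the paper's own few-line proof elides the same point.
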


\begin{proof}
From the choice of $c_{1},$ Lemma \ref{lem2} implies that for any $c\geq
c_{1},$ there is a $\delta>0$ such that if $c\in I$ then $v_{0,c}^{\prime
}=w_{0,c}\geq\delta$ in the interval $\left[  0,\tau\right]  $ where
$u_{m}\leq u\leq u_{knee}.$ From $cu^{\prime}=v-u$ it follows that for some
$\tau>0,$ if $c_{1}\leq c\leq c_{0}^{\ast}$ then $u_{0,c}=u_{knee}$ before
$t=\tau.$ The uniform continuity of $u_{0,c}\left(  t\right)  $ in $\left(
\varepsilon,c,t\right)  ,$ for $-\infty<t\leq\tau$ and $d\in I$ in any compact
interval $[0,\hat{\varepsilon}]$ with $\hat{\varepsilon}>0,$ \ then implies
the first conclusion of the Lemma. \ The remaining assertion of Lemma
\ref{lem2a} follows by similar arguments.
\end{proof}

We have now proved Theorems \ref{thm1}, \ref{thm2}, and \ref{thm1a}, in that
order. To prove Theorem \ref{thm3} apply a continuity argument similar to that
just used to show that for any $\delta>0$ there is an $\varepsilon_{1}$ such
that if $\delta\leq c\leq c_{0}^{\ast}-\delta$ and $0<\varepsilon
<\varepsilon_{1},$ the pair $\left(  \varepsilon,c\right)  $ satisfy the
hypotheses on $\left(  \varepsilon,c_{1}\right)  $ in Theorem \ref{thm1}. It
follows that pulses exist for some $c_{\ast}\in\left(  0,\delta\right)  $ and
some $c^{\ast}\in\left(  c_{0}^{\ast}-\delta,c_{0}^{\ast}\right)  .$ But Lemma
\ref{lem10}\ implies that $\frac{\varepsilon}{c_{\ast}}<M.$ Theorem \ref{thm3} follows.

\section{Discussion\label{discussion}}

\subsection{Hypothesis 3.1 of \cite{faye}}

As stated earlier, there is an additional hypothesis in the existence result
given in \cite{faye}, namely Hypothesis 3.1 in that paper. This hypothesis is
interesting in a broader context, and we will include some comments on its
relation to the well-known pde model of FitzHugh and Nagumo.

To state this hypothesis we need to introduce a basic tool in the method of
geometric perturbation, the so-called \textquotedblleft
singular\textquotedblright\ solution. The singular solution of (\ref{1}) is a
continuous piecewise smooth curve in $R^{4}$ consisting of four smooth pieces.

The first piece is the front with speed $c_{0}^{\ast}$ found in Lemma
\ref{lem1}. (Recall that \textquotedblleft fronts\textquotedblright\ were
defined just after the statement of this lemma.) In Figure \ref{figfront} the
green line segment is the projection of the graph of the front onto the
$\left(  u,q\right)  $ plane.

\begin{figure}[h]
\includegraphics[height=1.6 in, width =1.2  in]{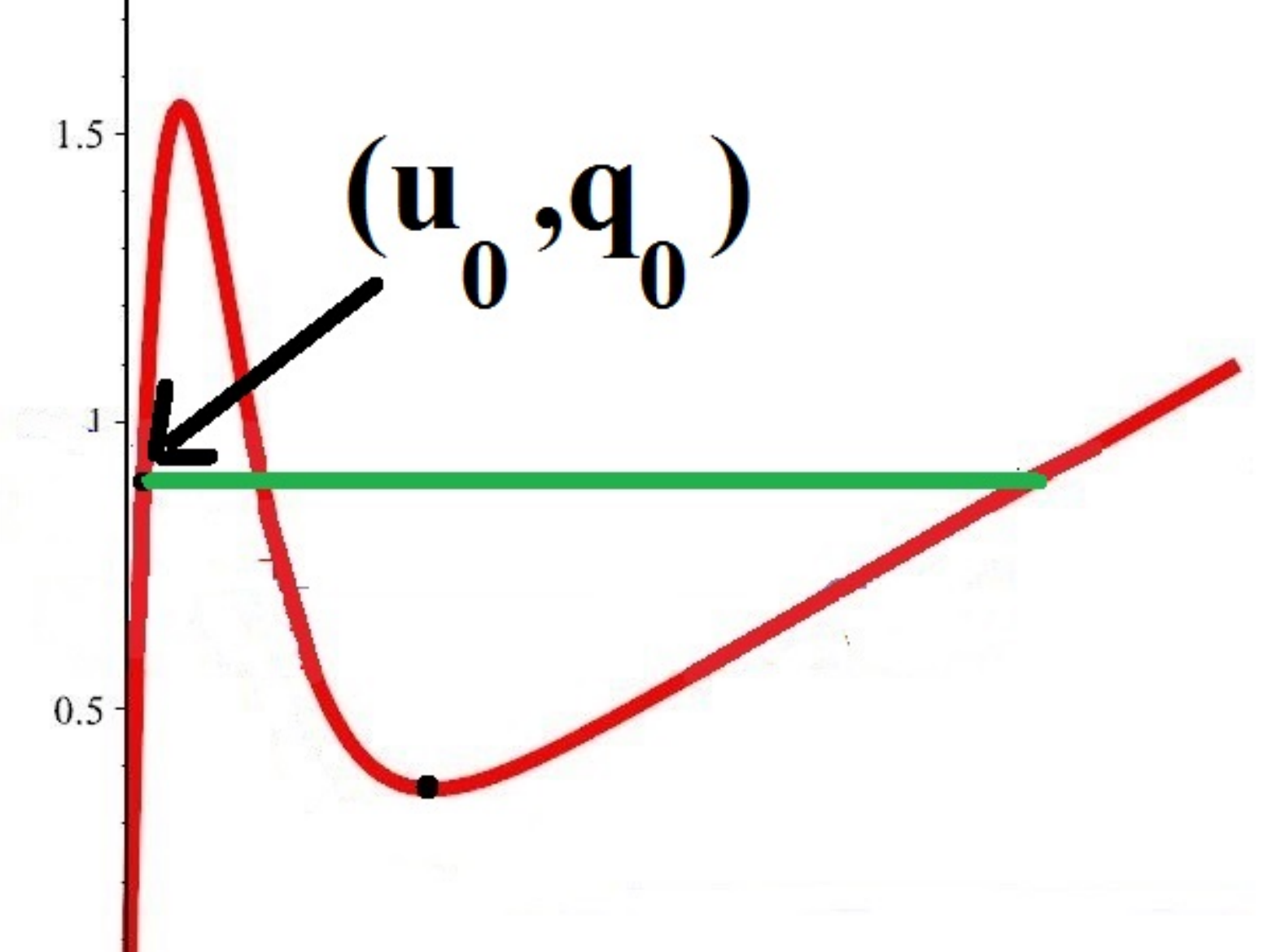}\caption{The red
curve is the graph of $q=\frac{u}{S\left(  u\right)  }$}%
\label{figfront}%
\end{figure}

The second piece of the singular solution is a segment of the nullcline
$u=qS\left(  u\right)  $ (with $w=0,$ $v=u$) as shown in Figure \ref{figslow}.
It is obtained from (\ref{1}) by letting $p\left(  t\right)  =P\left(
\varepsilon t\right)  ,$ formally setting $\varepsilon=0$ in the resulting
system of ode's for $P=\left(  U,V,W,Q\right)  ,$ and solving the resulting
set of one differential equation and three algebraic equations, one of which
is $U-QS\left(  U\right)  =0.$ For more information on this segment, and the
singular solution in general, see \cite{faye}. \ We don't need to say more
about this segment here.

\begin{figure}[h]
\includegraphics[height=1.6 in, width =1.2  in]{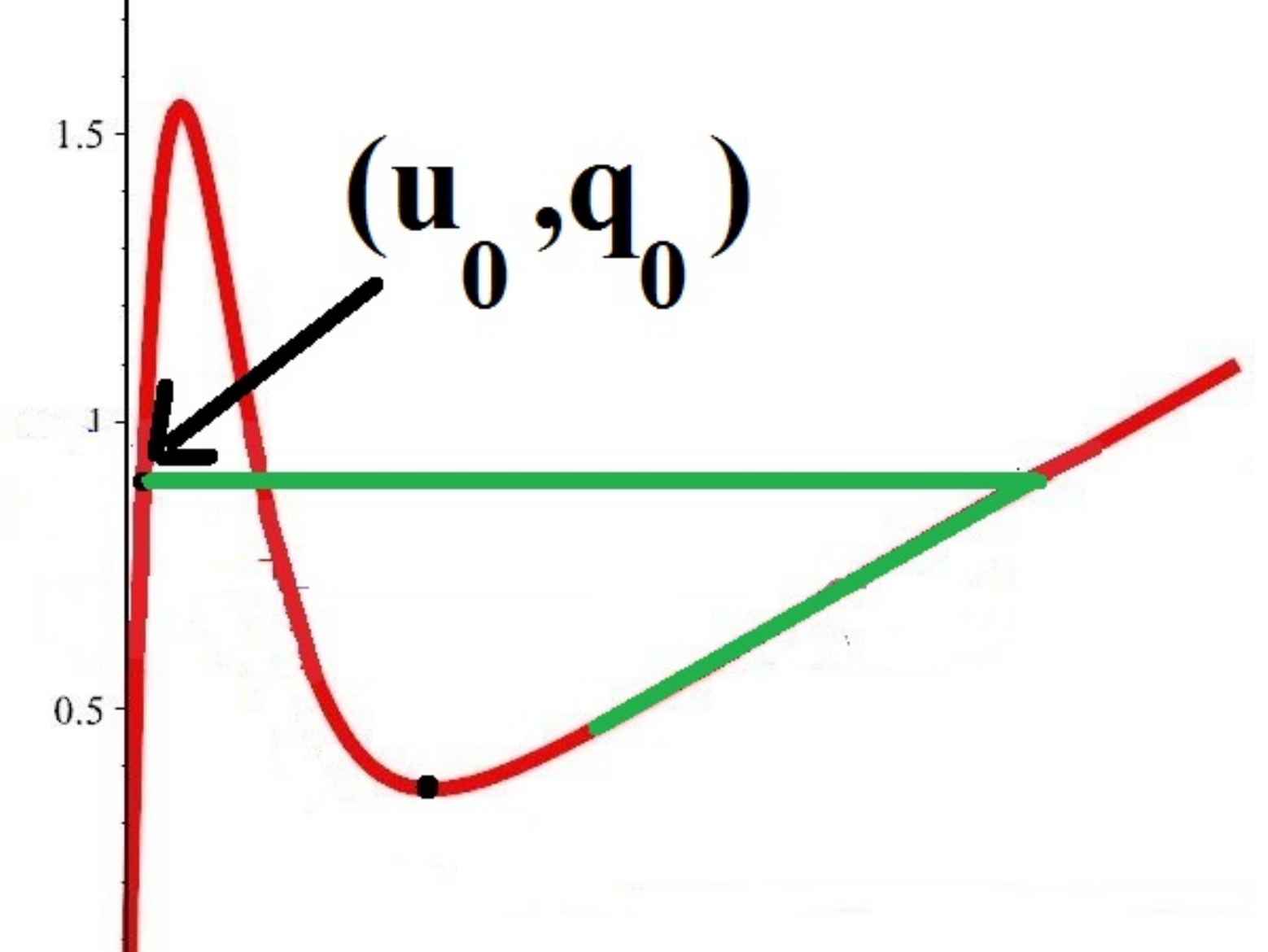}\caption{The first
two segments of the singular solution}%
\label{figslow}%
\end{figure}

To define the third part of the singular solution (crucial in Hypothesis 3 of
\cite{faye}), we consider the fast system (\ref{2}), but with $q_{0}$ replaced
by $q,$ as a parameter ranging between $q_{\min}$ and $q_{0}.$%

\begin{equation}
\left.
\begin{array}
[c]{c}%
u^{\prime}=\frac{v-u}{c}\\
v^{\prime}=w\\
w^{\prime}=b^{2}\left(  v-qS\left(  u\right)  \right)
\end{array}
\right.  \label{fast}%
\end{equation}
\bigskip

For each $q\in(q_{\min},q_{0}]$ there is a unique $c\left(  q\right)  \geq0$
such that (\ref{fast}) has a bounded non-constant solution
\[
r_{c\left(  q\right)  }=\left(  u_{c\left(  q\right)  },v_{c\left(  q\right)
},w_{c\left(  q\right)  }\right)  .
\]
The graph of this solution is a heteroclinic orbit connecting the left and
right branches of $q=\frac{u}{S\left(  u\right)  }$ in the $\left(
u,q\right)  $ plane. There is at least one value of $q\in\left(  q_{\min
},q_{0}\right)  $ such that $c\left(  q\right)  =0.$ This will be true for any
$q$ such that
\[
\int_{u_{-}\left(  q\right)  }^{u_{+}\left(  q\right)  }\left(  qS\left(
u\right)  -u\right)  du=0.
\]
For $q\in\left(  q_{\min},q_{0}\right)  $ and sufficiently close to $q_{\min}%
$, the integral above is negative, and whenever this is the case, the
connecting heteroclinic orbit exists for some $c\left(  q\right)  >0,$ but
unlike the front defined earlier, $u_{c\left(  q\right)  }$ is decreasing,
from $u_{+}\left(  q\right)  $ to $u_{-}\left(  q\right)  .$ Such a solution
is called a \textquotedblleft back\textquotedblright. \ If $q=q_{\min}$ then
there is a \textquotedblleft back\textquotedblright\ for any $c\geq
\lim_{q\rightarrow q_{\min}^{+}}c\left(  q\right)  .$ \ All this can be proved
using methods from the appendix, or see \cite{faye}.

\bigskip

The third part of the singular solution of (\ref{1}) is a \textquotedblleft
back\textquotedblright, at a value $q=q_{1}\in\lbrack q_{\min},q_{0})$\ with
speed $c=c\left(  q_{0}\right)  .$ There must be at least one $q_{1}$ for
which such a back exists. As far as we know there is no proof that $c$ is a
monotone function of $q$ in $\left(  q_{\min},q_{0}\right)  ,$ so possibly
there could be more than one such $q_{1}.$ In this case we can require that
the jump down is at the largest possible $q_{1}$ supporting a back with speed
$c\left(  q_{0}\right)  .$ The singular solution is said to \textquotedblleft
jump down above the knee\textquotedblright\ if $q_{1}>q_{\min}.$

Numerical computations suggest that often no such $q_{1}$ exists in $\left(
q_{\min},q\right)  $. In this case, there is still a traveling back with speed
$c\left(  q_{0}\right)  ,$ but it is at $q_{1}=q_{\min}.$

\begin{figure}[h]
\includegraphics[height=1.6 in, width =1.2  in]{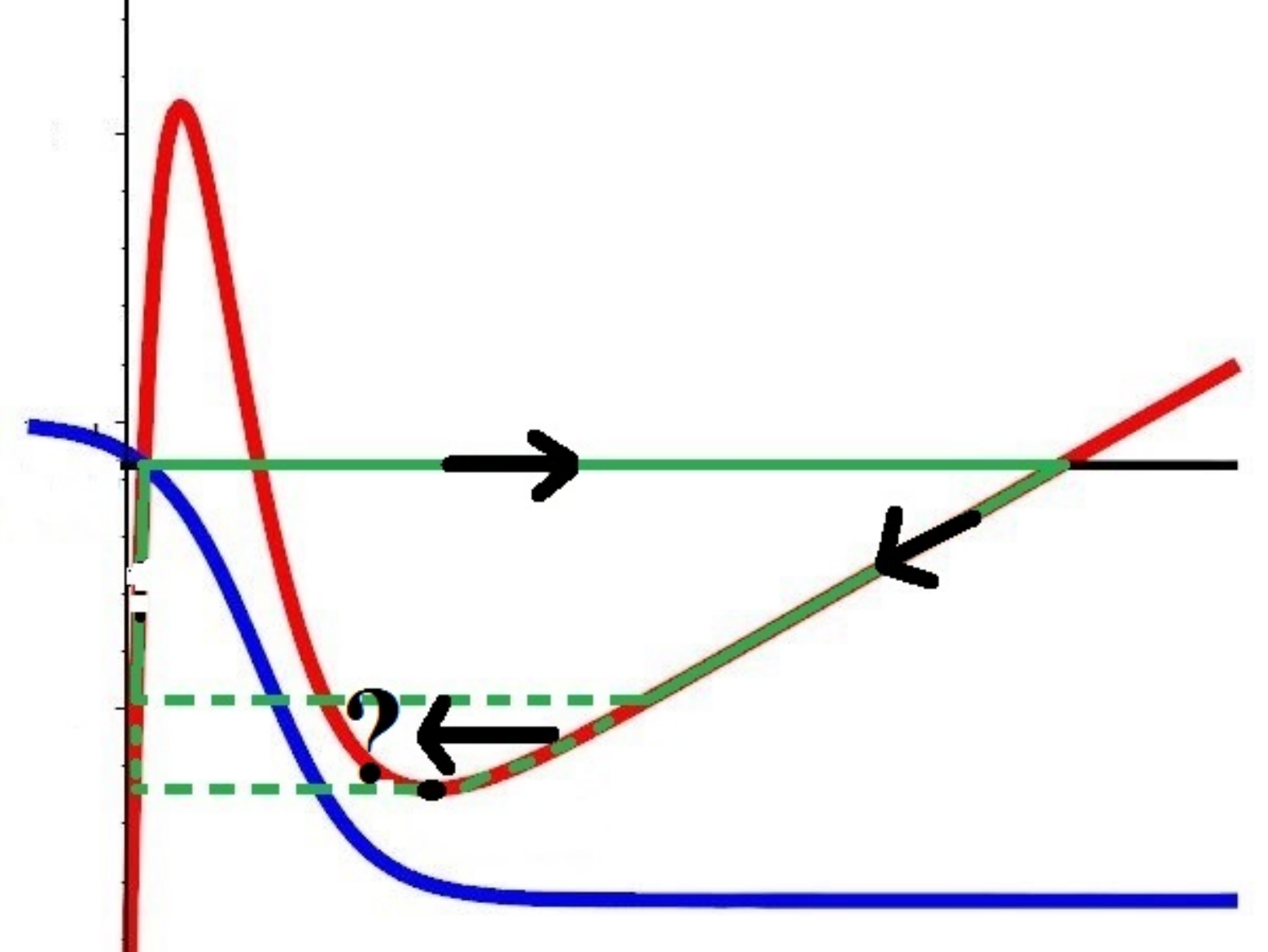}\caption{Nullclines:
Red, Blue; Singular solution: Green}%
\end{figure}

The fourth part of the singular solution is a slow return along the left
branch of the nullcline.

\bigskip

Hypotheses of Faye:\medskip

(i) \textit{The system (\ref{1}) has a unique equilibrium point }$\left(
u_{0},u_{0},0,q_{0}\right)  $.\textit{ }

\textit{(ii) If }$h\left(  u\right)  =\frac{u}{S\left(  u\right)  }$\textit{
then the equation }%
\[
g\left(  u\right)  =q_{0}%
\]
\textit{has exactly three solutions, }$u_{0}<u_{m}<u_{+},$\textit{ with
}$h^{\prime}\left(  u_{0}\right)  >0,$\textit{ }$h^{\prime}\left(
u_{m}\right)  <0,$\textit{ and }$h^{\prime}\left(  u_{+}\right)  >0.$\textit{
}

\textit{(iii)}%
\[
\int_{u_{0}}^{u_{+}}\left(  q_{0}S\left(  u\right)  -u\right)  ~du>0.
\]

\textit{(iv) The speed of any \textquotedblleft back\textquotedblright\ with
}$q\in(q_{\min},q_{0})$ \textit{is less than }$c\left(  q_{0}\right)
.\medskip$

It follows that the back of the singular solution is required to be at the
knee. This condition can only be verified by numerical integration of the fast system.

\begin{theorem}
\textbf{(}Faye\textbf{): }\textit{Under hypotheses (i), (ii), (iii), and (iv),
if }$\varepsilon$ is sufficiently small then \textit{the system (\ref{1}) has
a homoclinic orbit for at least one positive value of }$c.$
\end{theorem}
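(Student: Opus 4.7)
The plan is to follow the standard geometric singular perturbation strategy that Faye's hypothesis (iv) is designed to enable: build a continuous singular cycle $\Gamma_0 \subset \mathbb{R}^4$ out of fast and slow pieces, then show that $\Gamma_0$ perturbs into a true homoclinic orbit for all sufficiently small $\varepsilon>0$, with the speed $c(\varepsilon)$ selected near $c_0^\ast$.

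First I would assemble the singular orbit as a concatenation of four pieces. Piece (a) is the front $r_{c_0^\ast}$ from Lemma \ref{lem1}, embedded in $\mathbb{R}^4$ with $q \equiv q_0$, joining $p_0=(u_0,u_0,0,q_0)$ to $(u_+,u_+,0,q_0)$; hypotheses (i)--(iii) furnish it. Piece (b) is a slow segment of the reduced problem on the critical manifold $\{w=0,\,v=u,\,u=qS(u)\}$, obtained by rescaling $\tau=\varepsilon t$ and setting $\varepsilon=0$; on the right branch the reduced equation $c\,\dot q = 1-q-\beta qS(u)$ carries the orbit from $(u_+,u_+,0,q_0)$ down to the knee $(u_{knee},u_{knee},0,q_{\min})$. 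Piece (c) is a back at $q=q_{\min}$ with speed $c_0^\ast = c(q_0)$; its existence at the knee rather than above it is precisely hypothesis (iv). Piece (d) is the slow return along the left branch, closing back at $p_0$.

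Second, Fenichel's theorems apply on each of the two compact portions of the critical manifold used in (b) and (d), since those portions are normally hyperbolic (the off-manifold eigenvalues for the fast block (\ref{2}) are bounded away from the imaginary axis as long as we stay off the knee). They produce locally invariant slow manifolds $\mathcal{S}^r_\varepsilon$, $\mathcal{S}^\ell_\varepsilon$ that are $O(\varepsilon)$-close to the critical manifold and carry a slow flow that is an $O(\varepsilon)$-perturbation of the reduced equation. The unstable manifold $\mathcal{U}^+_{\varepsilon,c}$ of $p_0$ given by Lemma \ref{lem2b} stays uniformly $O(\varepsilon)$-close to the front over any compact time interval, and by the Exchange Lemma of Jones--Kopell \cite{jones} it is captured by $\mathcal{S}^r_\varepsilon$, then tracked by the slow flow into a neighborhood of the knee.

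Third, at the knee normal hyperbolicity fails, so I would use a Dumortier--Roussarie type blow-up: introduce weighted coordinates $(u,v,w,q) = (u_{knee}+\rho\bar u,\,u_{knee}+\rho\bar v,\,\rho^{3/2}\bar w,\,q_{\min}+\rho^2\bar q)$ with $\rho\ge 0$ and $\bar u^2+\bar v^2+\bar w^2+\bar q^2+\varepsilon^2/\rho^{\alpha}=1$ (the exponents chosen to balance the fold-with-slow structure), and analyze each chart. The output one needs from the blow-up is that trajectories entering the blown-up locus along $\mathcal{S}^r_\varepsilon$ emerge in the chart where $\bar w<0$ along the fast back of speed $c_0^\ast$, landing on $\mathcal{S}^\ell_\varepsilon$ after time $O(1)$ in the original variable. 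Finally, choosing a transverse section $\Sigma$ through the front and measuring the signed distance $\Delta(\varepsilon,c)$ between $\mathcal{U}^+_{\varepsilon,c}$ and $\mathcal{S}_{\varepsilon,c}$ on $\Sigma$, a Melnikov-type computation (using transversality of the front in (\ref{2}), which is a standard consequence of the saddle structure in Lemma \ref{lem1}) shows $\partial_c\Delta(0,c_0^\ast)\ne 0$, so the implicit function theorem produces a unique $c=c(\varepsilon)\to c_0^\ast$ with $\Delta(\varepsilon,c(\varepsilon))=0$, giving the desired homoclinic orbit.

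The main obstacle is unmistakably the blow-up at the knee: verifying the correct matching of charts so that the entering slow manifold is exchanged for the outgoing back at speed $c_0^\ast$ requires the careful bookkeeping (and the precise choice of weights) that was the reason Faye invoked \cite{dumortier}. Everything else --- Fenichel persistence on the two slow branches, exchange-lemma tracking along the front, and the final transversality step --- is essentially routine once the knee passage is under control. Hypothesis (iv) is what guarantees the singular orbit jumps exactly at the degenerate point so that this single blow-up suffices; without it, the singular orbit either jumps above the knee (a transverse fast fiber, which is much easier and is what our Theorem \ref{thm1a} handles without (iv)) or the matching of speeds across $q\in(q_{\min},q_0)$ needs additional monotonicity information that is not assumed here.
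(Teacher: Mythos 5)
The paper does not prove this theorem itself; it states it as Faye's result and cites \cite{faye}. The only description the paper gives of Faye's argument is a two-sentence summary: geometric perturbation shows the singular solution persists for small $\varepsilon$, and certain transversality conditions require blow-up because hypothesis (iv) forces the back of the singular orbit to occur at the knee, where normal hyperbolicity fails. Your proposal is a faithful and somewhat more detailed expansion of exactly that strategy: you correctly identify the four-piece singular orbit (front at speed $c_0^\ast$, slow passage down the right branch of $u=qS(u)$, back at $q=q_{\min}$ with the same speed, slow return along the left branch), invoke Fenichel persistence and the Jones--Kopell exchange lemma on the normally hyperbolic portions, isolate the blow-up at the knee as the technical core, and close with a transversality/implicit function argument to select $c(\varepsilon)\to c_0^\ast$. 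You also correctly flag why hypothesis (iv) is what makes the jump-down land precisely at the fold, which is the reason the blow-up analysis is needed at all. So your proposal is a reasonable blind reconstruction of Faye's proof, with the caveat that the specific blow-up weights you write down are a plausible guess rather than something you have verified, and verifying the chart-matching is, as you acknowledge, the genuine content.

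It is worth emphasizing, since the paper makes a point of it, that the present paper's own existence theorems (Theorems \ref{thm1a}, \ref{thm1}, \ref{thm2}, \ref{thm3}) deliberately avoid this entire machinery. Those proofs use an elementary shooting argument over the $c$-axis, tracking the one-dimensional unstable manifold $\mathcal{U}^+_{\varepsilon,c}$ directly via sign changes of $u'$, $q'$, and the positively invariant regions of Proposition \ref{prop1}, with no invariant-manifold persistence theory, no exchange lemma, and no blow-up. That route drops hypothesis (iv) entirely, produces two pulses (fast and slow) instead of one, and can in principle be verified at finite $\varepsilon$ by rigorous numerics. So while your sketch is a correct account of how Faye's theorem is proved, it is a genuinely different route from the one this paper takes for its own, stronger results; the trade-off is that the GSP route gives asymptotic structure (a unique $c(\varepsilon)\to c_0^\ast$ with known expansion) and underlies Faye's stability proof, whereas the shooting route gives existence under weaker hypotheses and at larger $\varepsilon$ but says less about uniqueness and nothing about stability.
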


Geometrical perturbation, based on work of Fenichel and others, is a technique
for showing that the singular solution is close to a real solution if
$\varepsilon$ is sufficiently small. Certain \textquotedblleft
transversality\textquotedblright\ conditions can be complicated to check,
requiring a technique called \textquotedblleft blow-up\textquotedblright.

\bigskip

We can contrast the existence of a back at the knee with the well-known
behavior of the pde model of FitzHugh and Nagumo. \ (See \cite{hmc} for a
presentation of this model and a proof that it has two traveling pulses. ) The
fast FitzHugh-Nagumo pulse can be described as having a jump up, (close to a
front) during which $u$ increases rapidly while $w$ is nearly zero, followed
by a slow increase in $w,$ and then a jump down (close to a back), with $w$
again nearly constant (but positive), and $u$ decreasing rapidly. In this
case, the back occurs before $\left(  u,w\right)  $ reaches the knee. See
\ref{figure5}.

\begin{figure}[h]
\includegraphics[height=1.5 in, width =2  in]{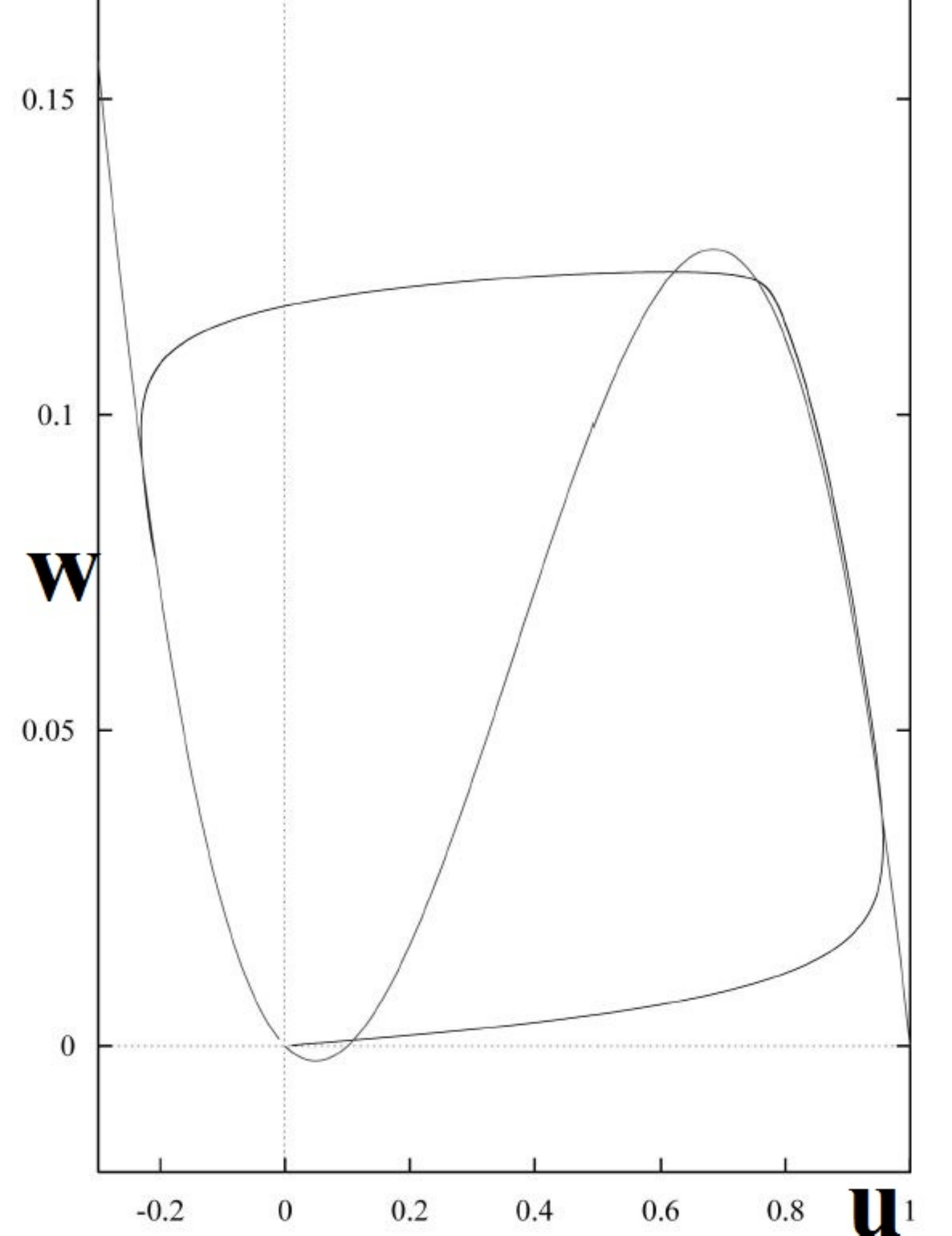}\caption{FitzHugh-Nagumo
pulse ($u,v$ plane)}%
\label{figure5}%
\end{figure}

We have done some preliminary numerical investigation to test whether it is
possible, in the model studied in this paper and with the particular function
$S$ in (\ref{-3}), to adjust the parameters $\lambda$ and $\kappa$ so that the
jump down occurs before $w$ reaches the knee. \ We have not found such a pair
$\left(  \lambda,\kappa\right)  ,$ but we cannot assert that none exists.

For the FitzHugh-Nagumo model, however, it is clear that the jump down is
always before reaching the knee. \ This follows because the reaction term in
equations is a cubic polynomial, $f\left(  u\right)  =u\left(  1-u\right)
\left(  u-a\right)  ,$ where $0<a<\frac{1}{2}.$ This function is symmetric
around its inflection point, which leads to the "before the knee" behavior of
the singular solution. \ 

So we searched numerically for alternative functions to use for $f$ which,
while still \textquotedblleft cubic like\textquotedblright,\thinspace\ permit
the down jump of the singular solution to be at the knee. We found such a
function, as illustrated in Figure \ref{figure6}. \ We are not aware of a
method which determines analytically where the downjump occurs, either for the
model of Faye or that of FitzHugh-Nagumo when $f$ is asymmetric. \ 

\begin{figure}[h]
\includegraphics[height=1.5 in, width =3  in]{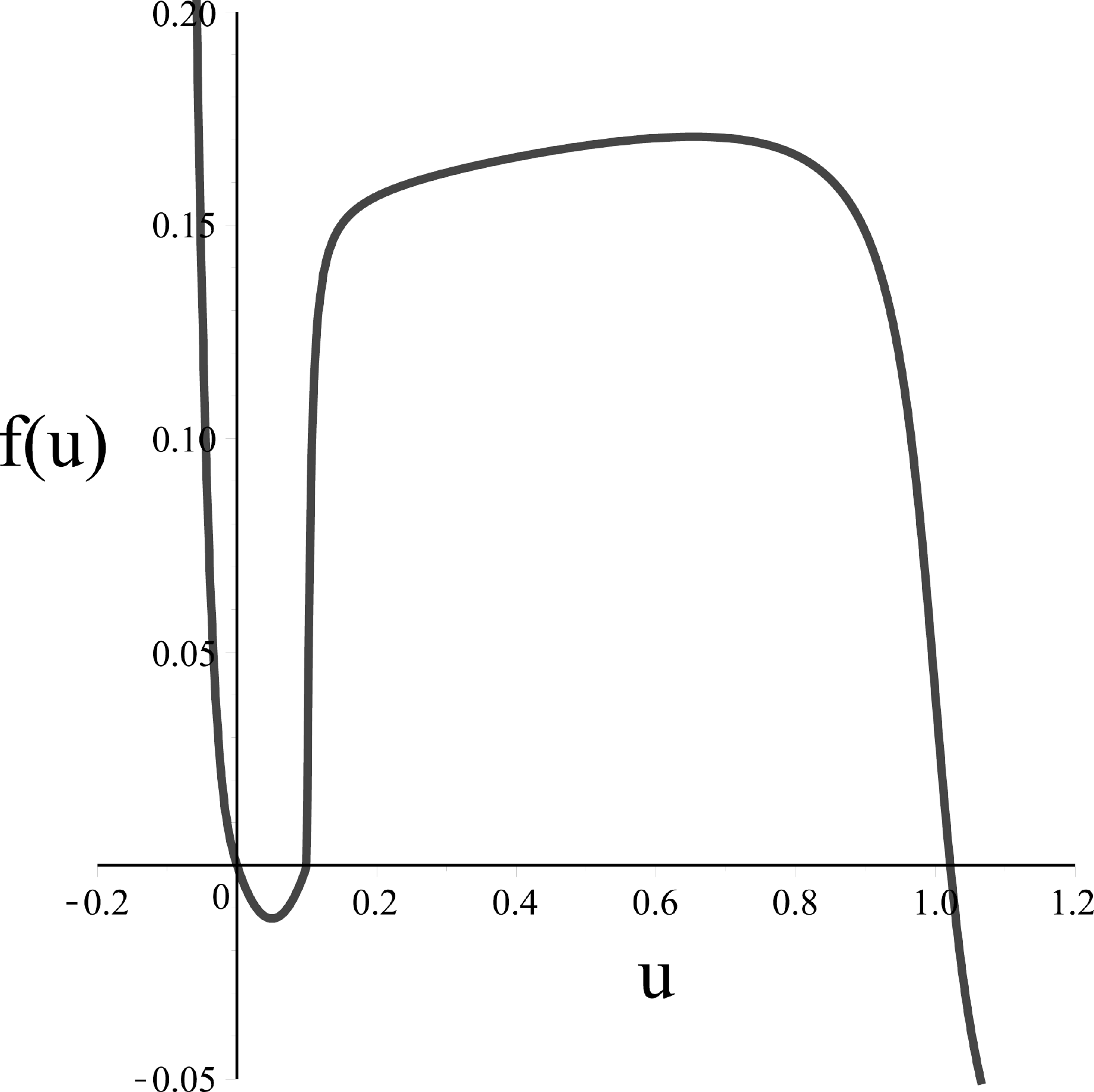}\caption{a
"cubic-like" function for an alternative FitzHugh-Nagumo type model}%
\label{figure6}%
\end{figure}

The existence proof in \cite{hmc} for fast and slow homoclinic orbits of the
FitzHugh-Nagumo system also applies to functions $f$ such as that pictured in
Figure \ref{figure6}. On the other hand it appears that the proof by geometric
perturbation in this case, while probably still basically valid, requires a
more complicated analysis because the downjump of the singular solution may
occur at the knee.

\subsection{The slow pulse}

It appears that the fast-slow analysis used to get the singular solution, for
any of the models we have discussed, does not apply to the slow pulse. Hence
it appears difficult to use geometric perturbation to obtain this solution.
\ In [\cite{kss} this solution was obtained in the FitzHugh-Nagumo case using
dynamical systems methods, but only for $a$ sufficiently close to $\frac{1}%
{2}.$

The analysis in \cite{kss} was in the region $a$ close to $\frac{1}{2},$
$\varepsilon$ and $c$ small. \ Indeed, if $a=\frac{1}{2}$ then there is only
one pulse and it is a standing wave ($c=0$). \ It is proved in \cite{kss} that
for $\frac{1}{2}-a$ positive but small there is a smooth curve $\left(
c,\varepsilon\left(  c\right)  \right)  $ corresponding to pulse solutions and
connecting $c_{\ast}$ to $c^{\ast}.$ The speeds $c_{\ast}$ and $c^{\ast},$ and
the maximum of $\frac{\varepsilon\left(  c\right)  }{c}$ in $[c_{\ast}%
,c^{\ast}]$ all tend to zero as $a\rightarrow\frac{1}{2}^{-}.$ \ \ 

We suspect that a similar picture holds for the model of Faye, but it is not
clear that our analysis is able to prove this much. \ \ The FitzHugh-Nagumo
condition that $a-\frac{1}{2}$ is small would be replaced here by requiring
that $\int_{u_{0}}^{u_{+}}\left(  q_{0}S\left(  u\right)  -u\right)  du$ is
small. \ 

\subsection{Model of Pinto and Ermentrout}

We said in the introduction that there was only a partial existence result for
the model in \cite{pe}. We were referring there to a recent paper by Faye and
Scheel \cite{fs}, which uses an interesting extension of the geometric
perturbation method to infinite dimensional spaces to handle this kind of
problem. \ The method is powerful because it allows an extension beyond the
sorts of kernel which reduce the problem to an ode.

Faye and Scheel remark that their paper appears to apply to the
Pinto-Ermentrout model. This is true, but with a limitation. A key hypothesis
in their paper is that for the singular solution, as described above, the jump
down occurs above the knee. In a private communication Professor Ermentrout
has observed that while this is true for the Pinto-Ermentrout model in some
parameter ranges, it is also common for the down-jump to occur at the knee.
\ This is why we characterized their result as "partial\textquotedblright\ for Pinto-Ermentrout.

Unfortunately, we have not been able to make our approach work for
Pinto-Ermentrout. The reason may be related to an important difference between
(\ref{1}) and the equivalent set of ode's obtained from (\ref{pe}). The
linearization of (\ref{1}) around its equilibrium point has only real
eigenvalues, for any $\varepsilon>0,$ while the equivalent linearization for
(\ref{pe}) has complex eigenvalues for a range of positive $\varepsilon$.
Thus, a homoclinic orbit would oscillate around equilibrium. \ A few
oscillations could occur even for the very small values of $\varepsilon$ where
the eigenvalues are real. The final steps in our proof above clearly do not
allow such oscillations.

In [\cite{h1}] we showed that there was co-existence of complex roots and a
homoclinic orbit for FitzHugh-Nagumo, and observed that work of Evans,
Fenichel and Feroe then implied the existence of many periodic solutions and a
form of chaos. This leads to a conjecture that the Pinto-Ermentrout model
supports a richer variety of bounded solutions than the model of Faye. The new
solutions are probably unstable, however, so their physical importance is unclear.

\subsection{Stability of these solutions}

A local stability result for the fast solution was proved by Faye. His proof
depends crucially on analysis of both the front and back of his solution, as
described above. The analysis of the back is less standard because of the
assumption that the jump down is at the knee. \ Here he relies on previous
work on similar problems. Presumably if the jump were above the knee (where,
however his existence proof is not claimed to apply), the stability analysis
would be easier.

\subsection{Can the hypotheses of Theorem \ref{thm1} be checked rigorously for
a specific $\left(  \varepsilon,c_{1}\right)  $?}

Condition (\ref{i}) says that the solution $p_{\varepsilon,c_{1}}$ is on the
unstable manifold $\mathcal{U}_{\varepsilon,c_{1}}^{+}$ of (\ref{1}) at the
equilibrium point $p_{0}.$ To check condition (\ref{ii}) we must follow
$p_{\varepsilon,c_{1}}$ until a point where $u_{\varepsilon,c_{1}}=0.$ Our
proposal for doing this is based on \cite{hassard}, where a similar procedure
was followed for the well known equations of Lorenz.

Using a standard ode solver we can arrive at a conjectured value for $\left(
\varepsilon,c_{1}\right)  .$ To begin analyzing $p_{\varepsilon,c_{1}}$
numerically we would expand the solutions around $p_{0}.$ A high order
expansion of $\mathcal{U}_{\varepsilon,c}^{+}$ results in algebraic
expressions which are then evaluated using rigorous numerical analysis based
on interval arithmetic. With this technique one hopes to show that
$\mathcal{U}_{\varepsilon,c}^{+}$ enters a very small box near $p_{0}$. For
the example in \cite{hassard} this box had a diameter of about $10^{-68}.$
This gives us an initial estimate accurate to (say) 68 significant digits.

From there, a rigorous ode solver, as described for example in \cite{mayer},
would be used to continue $p_{\varepsilon,c_{1}}$ until $w_{\varepsilon,c_{1}%
}=0.$ Whether this can be done cannot be determined ahead of time. \ One has
to run the solver. \ The number of guaranteed accurate digits decreases as the
integration proceeds.\ We then hope that some significant digits would be
maintained long enough to reach $u=0.$ It must be checked along the way that
$\max u_{\varepsilon,c_{1}}>u_{knee}.$

Based on the great sensitivity of the Lorenz equations to the initial
conditions, we expect that this would be easier for the Faye model than it was
in \cite{hassard}. We are not aware of any proposal to try to estimate
$\varepsilon$ for the method of geometric perturbation.

\appendix{}

\section{Proofs of Lemmas \ref{lem1}, \ref{lem2}, and \ref{lem10}.}

\subsection{Proofs of Lemma \ref{lem1} and Lemma \ref{lem2}}

We prove these results together. The linearization of (\ref{2}) around
$r_{0}=\left(  u_{0},u_{0},0\right)  $ is the system $Q^{\prime}=AQ$ with
\[
A=\left(
\begin{array}
[c]{ccc}%
-\frac{1}{c} & \frac{1}{c} & 0\\
0 & 0 & 1\\
-b^{2}q_{0}S^{\prime}\left(  u_{0}\right)  & b^{2} & 0
\end{array}
\right)  .
\]
The characteristic polynomial of $A$ is
\begin{equation}
f\left(  X\right)  =X^{3}+\frac{1}{c}X^{2}-b^{2}X-\frac{b^{2}}{c}(1-S^{\prime
}\left(  u_{0}\right)  q_{0}), \label{17}%
\end{equation}
Recall that $h(u)=\frac{u}{S(u)}$. Condition \ref{c2} implies that the
equation $q_{0}=h\left(  u\right)  $ has three solutions, $u_{0}<u_{m}<u_{+},$
and by Condition \ref{c0a}, $h^{\prime}\left(  u_{0}\right)  >0.$ It follows
that
\begin{equation}
q_{0}S^{\prime}\left(  u_{0}\right)  <1. \label{17a}%
\end{equation}
Therefore $f\left(  0\right)  <0.$ Also, $f^{\prime}\left(  0\right)
=-b^{2},$ and both $f^{\prime\prime}$ and $f^{\prime\prime\prime}$ are
positive for $X>0.$ Hence $A$ has one real positive eigenvalue. \ Also,
$f\left(  -\frac{1}{c}\right)  =\frac{b^{2}}{c}S^{\prime}\left(  u_{0}\right)
q_{0}>0,$ which implies that $A$ has two real negative eigenvalues.

Further, it is easily seen that there is an eigenvector corresponding to the
positive eigenvalue of $A$ which points into the positive octant. If
$r=\left(  u,v,w\right)  $ is a solution lying on the branch $\mathcal{U}%
_{0,c}^{+}$ of the unstable manifold of (\ref{2}) at $r_{0}$, then initially,
$u^{\prime},$ $v^{\prime}=w,$ and $w^{\prime}$ are positive. It follows from
the first two equations of (\ref{2}) that $v>u$ as \ long as $w>0$. \ Also,
$u\geq q_{0}S\left(  u\right)  $ for $u_{0}\leq u\leq u_{m}$ and so
$w^{\prime}>0$ while $u$ is in $\left(  u_{0},u_{m}\right]  .$ Hence there is
a first $t_{0}$ such that $u\left(  t_{0}\right)  =u_{m},$ and we can assume
that $t_{0}=0.$ We have now proved the assertions of the first and second
sentences of Lemma \ref{lem1}.

For the third sentence of Lemma \ref{lem1}, and for all of Lemma \ref{lem2},
we need a comparison lemma. For each $c>0,$ let $r_{c}=\left(  u_{c}%
,v_{c},w_{c}\right)  $ be the unique solution of (\ref{2}) on $\mathcal{U}%
_{0,c}^{+}$ such that $w_{c}>0$ on $(-\infty,0]$ and $u_{c}\left(  0\right)
=u_{m}.$ Suppose that $w_{c}>0$ on a maximal interval $(-\infty,T\left(
c\right)  ),$ where possibly $T\left(  c\right)  =\infty.$ Then in
$(-\infty,T\left(  c\right)  )$ we can consider $u$ and $w$ as functions of
$v$, letting $u_{c}\left(  t\right)  =U_{c}\left(  v_{c}(t\right)  )$ and
$w_{c}\left(  t\right)  =W_{c}\left(  v_{c}\left(  t\right)  \right)  .$ This
defines the functions $U_{c}$ and $W_{c}$ on the interval
\[
I_{c}=(u_{0},\lim_{t\rightarrow T\left(  c\right)  ^{-}}v_{c}\left(  t\right)
),
\]
and for $v$ in this interval,
\begin{equation}
U_{c}^{\prime}\left(  v\right)  =\frac{v-U_{c}\left(  v\right)  }%
{cW_{c}\left(  v\right)  },~W_{c}^{\prime}\left(  v\right)  =\frac
{b^{2}\left(  v-q_{0}S\left(  U_{c}\left(  v\right)  \right)  \right)  }%
{W_{c}\left(  v\right)  }. \label{a01}%
\end{equation}

\begin{lemma}
\label{lem12} If $d_{2}>d_{2}>0$ \ then $I_{d_{1}}\subset I_{d_{2}}.$ In the
interval $I_{d_{1}}$, \
\begin{equation}
\left.
\begin{array}
[c]{c}%
U_{d_{2}}<U_{d_{1}}\\
W_{d_{2}}>W_{d_{1}}%
\end{array}
\right.  . \label{a1a}%
\end{equation}

\end{lemma}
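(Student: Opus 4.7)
The plan is to establish (\ref{a1a}) for $v$ just above $u_0$ from the unstable eigenvector at $r_0$, and then propagate it across all of $I_{d_1}$ by a first-violation argument based on (\ref{a01}); the inclusion $I_{d_1}\subset I_{d_2}$ then drops out of the $W$-comparison.

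The initial step is to analyze how the unstable eigenstructure of $A$ at $r_0$ depends on $c$. The polynomial (\ref{17}) has a unique positive root $\lambda_{1}(c)$, and a direct calculation shows that the corresponding eigenvector is proportional to $(1,\,1+\lambda_{1}(c)c,\,\lambda_{1}(c)(1+\lambda_{1}(c)c))$. Consequently, as $v\to u_0^{+}$,
\[
U_c(v)-u_0=\frac{v-u_0}{1+\lambda_{1}(c)c}(1+o(1)),\qquad W_c(v)=\lambda_{1}(c)(v-u_0)(1+o(1)).
\]
Thus the initial inequalities $U_{d_2}(v)<U_{d_1}(v)$ and $W_{d_2}(v)>W_{d_1}(v)$ for $v$ slightly above $u_0$ reduce to the strict $c$-monotonicity of both $\lambda_{1}(c)$ and $\lambda_{1}(c)c$. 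Each is checked by implicit differentiation of (\ref{17}): elementary analysis of the cubic $f$ shows that $\lambda_{1}(c)$ lies to the right of the unique positive critical point of $f(\cdot,c)$, so $\partial_\lambda f(\lambda_{1}(c),c)>0$, and a short manipulation using (\ref{17a}) yields $\partial_c f(\lambda_{1}(c),c)<0$. For $\lambda_{1}(c)c$, the substitution $\mu=\lambda c$ converts (\ref{17}) into $\mu^{3}+\mu^{2}=b^{2}c^{2}(\mu+1-q_0 S'(u_0))$, to which the same implicit differentiation applies.

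For the propagation step, suppose (\ref{a1a}) first fails at some $\hat v\in I_{d_1}$. Three cases arise. If $U_{d_1}(\hat v)=U_{d_2}(\hat v)$ with $W_{d_2}(\hat v)>W_{d_1}(\hat v)>0$, the first equation of (\ref{a01}) gives $U_{d_2}'(\hat v)<U_{d_1}'(\hat v)$, since the numerators coincide while $d_{2}W_{d_2}(\hat v)>d_{1}W_{d_1}(\hat v)$, contradicting the fact that $U_{d_1}-U_{d_2}>0$ on the left of $\hat v$. If $W_{d_1}(\hat v)=W_{d_2}(\hat v)>0$ with $U_{d_2}(\hat v)<U_{d_1}(\hat v)$, the second equation of (\ref{a01}) combined with Condition \ref{c0} (strict monotonicity of $S$) gives $W_{d_2}'(\hat v)>W_{d_1}'(\hat v)$, again a contradiction. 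If both equalities hold simultaneously, the calculation of the first case still yields $U_{d_2}'(\hat v)<U_{d_1}'(\hat v)$ and the same contradiction. Once (\ref{a1a}) holds throughout $I_{d_1}$, the bound $W_{d_2}>W_{d_1}>0$ there, together with $v_{d_2}'=w_{d_2}>0$, ensures that the trajectory $r_{d_2}$ extends at least until $v_{d_2}$ reaches $\sup I_{d_1}$, giving $I_{d_1}\subset I_{d_2}$. The main technical obstacle is the $c$-monotonicity of $\lambda_{1}(c)$ and $\lambda_{1}(c)c$ needed to launch the comparison; everything downstream is routine.
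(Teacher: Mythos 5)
Your proof is correct and follows essentially the same approach as the paper's: launch (\ref{a1a}) near $r_0$ by comparing unstable eigenvectors, reducing to $c$-monotonicity of $\lambda_1(c)$ (and hence of $c\lambda_1(c)$) via implicit differentiation of the characteristic polynomial, then propagate across $I_{d_1}$ by the first-violation argument from (\ref{a01}), with $I_{d_1}\subset I_{d_2}$ dropping out of the $W$-comparison. The only differences are cosmetic — you differentiate $f$ directly and obtain the equivalent bound $\lambda_1(c) > b\sqrt{1 - q_0 S'(u_0)}$ where the paper works with $F = cf$ to get $\lambda_1(c) < b$, and you treat the monotonicity of $c\lambda_1(c)$ as a separate check (a harmless redundancy, since it is immediate from $\lambda_1(c)$ increasing and positive); your eigenvector $(1,\,1+c\lambda_1,\,\lambda_1(1+c\lambda_1))$ also silently corrects the paper's typographical $n_1 = n_2/(1+\lambda_1)$.
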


\begin{proof}
The first sentence follows by proving (\ref{a1a}) on the smaller of the two
intervals. We first show that these inequalities hold on some initial interval
$u_{0}<v<u_{0}+\delta$. This is seen by comparing unit eigenvectors
corresponding to the positive eigenvalues $\lambda_{1}\left(  d_{1}\right)  $
and $\lambda_{1}\left(  d_{2}\right)  $ of the linearizations of (\ref{2})
around $r_{0}$. Suppose that for a particular $c$ the eigenvector
corresponding to $\lambda_{1}\left(  c\right)  $ is $\left(  n_{1}\left(
c\right)  ,n_{2}\left(  c\right)  ,n_{3}\left(  c\right)  \right)  .$ Then
\begin{align*}
n_{1}\left(  c\right)   &  =\frac{n_{2}\left(  c\right)  }{\left(
1+\lambda_{1}\left(  c\right)  \right)  }.\\
n_{3}\left(  c\right)   &  =\lambda_{1}\left(  c\right)  n_{2}\left(
c\right)  .
\end{align*}
Inequalities (\ref{a1a}) follow near $r_{0}$ if $\lambda_{1}\left(
d_{2}\right)  >\lambda_{1}\left(  d_{1}\right)  .$ For this we turn to the
characteristic polynomial of $A,$ given in (\ref{17}) but now denoted by
$f\left(  X,c\right)  $.

It is easier to work with $F=cf,$ noting that $c>0.$ The positive eigenvalue
of $A$ is determined by the equation
\[
F\left(  \lambda_{1}\left(  c\right)  ,c\right)  =0
\]
and the condition $\lambda_{1}\left(  c\right)  >0.$ Then
\[
\frac{\partial F}{\partial X}\left(  \lambda_{1}\left(  c\right)  ,c\right)
\frac{d\lambda_{1}\left(  c\right)  }{dc}=-\frac{\partial F}{\partial
c}\left(  \lambda_{1}\left(  c\right)  ,c\right)  .
\]

Since $F\left(  0,c\right)  <0,$ $\frac{\partial F}{\partial X}\left(
0,c\right)  <0$ and $\frac{\partial^{2}F}{\partial X^{2}}\left(  X,c\right)
>0$ for $X\geq0,$ $\frac{\partial F}{\partial X}\left(  \lambda_{1}\left(
c\right)  ,c\right)  >0$. $\ $Also, $\frac{\partial F}{\partial c}\left(
\lambda_{1},c\right)  =$ $\lambda_{1}\left(  \lambda_{1}^{2}-b^{2}\right)  .$
It follows that $\frac{d\lambda_{1}\left(  c\right)  }{dc}>0$ if $\lambda
_{1}<b.$ But
\[
F\left(  b,c\right)  =\frac{b^{2}}{c}S^{\prime}\left(  u_{0}\right)  q_{0}>0,
\]
so indeed, $\lambda_{1}\left(  c\right)  <b.$

Therefore (\ref{a1a}) holds on some interval $\left(  u_{0},u_{0}%
+\delta\right)  .$ Suppose that the first inequality fails at a first $\hat
{v}\in I_{d_{1}},$ while the second holds over $(u_{0},\hat{v}].$ \ Then at
$\hat{v}$ , $U_{1}=U_{2},$ $W_{2}>W_{1}.$ But then, $U_{2}^{\prime}\left(
\hat{v}\right)  <U_{1}^{\prime}\left(  \hat{v}\right)  ,$ a contradiction
since $U_{2}<U_{1}$ on $(0,\hat{v}].$ A similar argument eliminates the other
possibilities, using the fact that $S$ is increasing, and this completes the
proof of the Lemma \ref{lem12}.
\end{proof}

\begin{corollary}
\label{cor1}If $T\left(  d_{1}\right)  =\infty$ then $T\left(  d_{2}\right)
=\infty.$
\end{corollary}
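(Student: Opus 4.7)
The plan is to argue by contradiction, assuming $T(d_1)=\infty$ but $T(d_2)<\infty$, and combining Lemma~\ref{lem12} with information extracted from the vector field of (\ref{2}) at the terminal point of $r_{d_2}$. Setting $(u^{*},v^{*},w^{*})=r_{d_2}(T(d_2))$, the sign change of $w_{d_2}$ at $T(d_2)$ forces $w_{d_2}'(T(d_2))\leq 0$, so the third equation of (\ref{2}) gives $v^{*}\leq q_0 S(u^{*})$. Also, the eigenvector computation at $r_0$ carried out earlier in the proof of Lemma~\ref{lem1} gives $v>u$ initially on $\mathcal{U}_{0,d_2}^{+}$; this strict inequality persists while $w>0$, because whenever $v=u$ and $w>0$ one has $(v-u)'=w>0$, which rules out a sign change of $v-u$. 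Passing to the limit at $T(d_2)$ yields $u^{*}\leq v^{*}$, and combining the two inequalities gives $u^{*}\leq q_0 S(u^{*})$.

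Next I would use Condition~\ref{c0a} to localize $u^{*}$. The monotonicity structure of $h(u)=u/S(u)$ implies that on $[u_0,\infty)$ the set where $q_0 S(u)\geq u$ is precisely $\{u_0\}\cup[u_m,u_+]$. Since $u_{d_2}$ is strictly increasing from $u_0$ on $(-\infty,T(d_2))$, we have $u^{*}>u_0$, forcing $u^{*}\in[u_m,u_+]$. Hence $v^{*}\leq q_0 S(u^{*})\leq q_0 S(u_+)=u_+$. Applying Lemma~\ref{lem12}, $I_{d_1}\subset I_{d_2}=(u_0,v^{*})$, so $\ell:=\sup I_{d_1}\leq v^{*}\leq u_+$.

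To finish, I would analyze $\ell=\lim_{t\to\infty}v_{d_1}(t)$. The case $\ell=\infty$ is ruled out by $\ell\leq u_+$. If $\ell<\infty$, then $v_{d_1}$ and $u_{d_1}$ are both bounded and monotone, and since $\int_0^{\infty}w_{d_1}\,dt=\ell-v_{d_1}(0)<\infty$ with $w_{d_1}$ uniformly continuous (as $w_{d_1}'$ is bounded), a Barbalat-type argument yields $w_{d_1}(t)\to 0$. Hence $r_{d_1}(t)$ converges to an equilibrium of (\ref{2}), and the only option consistent with $u_{d_1}(0)=u_m$ and $u_{d_1}$ increasing is $(u_+,u_+,0)$, so $\ell=u_+$. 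Then $v^{*}=u_+$, and either $u^{*}<u_+$, giving $q_0 S(u^{*})<u_+=v^{*}$ and contradicting $v^{*}\leq q_0 S(u^{*})$, or $u^{*}=u_+$, so that $r_{d_2}$ reaches the equilibrium $(u_+,u_+,0)$ in finite time, violating uniqueness of solutions of (\ref{2}). The only non-routine technical point is the convergence to equilibrium in the bounded case; everything else is an algebraic consequence of the two inequalities $v^{*}\leq q_0 S(u^{*})$ and $u^{*}\leq v^{*}$ combined with the inclusion $I_{d_1}\subset I_{d_2}$ from Lemma~\ref{lem12}.
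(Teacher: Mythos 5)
Your proof is correct. The paper states the corollary without proof, presenting it as an immediate consequence of Lemma~\ref{lem12}; in fact, as you correctly identify, there is a subtlety: Lemma~\ref{lem12} gives $I_{d_1}\subset I_{d_2}$, which settles the case $\sup I_{d_1}=\infty$ at once (a bounded $I_{d_2}$ cannot contain an unbounded interval), but the case $\sup I_{d_1}<\infty$ -- which occurs exactly at $d_1=c_0^{\ast}$, where $v_{d_1}\to u_{+}$ -- requires extra work. You supply that work: the vector-field inequalities $u^{\ast}\le v^{\ast}\le q_0 S(u^{\ast})$ at $T(d_2)$ pin down $u^{\ast}\in[u_m,u_+]$ and $v^{\ast}\le u_+$, the convergence-to-equilibrium argument (Barbalat plus the monotonicity of $u_{d_1},v_{d_1}$) forces $\ell=u_+$, and then the squeeze $u_+\le v^{\ast}\le q_0 S(u^{\ast})\le u_+$ forces $r_{d_2}(T(d_2))=(u_+,u_+,0)$, an equilibrium reached in finite time -- impossible by uniqueness. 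Two minor remarks: the step ``$u^{\ast}<u_+$ gives $q_0S(u^{\ast})<u_+$'' uses strict monotonicity of $S$, which is implicit in the paper's hypotheses but worth flagging; and you could shorten the final step by observing directly that $v^{\ast}\ge\ell=u_+=q_0S(u_+)\ge q_0S(u^{\ast})\ge v^{\ast}$, forcing equality throughout, rather than splitting into the two subcases. Overall this is a complete and careful argument for a statement the paper treats as self-evident.
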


\begin{lemma}
\label{lem12b}If $d_{2}\geq d_{1}$, $T\left(  d_{1}\right)  <\infty,$ and
$v_{d_{1}}\left(  T\left(  d_{1}\right)  \right)  >q_{0}S\left(
u_{knee}\right)  ,$ then either $T\left(  d_{2}\right)  =\infty$ or $u_{d_{2}%
}\left(  T\left(  d_{2}\right)  \right)  >u_{knee}.$
\end{lemma}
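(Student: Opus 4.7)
The plan is to deduce Lemma \ref{lem12b} almost entirely from the comparison in Lemma \ref{lem12} together with the sign of $w'$ at its first zero. First, I would dispose of the trivial case $d_2 = d_1$: if $T(d_1)<\infty$, then at $t=T(d_1)$ one has $w_{d_1}=0$ and $w_{d_1}'\le 0$, so by the third equation of \eqref{2},
\[
v_{d_1}(T(d_1)) \le q_0 S(u_{d_1}(T(d_1))).
\]
Combined with the hypothesis $v_{d_1}(T(d_1))>q_0 S(u_{knee})$ and the monotonicity of $S$, this gives $u_{d_1}(T(d_1))>u_{knee}$, as required.

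For $d_2>d_1$, I would reparametrize, as in \eqref{a01}, using $v$ as the independent variable. By Lemma \ref{lem12}, on the common interval of definition
\[
W_{d_2}(v)>W_{d_1}(v) \quad\text{and}\quad U_{d_2}(v)<U_{d_1}(v).
\]
Since $W_{d_1}$ vanishes first at $v^{\ast}:=v_{d_1}(T(d_1))$ while $W_{d_2}(v^{\ast})>0$, either $W_{d_2}$ never vanishes on its interval of existence (in which case $T(d_2)=\infty$ and there is nothing to prove), or $W_{d_2}$ has a first zero at some $\hat v>v^{\ast}$. In the latter case I claim $\hat v = v_{d_2}(T(d_2))$; this is because $v$ is strictly increasing wherever $w>0$, so the parametrizations in $v$ and $t$ see the same first zero.

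At $t=T(d_2)$ the same sign argument as above applies: $w_{d_2}(T(d_2))=0$ and $w_{d_2}'(T(d_2))\le 0$, hence
\[
v_{d_2}(T(d_2)) \le q_0 S\!\left(u_{d_2}(T(d_2))\right).
\]
But $v_{d_2}(T(d_2))=\hat v>v^{\ast}>q_0 S(u_{knee})$, so $q_0 S(u_{d_2}(T(d_2)))>q_0 S(u_{knee})$, and since $S$ is strictly increasing we conclude $u_{d_2}(T(d_2))>u_{knee}$.

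I expect the main technical point to be the bookkeeping between the $t$-parametrization and the $v$-parametrization, in particular making sure that Lemma \ref{lem12} really gives $W_{d_2}(v^{\ast})>0$ even though Lemma \ref{lem12} is phrased on $I_{d_1}$ and $v^{\ast}$ is its right endpoint. This requires checking that the strict inequality $W_{d_2}>W_{d_1}$ extends by continuity up to $v=v^{\ast}$, which is immediate from continuous dependence since the ODE for $W$ in \eqref{a01} is nonsingular as long as $W>0$, and $W_{d_2}$ stays bounded away from zero on every compact subinterval of $I_{d_1}$ by the comparison. Everything else is a direct consequence of the two facts already used in the proof of Lemma \ref{lem3a}: the ODE comparison, and the sign of $v-q_0 S(u)$ forced by $w$ vanishing from above.
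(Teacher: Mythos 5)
Your proof is correct and follows essentially the same route as the paper's: it invokes Lemma \ref{lem12} to compare the $v$-parametrized solutions, obtains $v_{d_2}(T(d_2)) \ge v_{d_1}(T(d_1))$ from the containment of intervals, and then extracts the conclusion from the sign of $w_{d_2}'$ at the first zero of $w_{d_2}$, which forces $v_{d_2}(T(d_2)) \le q_0 S(u_{d_2}(T(d_2)))$. The paper compresses this into a single contradiction argument: assuming $T(d_2)<\infty$ and $u_{d_2}(T(d_2))\le u_{knee}$, it computes $w_{d_2}'(T(d_2)) = b^2\bigl(v_{d_2}(T(d_2)) - q_0 S(u_{d_2}(T(d_2)))\bigr) \ge b^2\bigl(v_{d_1}(T(d_1)) - q_0 S(u_{knee})\bigr) > 0$, contradicting that $T(d_2)$ is the first zero of $w_{d_2}$. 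Your separate treatment of $d_2 = d_1$ is harmless but unnecessary, since the same inequality chain covers it, and your worry about whether $W_{d_2}(v^\ast)>0$ strictly is also not needed: all the argument requires is $v_{d_2}(T(d_2)) \ge v^\ast$, which follows directly from $I_{d_1}\subset I_{d_2}$ in Lemma \ref{lem12} (and you already have strict inequality $v^\ast > q_0 S(u_{knee})$ to spare).
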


\begin{proof}
Suppose that $T\left(  d_{2}\right)  <\infty$ and $u_{d_{2}}\left(  T\left(
d_{2}\right)  \right)  \leq u_{knee}.$ Lemma \ref{lem12} implies that
$v_{d_{2}}\left(  T\left(  d_{2}\right)  \right)  \geq v_{d_{1}}\left(
T\left(  d_{1}\right)  \right)  $ and so
\[
w_{d_{2}}^{\prime}\left(  T\left(  d_{2}\right)  \right)  =b^{2}\left(
v_{d_{2}}\left(  T\left(  d_{2}\right)  \right)  -q_{0}S\left(  u_{d_{2}%
}\left(  T\left(  d_{2}\right)  \right)  \right)  \right)  \geq b^{2}\left(
v_{d_{2}}\left(  T\left(  d_{2}\right)  \right)  -q_{0}S\left(  u_{knee}%
\right)  \right)  >0,
\]
a contradiction because $T\left(  d_{2}\right)  $ is the first zero of
$w_{d_{2}}.$ \ 
\end{proof}

Next we must show the existence of $c_{0}^{\ast}.$

\begin{lemma}
\label{lem13a} For sufficiently large $c,$ $T\left(  c\right)  =\infty.$
\end{lemma}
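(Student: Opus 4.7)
The plan is to show that for $c$ sufficiently large the function $W_c(v)$ in \eqref{a01} stays positive on all of $(u_0,\infty)$, which in turn forces $T(c)=\infty$. The intuition is that as $c\to\infty$ the first equation of \eqref{a01} formally forces $U_c\equiv u_0$, and the reduced equation $WW'=b^{2}(v-q_0 S(u_0))=b^{2}(v-u_0)$ has the clean explicit solution $W_\infty(v)=b(v-u_0)$, which is positive for all $v>u_0$. I therefore aim to establish the linear lower bound $W_c(v)\ge\tfrac{b}{2}(v-u_0)$ on $(u_0,\infty)$ for all large $c$.

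\textbf{Small and moderate $v$.} First, I would do the start-up analysis at $v=u_0$. The eigenvector calculation in the proof of Lemma \ref{lem12} shows that the tangent direction to $\mathcal{U}_{0,c}^{+}$ at $r_0$ is proportional to $(1,\,1+\lambda_1(c)c,\,\lambda_1(c)(1+\lambda_1(c)c))$, so $W_c(v)/(v-u_0)\to\lambda_1(c)$ and $(U_c(v)-u_0)/(v-u_0)\to 1/(1+\lambda_1(c)c)$ as $v\to u_0^{+}$. An elementary look at the characteristic polynomial $f$ (of the kind in the proof of Lemma \ref{lem12}) gives $\lambda_1(c)<b$ (since $f(b)=b^2 q_0 S'(u_0)/c>0$) together with $\lambda_1(c)\to b$ as $c\to\infty$. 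Hence for any fixed small $\delta>0$ and all $c$ sufficiently large, $W_c(v)\ge (3b/4)(v-u_0)$ and $U_c(v)\le u_0+\delta$ on $[u_0,u_0+\delta]$. Next, on the compact interval $[u_0+\delta,2]$, \eqref{a01} is a regular ODE that depends smoothly on the parameter $1/c$, and the initial data at $v=u_0+\delta$ converge to $(u_0,b\delta)$ as $c\to\infty$. Standard continuous dependence then yields $(U_c,W_c)\to(u_0,b(v-u_0))$ uniformly on $[u_0+\delta,2]$, so $W_c(v)\ge\tfrac{b}{2}(v-u_0)$ on $[u_0,2]$ for all large enough $c$.

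\textbf{Barrier for large $v$.} To extend the bound to $v\ge 2$, I would use a barrier argument with $\phi(v)=\tfrac{b}{2}(v-u_0)$. Suppose $W_c$ first meets $\phi$ at some $v^{*}\ge 2$. Then $W_c'(v^{*})\le\phi'(v^{*})=b/2$, while
\[
W_c'(v^{*})=\frac{b^{2}(v^{*}-q_0 S(U_c(v^{*})))}{\phi(v^{*})}>\frac{2b(v^{*}-q_0)}{v^{*}-u_0},
\]
using $S<1$. The inequality $2b(v^{*}-q_0)/(v^{*}-u_0)>b/2$ reduces to $3v^{*}>4q_0-u_0$, which holds for $v^{*}\ge 2$ because $q_0<1$ and $u_0>0$. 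This contradiction gives $W_c(v)\ge\phi(v)>0$ for all $v\ge 2$ in $I_c$.

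\textbf{Conclusion and main difficulty.} Together the two regimes yield $W_c>0$ on $(u_0,\infty)$. Since $S$ is bounded, solutions of \eqref{2} cannot blow up in finite time, so the linear lower bound on $W_c$ forces $v_c(t)\to\infty$ only as $t\to\infty$; equivalently, $T(c)=\infty$. The main obstacle is the intermediate step: the limit $c\to\infty$ is singular in the original $t$ variable (the equation $u'=(v-u)/c$ degenerates to $u'=0$, losing a dimension), so the argument must be carried out in the $v$ variable, where the perturbation becomes regular and the explicit shape of $W_\infty$ is available. Once past this technical point, the barrier estimate for large $v$ is nearly automatic because $q_0 S(U_c)<q_0<1$ regardless of how $U_c$ behaves.
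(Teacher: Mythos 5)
Your proposal is correct in structure and reaches the right conclusion, by a route related to but different in detail from the paper's. The paper works with the slope $dU_c/dW_c$ and shows that, for $c$ large, $u_c$ cannot reach $u_m$ before $v_c$ passes $1$, after which $w_c'>0$ is automatic; you instead construct an explicit linear sub\mbox{-}solution $W_c\ge (b/2)(v-u_0)$ on all of $(u_0,\infty)$. Both schemes are viable, and yours has the advantage of producing a quantitative lower bound on $W_c$.

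One step is under-justified as written. From $W_c(v)/(v-u_0)\to\lambda_1(c)$ as $v\to u_0^{+}$ together with $\lambda_1(c)\to b$, you assert $W_c(v)\ge(3b/4)(v-u_0)$ on a $c$-\emph{independent} interval $[u_0,u_0+\delta]$. The first convergence is a pointwise statement at the endpoint for each fixed $c$; it does not by itself exclude the possibility that the radius on which the linearization is accurate shrinks as $c\to\infty$, and this ratio bound near $u_0$ is exactly where both your argument and the paper's must confront the degeneracy at the equilibrium. The simplest patch uses machinery already in the appendix: by Lemma~\ref{lem12}, $W_c$ is increasing in $c$, so it suffices to fix one $c_0$ with $\lambda_1(c_0)>3b/4$, choose $\delta>0$ so that $W_{c_0}(v)\ge(3b/4)(v-u_0)$ on $(u_0,u_0+\delta]$, and then observe $W_c\ge W_{c_0}$ there for all $c\ge c_0$, with a $\delta$ that is now uniform. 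Alternatively, one can run a single barrier argument with $\phi(v)=(3b/4)(v-u_0)$ on all of $(u_0,\infty)$: at a first touching point $v_1$ the condition $\bigl(W_c/(v-u_0)\bigr)'\le0$ reduces to $q_0S(U_c(v_1))-u_0\ge(7/16)(v_1-u_0)$, while $U_c'=(v-U_c)/(cW_c)$ combined with the assumed lower bound gives $U_c(v_1)-u_0\le 4(v_1-u_0)/(3bc)$, which is incompatible for $c$ large (using $q_0S<q_0<1$ when $v_1$ is large and a local Lipschitz bound for $S$ when $v_1$ is small); this removes the need for your separate continuous-dependence step on $[u_0+\delta,2]$. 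With the start-up step repaired, your barrier for $v\ge2$ is correct, and $W_c>0$ on $(u_0,\infty)$ indeed forces $T(c)=\infty$.
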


\begin{proof}
From (\ref{a01}), in the interval $\left(  -\infty,T\left(  c\right)  \right)
,$ where $v_{c}$ is increasing, $v_{c}>u_{c}.$ Recall that
\[
\frac{u}{S\left(  u\right)  }>q_{0}%
\]
in $\left(  u_{0},u_{m}\right)  .$ As long as $u_{0}<U_{c}\left(
v_{c}\right)  <u_{m}$ and $v_{c}<1,$
\[
\frac{dU_{c}}{dW_{c}}=\frac{v-U_{c}}{cb^{2}S\left(  U_{c}\right)  \left(
\frac{v}{S\left(  U_{c}\right)  }-q_{0}\right)  }<\frac{1}{c\eta\left(
\frac{U_{c}}{S\left(  U_{c}\right)  }-q_{0}\right)  }.
\]

This implies that for large $c$, in the interval where $u_{0}<u_{c}<u_{m},$
$w_{c}$ grows rapidly and so, in turn, does $v_{c.}$. \ In particular,
$v_{c}>1$ before $u_{c}=u_{m},$ and this implies that $T\left(  c\right)
=\infty.$
\end{proof}

Now we wish to show that for small $c>0,$ $w_{c}<0$ before $v_{c}=1.$ It is in
this step that Condition \ref{c3} is used.

\begin{lemma}
\label{lem14} There is a $\bar{w}>0$ such that for any $c>0,$ if
$|w_{c}\left(  \tau\right)  |>\bar{w}$ and $0<v_{c}\left(  \tau\right)  <1,$
then $|w_{c}|>\bar{w}$ for $t>\tau$ and $v_{c}$ leaves the interval $(0,1)$.
If $w_{c}(\tau)>\bar{w}$ then $v_{c}$ crosses $1$, while if $w_{c}(\tau
)<-\bar{w}$ then $v_{c}$ crosses $0$.
\end{lemma}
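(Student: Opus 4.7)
The plan is to use an energy-style estimate showing that when $|w|$ is initially large, it remains bounded away from zero throughout the time $v$ is in the strip $(0,1)$, while $v$ is driven out of that strip in finite time. The essential a priori bound, immediate from the third equation of (\ref{2}), is that $|w'| = b^2|v - q_0 S(u)| \leq b^2$ whenever $v \in [0,1]$, since $0 < q_0 S(u) < q_0 < 1$.

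For the energy estimate, I would multiply $w' = b^2(v - q_0 S(u))$ by $w$, use $v' = w$ to change variables from $t$ to $v$, and obtain
\[
\frac{d(w^2)}{dv} \;=\; 2b^2\bigl(v - q_0 S(u)\bigr)
\]
wherever $w \neq 0$. Integrating from $v_0 = v(\tau)$ to a later value $v = v(t) \in [v_0, 1]$ in the case $w > 0$, and using $0 < q_0 S(u) < q_0$, yields
\[
w(t)^2 \;\geq\; w(\tau)^2 + b^2(v - v_0)(v + v_0 - 2q_0) \;\geq\; w(\tau)^2 - b^2 q_0^2,
\]
valid as long as $w$ keeps its initial sign; the right-hand bound comes from minimizing the middle quadratic in $v$ at $v = q_0$.

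I would then fix $\bar{w}$ large enough that $\bar{w}^2 - b^2 q_0^2$ dominates the comparison needed. The hypothesis $|w(\tau)| > \bar{w}$ gives $w(t)^2 > 0$ throughout the strip, so $w$ cannot vanish while $v \in (0,1)$. Consequently $w$ keeps its sign and $|w|$ stays bounded below by a positive constant, so $v' = w$ has fixed sign and $|v'|$ bounded below; hence $v$ must exit $(0,1)$ in finite time — through $v=1$ if $w(\tau) > \bar{w}$, through $v = 0$ if $w(\tau) < -\bar{w}$.

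Once $v$ has exited, the region $\{v > 1,\ w > 0\}$ (respectively $\{v < 0,\ w < 0\}$) is positively invariant by Proposition \ref{prop1}, and inside each of these regions $w'$ has the sign of $w$ (since $v > 1 > q_0 > q_0 S(u)$ in the first case, and $v < 0 < q_0 S(u)$ in the second). Therefore $|w|$ is nondecreasing for all later $t$, and the bound on $|w|$ propagates to all $t > \tau$. The main technical delicacy is the quantitative matching between hypothesis and conclusion — the energy argument loses a constant of order $b^2$ — which is absorbed by choosing $\bar{w}$ sufficiently large from the outset, as permitted by the existential form of the statement.
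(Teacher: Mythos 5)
Your proof is correct in its essential content and takes a genuinely different route from the paper. The paper's proof is a one-line time estimate: it sets $\bar w=\sqrt2\,b$ explicitly, uses $|w'|\le b^2$ (valid while $v\in[0,1]$) to conclude $w(\tau+s)\ge\sqrt2\,b-b^2s$, and integrates once more to get $v(\tau+s)\ge v(\tau)+\sqrt2\,b\,s-\tfrac12 b^2s^2$, which forces $v$ past $1$ before $s=\sqrt2/b$. Your version instead works in phase space with the energy functional $w^2$ as a function of $v$, integrating $d(w^2)/dv=2b^2(v-q_0S(u))$ to bound $w^2$ from below by $w(\tau)^2-b^2q_0^2$ on the strip; this gives a time-independent lower bound on $|w|$, whence $v$ exits in finite time. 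Both are valid; the paper's has the small advantage of producing the explicit constant $\sqrt2\,b$ (which is referenced by name later in the appendix), while yours yields a quantitative $|w|$-bound in terms of $v$ that is more in the Lyapunov spirit and would adapt readily to perturbed systems. One caution you should note: the final sentence of your argument (that the bound ``propagates to all $t>\tau$'') only yields $|w|>\sqrt{\bar w^2-b^2q_0^2}$, not $|w|>\bar w$; at the moment $v$ exits $(0,1)$ the value of $|w|$ may have dropped below $\bar w$, and although $|w|$ is nondecreasing thereafter it need not have recovered. The paper's own proof has exactly the same gap with respect to the clause ``$|w_c|>\bar w$ for $t>\tau$'' of the lemma statement, and that clause does not appear to be used elsewhere — what is actually invoked later is that $v$ leaves $(0,1)$ and then diverges by invariance — so this is a shared imprecision in the statement rather than a defect particular to your argument.
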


\begin{proof}
Let $\bar{w}=\sqrt{2}b$. Since $\left\vert w^{\prime}\right\vert \leq b^{2},$
if $w\left(  \tau\right)  =\sqrt{2}b,$ then for $s>0,$ $w\left(
\tau+s\right)  \geq\sqrt{2}b-b^{2}s,$ from which follows that $v$ must leave
$\left(  0,1\right)  $ before $s=\frac{\sqrt{2}}{b}.$ \ 
\end{proof}

\begin{lemma}
\label{lem15}If $0<w_{c}\leq\bar{w}$ on $(-\infty,\tau]$ then $0<v_{c}%
-u_{c}<c\bar{w}$ on this interval. If $\left\vert w_{c}\right\vert \leq\bar
{w}$ on $(-\infty,\sigma],$ then $\left\vert v_{c}-u_{c}\right\vert <c\bar{w}$
on this interval.
\end{lemma}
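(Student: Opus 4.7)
The plan is to reduce this to a linear first-order ODE for the auxiliary variable $z_c = v_c - u_c$ and solve it explicitly. From the first two equations of (\ref{1}) one has $z_c' = v_c' - u_c' = w_c - z_c/c$, so $z_c$ satisfies the linear equation $c z_c' + z_c = c w_c$, with integrating factor $e^{t/c}$. Since $p_c$ lies on the unstable manifold $\mathcal{U}_{\varepsilon,c}^{+}$ at $p_0$, Lemma \ref{lem2b} gives $u_c(t), v_c(t) \to u_0$ as $t \to -\infty$; moreover this convergence is exponential (governed by the positive eigenvalue $\lambda_1(c,\varepsilon)$), so $e^{t/c} z_c(t) \to 0$ as $t \to -\infty$. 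Multiplying the ODE by $e^{t/c}$ and integrating from $-\infty$ to $t$ therefore yields the representation
\begin{equation*}
z_c(t) = e^{-t/c} \int_{-\infty}^{t} e^{s/c}\, w_c(s)\, ds.
\end{equation*}

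For the first conclusion, assume $0 < w_c \leq \bar{w}$ on $(-\infty, \tau]$. The strict positivity of $w_c$ and the positive kernel $e^{(s-t)/c}$ give $z_c(t) > 0$ on this interval, while $w_c(s) \leq \bar{w}$ gives the upper bound
\begin{equation*}
z_c(t) \leq \bar{w}\, e^{-t/c}\int_{-\infty}^{t} e^{s/c}\, ds = c\bar{w}.
\end{equation*}
The inequality is strict because equality would force $w_c \equiv \bar{w}$ (almost everywhere, hence everywhere by continuity) on $(-\infty, t]$, contradicting $w_c(s)\to 0$ as $s\to -\infty$. The second conclusion follows from exactly the same formula applied to $|w_c|$: the bound $|w_c(s)| \leq \bar{w}$ yields $|z_c(t)| \leq c\bar{w}$, with the same continuity argument giving strict inequality.

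There is essentially no obstacle here; the only point requiring a word of care is the justification of the boundary term at $-\infty$ when solving the linear ODE, but this is immediate from the exponential decay of solutions on $\mathcal{U}_{\varepsilon,c}^{+}$ established in Lemma \ref{lem2b}. All of the earlier ingredients (the sign of $w_c$ near $-\infty$, and the convergence to $p_0$) are already in place, so the argument amounts to one application of the integrating factor plus a continuity argument for strictness.
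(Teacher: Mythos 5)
Your proof is correct, and it rests on exactly the same observation as the paper's, namely that $z_c = v_c - u_c$ satisfies the linear first-order ODE $z_c' = w_c - z_c/c$ with $z_c \to 0^+$ as $t\to-\infty$. The only difference is presentational: you solve the ODE explicitly by variation of constants and bound the integral, whereas the paper argues by a trapping-region observation ($(v-u)'<0$ once $v-u>c\bar w$, and $(v-u)'>0$ at $v-u=0$); your explicit formula arguably gives a cleaner account of why the bound $c\bar w$ is strict. One small slip: the lemma is stated for the fast system (\ref{2}), so the relevant object is $r_c$ on $\mathcal{U}_{0,c}^+$, not $p_c$ on $\mathcal{U}_{\varepsilon,c}^+$, but the argument is identical.
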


\begin{proof}
With $r=r_{c}$, $\left(  v-u\right)  ^{\prime}=w-\frac{v-u}{c}\leq\bar
{w}-\frac{v-u}{c},$ so if $v-u>c\bar{w}$ then $\left(  v-u\right)  ^{\prime
}<0.$ Also, if $v-u=0$ in $\left(  -\infty,\tau\right)  $ then$\left(
v-u\right)  ^{\prime}>0.$ Since $v-u\rightarrow0^{+}$ as $t\rightarrow-\infty$
, the first sentence of the lemma follows and the second is similar.
\end{proof}

Based on this lemma, we consider, in addition to (\ref{2}), the system%
\begin{equation}
\left.
\begin{array}
[c]{c}%
v^{\prime}=w\\
w^{\prime}=b^{2}\left(  v-q_{0}S\left(  v\right)  \right)
\end{array}
\right.  , \label{10}%
\end{equation}
This system has equilibrium points at $\left(  u_{0},0\right)  ,$ $\left(
u_{m},0\right)  ,$ and $\left(  u_{+},0\right)  ,$ and a standard phase plane
analysis, assuming Condition \ref{c3}, shows that the positive branch
$\mathcal{U}_{0,0}^{+}$ of unstable manifold of (\ref{10}) at $\left(
u_{0},0\right)  $ is homoclinic. Also we consider the system
\begin{equation}
\left.
\begin{array}
[c]{c}%
v^{\prime}=w\\
w^{\prime}=b^{2}\left(  v-q_{0}S\left(  v-\hat{c}\right)  \right)
\end{array}
\right.  , \label{11}%
\end{equation}
for small $\hat{c}.$ Choose $\hat{c}$ so small that this system also has three
equilibrium points, and a homoclinic orbit based at the left most of these.
\ This orbit entirely encloses the homoclinic orbit of (\ref{10}). \ 

Finally we consider the system
\begin{equation}
\left.
\begin{array}
[c]{c}%
v^{\prime}=w\\
w^{\prime}=b^{2}\left(  v-q_{0}S\left(  v+\hat{c}\right)  \right)
\end{array}
\right.  , \label{12a}%
\end{equation}

For sufficiently small $\hat{c}$ this system also has a homoclinic orbit.
\ This orbit lies entirely inside the homoclinic orbit of (\ref{10}). However,
the lower left branch $\mathcal{U}_{0,0}^{-}$ of the unstable manifold of this
system crosses the homoclinic orbits of (\ref{10}) and (\ref{11}), and this
branch will play a role below. \ (See Figure 12.)

\begin{figure}[h]
\includegraphics[height=2 in, width =5  in]{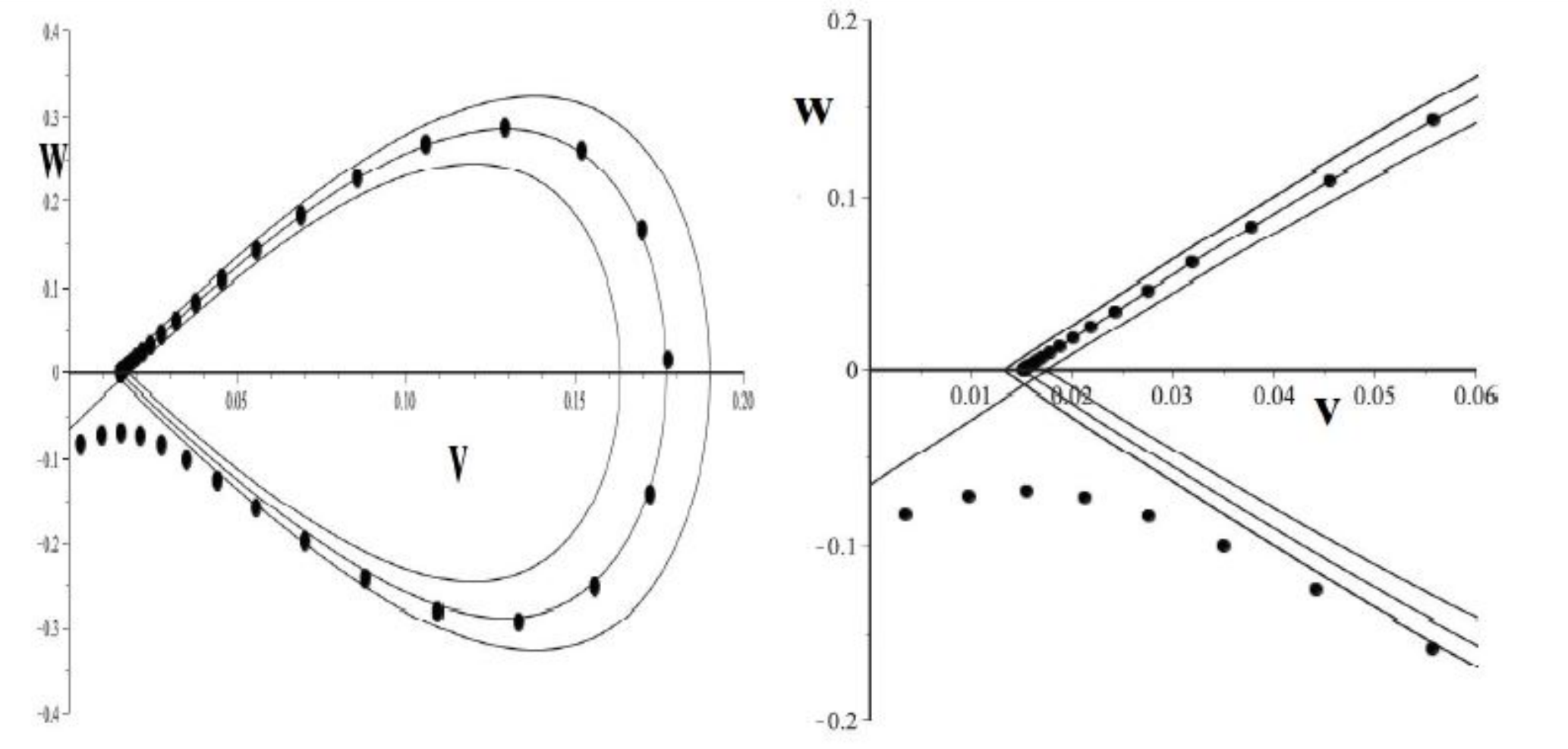} \label{figure77}%
\caption{ homoclinic orbits of, from inner to outer,(\ref{12a}), (\ref{10}),
and (\ref{11}), part of $\mathcal{U}_{0,0}^{-}$ for (\ref{12a}), and an orbit
of (\ref{2}) (dotted)}%
\end{figure}

From now on, $\left(  v_{1},w_{1}\right)  ,$ $\left(  v_{2},w_{2}\right)  ,$
and $\left(  v_{3},w_{3}\right)  $ will denote the unique solutions of the
systems (\ref{10}), (\ref{11}), and (\ref{12a}) respectively which lie on the
homoclinic orbits of those systems and satisfy $v_{i}\left(  0\right)
=u_{m}.$ In each of these cases, if $(v,w)$ is homoclinic then $|w|$ is
bounded by $\bar{w}$. This follows from the definition of $\bar{w}$ in Lemma
\ref{lem14}, the results of which also apply to (\ref{11}) and (\ref{12a}),
with the same proofs. If $|w|$ exceeds $\bar{w}$ then $p$ is not bounded.

Recall that in Lemmas \ref{lem1} and \ref{lem2}, $r_{0,c}=\left(
u_{0,c},v_{0,c},w_{0,c}\right)  $ denoted the unique solution on the unstable
manifold $\mathcal{U}_{0,c}$ such that $u_{0,c}\left(  0\right)  =u_{m}$ and
$w_{0,c}>0$ on $(-\infty,0].$ In the rest of this proof we will denote this
solution by $\left(  u,v,w\right)  .$ By Lemma \ref{lem15} we can choose $c$
so small that if $t_{1}$ is the first zero of $w_{c},$ then
\begin{equation}
v-\hat{c}<u<v \label{13}%
\end{equation}
on $(-\infty,t_{1}].$

\begin{lemma}
\label{lem13}Condition \ref{13} implies that if $w_{c}>0$ on $(-\infty,t],$
then $\left(  v_{c}\left(  t\right)  ,w_{c}\left(  t\right)  \right)  $ is in
the annular region between the orbit of $\left(  v_{1},w_{1}\right)  $ and the
orbit of $\left(  v_{2},w_{2}\right)  .$
\end{lemma}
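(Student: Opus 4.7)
The plan is to exploit the fact that both comparison systems (\ref{10}) and (\ref{11}) are planar Hamiltonian, and to use Condition (\ref{13}) together with monotonicity of $S$ to control how their conserved energies evolve along the projection of $r_c$ onto the $(v,w)$-plane. Let $V_1(v) = \int_{u_0}^{v} b^2(s - q_0 S(s))\,ds$ and set $E_1(v,w) = \tfrac{1}{2}w^2 - V_1(v)$; define $V_2$ and $E_2$ analogously for (\ref{11}), with base point at the leftmost equilibrium $v_2^{*}$ so that $V_2(v_2^{*}) = 0$. Each $E_i$ is constant along orbits of its respective system, and the homoclinic loop of $(v_i,w_i)$ is the connected component of the level set $\{E_i = 0\}$ that emanates from its base saddle.

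I would then compute $dE_i/dt$ along the projection of $r_c$. Using the $(v,w)$-equations of (\ref{2}) together with the definitions of $V_1,V_2$,
\begin{align*}
\frac{d E_1}{d t} &= w\,b^2(v - q_0 S(u)) - b^2(v - q_0 S(v))\,w = b^2 q_0 w\bigl(S(v) - S(u)\bigr),\\
\frac{d E_2}{d t} &= b^2 q_0 w\bigl(S(v-\hat c) - S(u)\bigr).
\end{align*}
Condition (\ref{13}) supplies $v - \hat c < u < v$ on the interval of interest, and since $S$ is increasing this yields $S(v-\hat c) < S(u) < S(v)$. Combined with $w > 0$, we obtain $dE_1/dt > 0$ and $dE_2/dt < 0$ throughout the interval in question.

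Finally I would locate the projected orbit by examining the limit $t\to -\infty$. Because $r_c(t)\to(u_0,u_0,0)$ along the unstable manifold from Lemma \ref{lem1}, the projection tends to $(u_0,0)$, giving $E_1(v_c(t),w_c(t))\to 0$ and $E_2(v_c(t),w_c(t))\to -V_2(u_0)$. Since the homoclinic of (\ref{11}) strictly encloses the homoclinic of (\ref{10}), the point $(u_0,0)$ lies strictly inside the outer loop, i.e.\ in the region $\{E_2 < 0\}$, so $V_2(u_0) > 0$. The monotonicity computed above then propagates these inequalities: $E_1(v_c,w_c) > 0$ (strictly outside the inner homoclinic, using that inside that loop $E_1 < 0$) and $E_2(v_c,w_c) < 0$ (strictly inside the outer homoclinic), placing the projected orbit in the desired annulus.

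The main obstacle is the geometric bookkeeping --- justifying that the strict inequality $E_1 > 0$ really corresponds to the \emph{exterior} of the inner loop (this uses that $(u_0,0)$ is a saddle of (\ref{10}) with $V_1$ having a local minimum at $u_0$ and a local maximum at $u_m$, so the interior level has $E_1 < 0$), and similarly for $E_2$ and the outer loop. Once these sign conventions are pinned down (using Condition \ref{c3} for the existence and shape of the inner loop, and the choice of $\hat c$ for the outer), together with the exponential decay of $r_c$ to the equilibrium (which renders $\dot E_i$ integrable near $-\infty$ and legitimizes the limits), the argument reduces to the one-line energy monotonicities above.
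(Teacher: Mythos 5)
Your proof is correct, and it takes a genuinely different route from the paper. The paper argues directly in the $(v,w)$ phase plane, treating $w_c$ as a function of $v_c$ and comparing the slope $\tfrac{dw_c}{dv_c}=\tfrac{b^2(v_c-q_0S(u_c))}{w_c}$ against the slopes of the inner and outer homoclinic loops at a putative first crossing; the sandwich $v-\hat c<u<v$ and the monotonicity of $S$ force the crossing to be in the wrong direction, a contradiction. The paper also needs a separate eigenvalue/eigenvector comparison near $r_0$ to seed this argument, i.e.\ to show the orbit starts inside the annulus as $t\to-\infty$. Your version replaces both steps with a single Lyapunov computation: you exhibit the conserved energies $E_1,E_2$ of the two planar Hamiltonian comparison systems, verify $\dot E_1=b^2q_0w\,(S(v)-S(u))>0$ and $\dot E_2=b^2q_0w\,(S(v-\hat c)-S(u))<0$ under the very same sandwich, and replace the eigenvector comparison by the trivial limits $E_1\to0$, $E_2\to-V_2(u_0)<0$ as $t\to-\infty$ (the latter using, as the paper does just before the lemma, that the outer loop strictly encloses the inner one, hence $(u_0,0)$). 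This is a cleaner and somewhat more robust packaging of the same underlying inequality. Two small things are worth pinning down explicitly in your write-up: the strictness $E_1(t)>0$ should be justified by $E_1(t)=\int_{-\infty}^t\dot E_1\,d\sigma$ together with strict positivity of the integrand (this needs $S$ strictly increasing, which holds for the class considered); and the identification of $\{E_1>0\}$ with the exterior of the inner loop and $\{E_2<0\}$ with the interior of the outer loop should be restricted to the half-plane $v>u_0$ (resp.\ $v>v_2^*$), where it is indeed exact --- but since $v_c$ is increasing from $u_0$ this is automatic. With those remarks your argument is complete and stands as a valid alternative to the paper's slope comparison.
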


\begin{proof}
The proof is similar to the proof of Lemma \ref{lem12}. As long as $w_{c}>0,$
$u_{c}$ and $w_{c}$ can be considered functions of $v_{c}.$ Also,
\begin{equation}
\frac{dw_{c}}{dv_{c}}=\frac{b^{2}\left(  v_{c}-q_{0}S\left(  u_{c}\right)
\right)  }{w_{c}}. \label{14}%
\end{equation}
By considering the eigenvalues of the linearizations of (\ref{10}) as
functions of $c$ we can show, using (\ref{17a}), that for large negative $t,$
$\left(  v\left(  t\right)  ,w\left(  t\right)  \right)  $ lies in the claimed
annular region. Suppose that for some first $\tau,$ $\left(  v_{c}\left(
\tau\right)  ,w_{c}\left(  \tau\right)  \right)  $ lies on the upper boundary
of this region, that is, on the homoclinic orbit of (\ref{11}), at a point
where $w>0.$ \ The slope of this homoclinic orbit at this point is
\[
\frac{dw_{2}}{dv_{2}}=\frac{b^{2}\left(  v_{2}-q_{0}S\left(  v_{2}-\hat
{c}\right)  \right)  }{w_{2}}=\frac{b^{2}\left(  v_{c}\left(  \tau\right)
-q_{0}S\left(  v_{c}\left(  \tau\right)  -\hat{c} \right)  \right)  }%
{w_{c}\left(  \tau\right)  }.
\]
But $u_{c}\left(  \tau\right)  >v_{c}\left(  \tau\right)  -\hat{c} $ (since
$u_{c}<v_{c}$ as long as $w_{c}\geq0$), and since $S$ is increasing and
$w_{c}\left(  \tau\right)  >0$ , it follows from (\ref{14}) that $\frac
{dw_{c}}{dv_{c}}<\frac{dv_{2}}{dw_{2}},$ and so the curve $\left(  v_{c}%
,w_{c}\right)  $ arrives at this point from outside of the annular region,
contradicting the definition of $\tau$. In a similar manner it is shown that
$\left(  v_{c},w_{c}\right)  $ lies above the orbit of $\left(  v_{1}%
,w_{1}\right)  $ as long as $w_{c}>0.$ This uses the bound $u_{c}<v_{c}$ as
long as $w_{c}>0$.

A similar comparison shows that if $t=t_{1}\left(  c\right)  $ is the first
point where $w_{c}\left(  t\right)  =0$ then $v_{c}\left(  t_{1}\left(
c\right)  \right)  $ is an increasing function of $c,$ for $0<c<c_{0}^{\ast},$
and that for $c>c_{0}^{\ast},$ $w_{c}>0$ on $R.$ \ This shows the uniqueness
of $c_{0}^{\ast}.$ To complete the proof of Lemma \ref{lem2} we show that from
the first $t_{1}$ where $w_{c}\left(  t_{1}\right)  =0,$ the curve $\left(
v_{c},w_{c}\right)  $ lies either to the right or below the orbit $\left(
v_{3},w_{3}\right)  ,$ and also below the left branch of the unstable manifold
of (\ref{12a}), at least up to the point where $w_{c}=-\bar{w}$ ($=-\sqrt{2}b$
). (If $w_{c}=-\bar{w},$ then, as in Lemma \ref{lem14}, $v_{c}$ becomes
negative, which is what we are trying to show. See Figure 8.) This follows by
the same sort of comparison as above, now comparing $\left(  v_{c}%
,w_{c}\right)  $ with the lower half of the unstable manifold of (\ref{12a}).
This is possible because by Lemma \ref{lem15}, $u_{c}<v_{c}+\hat{c}$ as long
as $-\bar{w}<w_{c}<0$.

To prove that $c_{0}^{\ast}$ as defined in Lemma \ref{lem2} exists, we note
that the set of $c$ \ such that $w\left(  s_{1}\right)  <0$ for some $s_{1}$
is open, as is the set of $c$ such that $v\left(  s_{2}\right)  >1$ for some
$s_{2}$ and $v>0$ on $(-\infty,s_{2}]$ . This follows because $p_{c}$ is a
continuous function of $c.$ Lemmas \ref{lem13a} and \ref{lem13} imply that
these sets are nonempty, and their definitions and Proposition \ref{prop1}
imply that they are disjoint. Since the interval $\left(  0,\infty\right)  $
is connected, the existence of some positive $c_{0}^{\ast}$ which is not in
either set.\ \ Its uniqueness follows from the Corollary to Lemma 12.

From the definition of $c_{0}^{\ast},$ $w_{c_{0}^{\ast}}\geq0$ on $R.$ Suppose
that there is an $s$ with $w_{c_{0}^{\ast}}\left(  s\right)  =0$ and
$w_{c_{0}^{\ast}}\left(  t\right)  >0$ on $\left(  -\infty,s\right)  .$ Then
$w_{c_{0}^{\ast}}^{\prime}\left(  s\right)  =0,$ $w_{c_{0}^{\ast}}%
^{\prime\prime}\left(  s\right)  \geq0$ and
\[
w_{c_{0}^{\ast}}^{\prime\prime}\left(  s\right)  =-b^{2}q_{0}S^{\prime}\left(
u_{c_{0}^{\ast}}\left(  s\right)  \right)  u_{c_{0}^{\ast}}^{\prime}\left(
s\right)  .
\]
Because $v_{c_{0}^{\ast}}^{\prime}>0$ on $\left(  -\infty,s\right)  ,$
$u_{c_{0}^{\ast}}^{\prime}\left(  s\right)  >0,$ giving $w_{c_{0}^{\ast}%
}^{\prime\prime}\left(  s\right)  <0.$ This contradiction completes the proof
of Lemma \ref{lem1}. \ 
\end{proof}

To complete Lemma \ref{lem2} we must prove the assertions in the third and
fourth sentences. \ In the third sentence,
\[
u^{\prime\prime}=\frac{v^{\prime}-u^{\prime}}{c}=\frac{w}{c}%
\]
when $u^{\prime}=0,$ and at the first zero of $u^{\prime},$ $w<0,$ so
$u^{\prime\prime}<0.$ The implicit function theorem and the comparison
(\ref{a1a}) imply the limit statement.

For the last sentence of Lemma \ref{lem2}, it suffices to prove that
$u_{0,c}\left(  T\left(  c\right)  \right)  >u_{knee.}$ Suppose instead that
$u_{0,c}\left(  T\left(  c\right)  \right)  \leq u_{knee.}$ Since $c\geq
c_{1},$ Lemma \ref{lem12} and the hypotheses of Lemma \ref{lem2} imply that
\begin{align*}
v_{0,c}\left(  T\left(  c\right)  \right)   &  >v_{0,c_{1}}\left(  T\left(
c_{1}\right)  \right)  >q_{0}S\left(  u_{knee}\right) \\
&  \geq q_{0}S\left(  u_{0,c}\left(  T\left(  c\right)  \right)  \right)  .
\end{align*}
Hence $w^{\prime}\left(  T\left(  c\right)  \right)  >0,$ which contradicts
the definition of $T\left(  c\right)  $. This completes the proof of Lemma
\ref{lem2}.

\subsection{Proof of Lemma \ref{lem10}}

\begin{proof}
This result is about system (\ref{1}). However the argument in Lemma
\ref{lem14}, initially about system (\ref{2}), applies equally well to
(\ref{1}), so if $w_{c}$ increases monotonically to above $\bar{w}$ then $v$
crosses $1,$ followed by $u.$ Hence we can assume that if $s_{1}$ is the first
zero, if any, of $w_{c}$, then $w_{c}<\bar{w}$ on $(0,s_{1})$. As earlier in
obtaining (\ref{13}), it follows that if $p=p_{c}$, then
\begin{equation}
v>u>v-c\bar{w} \label{2b}%
\end{equation}
on $(0,s_{1})$. Therefore,
\begin{equation}
\lim_{c\rightarrow0^{+}}\left(  u-v\right)  =0 \label{x1}%
\end{equation}
uniformly on $(0,s_{1}]$ and for $\varepsilon>0$.

Also,
\begin{equation}
\lim_{\frac{\varepsilon}{c}\rightarrow\infty}\left(  q-\frac{1}{1+\beta
S\left(  u\right)  }\right)  =0, \label{x2}%
\end{equation}
uniformly on $(-\infty,s_{1}]$ and for $c>0,$ $\varepsilon>0$. This is proved
by the same argument which lead to (\ref{x1}), .

Now consider the equation obtained from (\ref{1}) by formally setting $c=0$ in
(\ref{1}), namely
\begin{equation}
v^{\prime\prime}=b^{2}Z\left(  v\right)  , \label{2a}%
\end{equation}
where
\[
Z\left(  v\right)  =v-\frac{S\left(  v\right)  }{1+\beta S\left(  v\right)
}.
\]

Because (\ref{1}) has only one equilibrium point, $Z\left(  v\right)  >0$ if
$v>0.$ Hence there is a $\hat{t}>0$ such that if for some $t,$ $u_{m}\leq
v\left(  t\right)  <1$ and $v^{\prime}\left(  t\right)  >0$, then $v\left(
t+\hat{t}\right)  >1.$ (Here $\hat{t}$ is independent of the particular
solution involved.) Lemma \ref{lem10} then follows from (\ref{x1}) and
(\ref{x2}).
\end{proof}

\section{Proof of Lemma \ref{lem2b}}

\begin{proof}
Suppose that the linearization of (\ref{1}) around $p_{0}$ is $P^{\prime}=BP.$
Then
\begin{equation}
B=\left(
\begin{array}
[c]{rrrr}%
-\frac{1}{c} & \frac{1}{c} & 0 & 0\\
0 & 0 & 1 & 0\\
-b^{2}q_{0}S^{\prime}\left(  u_{0}\right)  & b^{2} & 0 & -b^{2}S\left(
u_{0}\right) \\
-\frac{\varepsilon}{c}\beta q_{0}S^{\prime}\left(  u_{0}\right)  & 0 & 0 &
-\frac{\varepsilon}{c}\left(  1+\beta S\left(  u_{0}\right)  \right)
\end{array}
\right)  . \label{16}%
\end{equation}
The characteristic polynomial of $B$ is%
\begin{align}
&  g(X)=X^{4}+\frac{1}{c}\left(  1+\varepsilon\left(  \beta S\left(
u_{0}\right)  +1\right)  \right)  X^{3}+\left(  -b^{2}+\frac{1}{c^{2}%
}\varepsilon\left(  \beta S\left(  u_{0}\right)  +1\right)  \right)
\allowbreak X^{2}\\
&  +\frac{b^{2}}{c}\left(  q_{0}S^{\prime}\left(  u_{0}\right)  -1-\varepsilon
\left(  \beta S\left(  u_{0}\right)  +1\right)  \right)  X\nonumber\\
&  +\frac{b^{2}}{c^{2}}\varepsilon\left(  q_{0}S^{\prime}\left(  u_{0}\right)
-1-\beta S\left(  u_{0}\right)  \right) \nonumber
\end{align}

While proving Lemma \ref{lem1} we showed that if $\varepsilon=0,$ then one of
the non-zero eigenvalues of $B$ is positive and two are real and negative. We
also saw that $q_{0}S^{\prime}\left(  u_{0}\right)  <1,$ and therefore, $\det
B\ <0$ if $\varepsilon>0.$ Since the trace of $B$ is also negative, if
$\varepsilon>0$ then $B$ has either one or three eigenvalues with negative
real part, and for sufficiently small $\frac{\varepsilon}{c}$ it has three,
all of which are real. In fact, since $g\left(  0\right)  <0$, $g^{\prime
}\left(  0\right)  <0$, and $g^{\prime\prime\prime}\left(  X\right)  >0$ if
$X>0,$ $B$ has exactly one real positive eigenvalue for every $\left(
\varepsilon,c\right)  $ in the positive quadrant $\varepsilon>0,c>0.$ For each
$c>0,$ as $\varepsilon$ increases the other roots of $g$ remain in the left
hand plane unless, for some $\varepsilon,$ two of them are pure imaginary.
\ Consideration of the characteristic polynomial in this case (one negative,
one positive, and two pure imaginary roots) shows that the coefficients of $X$
and $X^{3}$ have the same sign. \ This is not the case with $g,$ because the
coefficient of $X^{3}$ is positive and the coefficient of $X$ is negative.

Hence, as asserted in Lemma \ref{lem2b}, the unstable manifold $\mathcal{U}%
_{\varepsilon,c}$ of (\ref{1}) at $p_{0}$ is one dimensional. Further, because
$q_{0}S^{\prime}\left(  u_{0}\right)  <1,$ it follows from (\ref{16}) that if
$\mathbf{\mu=}\left(  \mu_{1},\mu_{2},\mu_{3},\mu_{4}\right)  $ is the unit
eigenvector of $B$ with $\mu_{1}>0,$ then $\mu_{2}>0,$ and $\mu_{3}>0.$ Also
(\ref{16}) implies that if $\varepsilon=0$ then $\mu_{4}=0$ and if
$\varepsilon>0$ then $\mu_{4}<0.$ The claimed behavior for large negative $t$
of solutions on $\mathcal{U}_{\varepsilon,c}$ follows. The continuity of
$\mathcal{U}_{\varepsilon,c}$ for $\varepsilon\geq0$ follows from Theorem 6.1
in chapter 6 in the text of Hartman, \cite{hartman}. \footnote{Our terminology
is different from Hartman's because we define stable and unstable manifolds
even when $\varepsilon=0.$ In this case the unstable manifold $\mathcal{U}%
_{\varepsilon,c}^{+},$ all we need,\ is the set of all solutions $p\left(
t\right)  $ which tend to $p_{0}$ at an exponential rate as $t\rightarrow
-\infty.$ To obtain the desired continuity of $\mathcal{U}_{\varepsilon,c}%
^{+}$ with respect to $\varepsilon$ and $c,$ apply Hartman's theorem to
(\ref{2}) augmented with equations $\varepsilon^{\prime}=0$ \ and $c^{\prime
}=0.$ This is the closest we come to center manifolds in our approach.}

The final assertion of the lemma, that $\lambda_{1}\left(  c,\varepsilon
\right)  >\lambda_{1}\left(  c,0\right)  $ if $c>0$ and $\varepsilon>0$
follows by writing the characteristic polynomial of $B$ in the form
\[
g\left(  x\right)  =Xf\left(  X\right)  +\frac{\varepsilon}{c}\left(  1+\beta
S\left(  u_{0}\right)  \right)  f\left(  X\right)  -\frac{\varepsilon}{c}%
b^{2}S\left(  u_{0}\right)  \beta S^{\prime}\left(  u_{0}\right)  q_{0}.
\]
We see that since $f(\lambda_{1}\left(  c,0\right)  )=0$, $g(\lambda
_{1}\left(  c,0\right)  )<0.$ Hence $\lambda_{1}\left(  c,\varepsilon\right)
>\lambda_{1}\left(  c,0\right)  ,$ completing the proof of Lemma
\ \ \ref{lem2b}.
\end{proof}


\begin{thebibliography}{99}                                                                                               %


\bibitem {mayer}Alefeld, G. and Mayer, G., Interval analysis, theory and
applications, \textit{J. Comp. Appld. Math }\textbf{121} (2000), 421-464.

\bibitem {dumortier}Dumortier, F., Llibre, C., and Art\'{e}s, C.
\textit{Qualitative behavior of planar systems}, Springer, 2006.

\bibitem {emc}Ermentrout, G. B. and McLeod, J. B., Existence and uniqueness of
waves for a neural network, \textit{Proc. Roy. Soc. Edinburgh Sect. A}
\textbf{123}, 451-478.

\bibitem {faye}Faye,G, Existence and stability of travelling pulses in a
neural field equation with synaptic depression, \textit{Siam J. of Dynamical
Systems } \textbf{10}(2013), 147-160.

\bibitem {fs}Faye, G. and Scheel, A., Existence of pulses in excitable media
with nonlocal coupling, \textit{Advances in Mathematics }\textbf{270 }2015, 400-456.

\bibitem {fenichel}Fenichel, N., Geometric perturbation theory for ordinary
differential equations, \textit{J. Diff. Eqns} \textbf{31} (1979), 53-98.

\bibitem {hassard}Hassard, B., Hastings, S. P., Troy, W. C., Zhang, J., A
computer proof that the Lorenz equations have \textquotedblleft
chaotic\textquotedblright\ solutions. \textit{Appld. Math. Letters }\textbf{7
}1994, 79-63.

\bibitem {h3}Hastings, S. P. , On wave solutions of the Hodgkin-Huxley
equations, \textit{Arch. Rat. Mech. Anal} \textbf{60} 1972, 229-257.

\bibitem {h1}Hastings, S. P., Single and multiple pulse waves for the
FitzHugh-Nagumo equations, \textit{SIAM J. Applied Math} \textbf{42 }1982, 247-260.

\bibitem {hmc}Hastings, S. P. and McLeod, J. B., \textit{Classical Methods in
Ordinary Differential Equations,} Amer. Math. Soc. 2012.

\bibitem {hartman}Hartman, P., \textit{Ordinary Differential Equations},
Classics in Applied Mathematics, SIAM, 2002.

\bibitem {h}Hodgkin, A.L., Huxley, A.F., A quantitative description of
membrane current and its application to conduction and excitation in nerve,
\textit{Journal of Physiology} \textbf{117} 1952, 500-544.

\bibitem {jones}Jones, C. K. R. T. , Kopell, N., Langer, R., Construction of
the FitzHugh-Nagumo pulse using differential forms, \textit{Patterns and
Dynamics in Reactive Media}, IMA Volumes in Mathematics and its Applications
\textbf{37} (1991), 101-116.

\bibitem {kilpatrickbressloff}Kilpatrick, Z. and Bressloff, P., Effects of
synaptic depression and adaptation on spatio-temporal dynamics of an
excitatory neural network. \textit{Physica D. }\textbf{239} 2010, 547-560.

\bibitem {kss}Krupa, M., Sandstede, B., and Szmolyan, P., Fast and slow waves
in the FitzHugh-Nagumo equations, \textit{J. Diff. Eqns }\textbf{133 }1997, \ 49-97.

\bibitem {pe}Pinto, D., Ermentrout G. B., Spatially structured activity in
synaptically coupled neuronal networks:I Traveling fronts and pulses,
\textit{Siam J. Applied Math} \textbf{62} 2001, 206-225.

\bibitem {rs}Rauch, J., Smoller, J., Qualitative theory of the FitzHugh-Nagumo
equations, \textit{Advances in Mathematics} \textbf{27} 1978, 12-44.
\end{thebibliography}
\end{document}